\theoremstyle{plain}
\newtheorem{theorem}{Theorem}[section]
\newtheorem{lemma}[theorem]{Lemma}
\newtheorem{corol}[theorem]{Corollary}
\newtheorem{proposition}[theorem]{Proposition}
\theoremstyle{remark}
\newtheorem*{remark}{Remark}
\newtheorem{assumption}{Assumption}
\newcommand{\field}[1]{\mathbb{#1}}
\newcommand{\R}{\field{R}}
\newcommand{\E}{\field{E}}
\newcommand{\F}{\mathcal{F}}
\newcommand{\cL}{\mathcal{L}}
\newcommand{\cH}{\mathcal{H}}
\newcommand{\N}{\field{N}}
\newcommand{\Z}{\field{Z}}
\def\cod{\stackrel{\cal D}{\longrightarrow}}
\def\cop{\stackrel{\cal P}{\longrightarrow}}
\def\vline{\hfil\kern\arraycolsep\vline\kern-\arraycolsep\hfilneg}
\def\th{\tilde h}
\def\thl{\th_l}
\def\tsig{\tilde\Sigma}
\def\hsig{\hat\Sigma}
\def\sgn{{\mbox{sgn}}}
\def\cov{{\mbox{cov}}}
\def\var{{\mbox{var}}}
\def\tr{{\mbox{tr}}}
\def\cum{{\mbox{cum}}}
\def\bz{\bm{0}}
\def\deltaeq{\overset{\Delta}{=}}
\def\bl{\bm{l}}
\def\balpha{\bm{\alpha}}
\def\bbeta{\bm{\beta}}
\def\iid{\stackrel{i.i.d.}{\sim}}
\begin{document}
\title{Adaptive Testing for High-Dimensional Data}
\runtitle{Adaptive Testing for High-Dimensional Data}

\begin{frontmatter}

\begin{aug}
\author[A]{\fnms{Yangfan}~\snm{Zhang}\ead[label=e1]{yangfan3@illinois.edu}},
\author[B]{\fnms{Runmin}~\snm{Wang}\ead[label=e2]{runminw@tamu.edu}}
\and
\author[A]{\fnms{Xiaofeng}~\snm{Shao}\ead[label=e3]{xshao@illinois.edu}}
\address[A]{Department of Statistics,
University of Illinois at Urbana Champaign\printead[presep={,\ }]{e1,e3}}

\address[B]{Department of Statistics,
Texas A\&M University\printead[presep={,\ }]{e2}}
\end{aug}

\begin{abstract}
In this article, we propose a class of $L_q$-norm based U-statistics for a family of global testing problems related to high-dimensional data. This includes testing of mean vector and its spatial sign,  simultaneous testing of linear model coefficients, and testing of component-wise independence for high-dimensional observations, among others. Under the null hypothesis, we derive asymptotic normality and independence between $L_q$-norm based U-statistics for several $q$s  under mild moment and cumulant conditions. A simple combination of two studentized $L_q$-based test statistics via their $p$-values is proposed and is shown to attain great power against alternatives of different sparsity. Our work is a substantial extension of \cite{he2021asymptotically}, which is mostly focused on mean and covariance testing, and we manage to provide a general treatment of asymptotic independence of $L_q$-norm based U-statistics for a wide class of kernels. To alleviate the computation burden, we introduce a variant of the proposed U-statistics by using the monotone indices in the summation, resulting in a U-statistic with asymmetric kernel. A dynamic programming method is introduced to reduce the computational cost from $O(n^{qr})$, which is required for the calculation of the full U-statistic, to $O(n^r)$ where $r$ is the order of the kernel. Numerical studies further corroborate the advantage of the proposed adaptive test as compared to some existing competitors. 
\end{abstract}
\begin{keyword}
\kwd{Independence testing}
\kwd{Simultaneous testing}
\kwd{Spatial sign}
\kwd{U-statistics}
\end{keyword}
\end{frontmatter}

\section{Introduction}
High-dimensional data, where the dimension $p$ could be comparable to or exceeds the sample size $n$,   are ubiquitous and are encountered on a regular basis in many scientific fields. In practice, it is often of interest to
test  some overall patterns of low-dimensional features of high-dimensional data. This includes testing of mean vectors \citep{chenqin2010,cai2014two,wang2015high,he2021asymptotically}, covariance matrices \citep{li2012two,he2021asymptotically}, and regression coefficients in the linear models \citep{zhong2011tests}, as well as component-wise independence in high-dimensional observations \citep{han2017distribution,leung2018testing}.

These problems can be formulated as $H_0:\Theta=\bm{0}$, where $\bm{0}$ is a vector of all zeros,
$\Theta=\{\theta_l, l\in {\cal L}\}$ is a high-dimensional vector with ${\cal L}$ being the index set, and $\{\theta_l\}$ being the corresponding parameters of interest. For this type of global testing problem, many methods have been developed  in the literature, and there are two dominating brands. One is the $L_2$ norm based test with sum-of-squares-type statistics, which is widely used in the context of mean testing \citep{bai1996effect,chenqin2010,goeman2006testing,gregory2015two,srivastava2008test,srivastava2016raptt}, covariance testing \citep{bai2009corrections,chen2010tests,ledoit2002some,li2012two}, and other testing problems such as component-wise independence test \citep{leung2018testing}. The other is the $L_{\infty}$ norm based test with maximum-type statistics. See examples in mean testing \citep{cai2014two,hall2010innovated}, covariance testing \citep{cai2011limiting,cai2013two,jiang2004asymptotic,liu2008asymptotic,shao2014necessary} and component-wise independence testing \citep{han2017distribution,drton2020high}. It is well known that sum-of-squares-type statistics are powerful against dense alternative, where $\Theta$ has a large proportion of nonzero elements with large $\|\Theta\|_2$, whereas maximum-type statistics target at sparse alternative, where $\Theta$ has few nonzero elements with a large $\|\Theta\|_{\infty}$. In practice, it is often unrealistic to assume a particular type of alternative and there is little knowledge about the type of alternative if any.  Thus there is a need to develop new test that can be adaptive to different types of alternatives, and have good power against a broad range of alternatives, including
moderately sparse and moderately dense ones. 

In addition to $L_2$ and $L_\infty$ type statistics, \cite{xu2016adaptive} and \cite{wu2019adaptive} developed a family of von Mises V-statistics of $L_q$-type for testing $\|\Theta\|_q^q$ for $q=2,3,\cdots$, which are usually biased estimators. In a recent paper, \cite{he2021asymptotically} proposed a U-statistics framework and constructed unbiased and asymptotically independent estimators of $\|\Theta\|_q^q$ .
Then they combine the $L_q$-norm based test statistics, including the cases $q=2,6,\infty$,   by aggregating the individual $p$-values. The resulting test is adaptive in the sense that it can capture a wide range of alternatives. However, the U-statistics considered in \cite{he2021asymptotically} mainly focus on the mean and covariance testing problem and its corresponding kernel function is of order 1. It is natural to ask whether the asymptotic normality and independence for $L_q$-norm based U-statistics hold for more general kernels and can go beyond the mean and covariance testing problem. 

In this article, we aim to advance the $L_q$-norm based tests and their combination to a broader set of testing problems.
In particular, our framework provides a more general treatment and includes U-statistic of order larger than 1, which naturally occurs in problems including testing the nullity of linear regression coefficients [\cite{zhong2011tests}], and component-wise independence testing based on Kendall's $\tau$ and Spearman's $\rho$ \citep{leung2018testing}. In addition, our framework allows us to extend the spatial sign-based test developed in \cite{wang2015high} for $L_2$ to its $L_q$ counterpart, which seems not included in \cite{he2021asymptotically}. Our asymptotic theory covers some of those developed in \cite{he2021asymptotically} as special cases, and further broadens the applicability of the $L_q$ norm based tests and their combination. In our theory development, we also derive a new Hoeffding decomposition that is finer than the classical one, by taking into account the special structure of the kernel functions. We note that the new decomposition holds for a wider class of kernels with tensor product structures and can be of broad interest.

Computationally speaking, the calculation of full U-statistics for $\|\Theta\|_q^q$, $q=4,6,\cdots$ is very challenging. He et al. (2021) developed a computationally efficient algorithm to make the computation feasible by taking advantage of a recursive structure in the statistics. However, their algorithm seems only applicable to the kernel of order 1, and an extension to the higher order case is difficult. To this end, we propose a family of U-statistics with
monotone index in the summation to ease the computational burden of calculating the full U-statistics. They are
essentially U-statistics with asymmetric kernels, which can be computed much faster than the full U-statistics via dynamic programming. While we pay a price in terms of  a constant loss of statistical efficiency, the new U-statistics with asymmetric kernels can be calculated efficiently in $O(n^r)$ time, as opposed to $O(n^{qr})$ for the brute-force approach,  where $r$ equals the order of the kernel function.

The rest of the paper is organized as follows. In Section \ref{sec::background}, we introduce the problem settings and propose the family of $L_q$-type U-statistics for both one-sample and two-sample testing problems. We also include several motivating examples where the proposed statistics can be applied. Section \ref{sec::theory} includes the main theory, such as  asymptotic normality and independence of the proposed U-statistics with different $q$'s under the null. The power analysis under the alternative is also included. In Section \ref{sec::dpustat}, we propose a variant of U-statistic with asymmetric kernels, for which dynamic programming can be applied to compute  the statistic efficiently. Section \ref{sec::sim} includes all simulation results, and Section \ref{sec:data} includes a real data application. We conclude the paper in Section \ref{sec:conclude}. All technical  proofs are gathered in the supplement.

\textbf{Notations.} We use capital letters (e.g. $X_i$) to denote random vectors, and corresponding lowercase ones (e.g. $x_{ij}$) to denote their components. We use $\sum^*$ to denote the summation over all mutually distinct indices.  For two functions $f(X_1,\ldots,X_r)$ and $g(Y_1,\ldots,Y_s)$, we define their tensor product $f\otimes g$ as functions
$$
(f\otimes g)(X_1,\ldots,X_r;Y_1,\ldots,Y_s)=f(X_1,\ldots,X_r)g(Y_1,\ldots,Y_s).
$$
We define $(\otimes)^s f=f\otimes [(\otimes)^{s-1}f]$ for $s\ge 2$ recursively. 
We denote $\{1,\ldots,n\}$ as $[n]$, and $\{m,m+1,\ldots, n\}$ as $[m,n]$. We use $I$ to denote a sequence of (not necessarily distinct) indices in $[n]$, $(i_1,i_2,\ldots,i_m)$, and we denote $(X_{i_1},X_{i_2},\ldots,X_{i_m})$ as $X_I$. In particular, we have $X_{[c]}=(X_1,\ldots,X_c)$. We define $P_m(n)$ as the collection of all subsets of $[n]$ with cardinality $m$, i.e.
$$
P_m(n)=\{\{i_1,i_2,\ldots,i_m\}\subseteq [n]: \text{ All indices are distinct with each other}\}.
$$
For $f(n)$ and $g(n)$, we write $f\asymp g$ if $f(n)/g(n) \rightarrow 1$ as $n\rightarrow\infty$.
Throughout the paper, we define $\|X\|_q^q=\sum_{l=1}^px_l^q$ for any vector $X=(x_1,\ldots,x_p)\in\R^p$ and $q\in \Z_+$. Note that we did not take the absolute value of $x_l$ and hence $\|\cdot\|_q$ is not equal to the classical $l_q$ norm for odd $q$. We use $e_t(r)$ to denote a vector in $\R^r$ with $t$-th element being 1 and others 0.

\section{Background}
\label{sec::background}
\subsection{One-Sample Test}
Suppose we observe some i.i.d. (independent and identically distributed) samples  $X_1,\ldots,X_n\in\R^p$ from some underlying distribution and $p$ is comparable to or exceeds sample size $n$. The goal is to test some high-dimensional parameters $\Theta=\Theta(X_i)=\{\theta_l:l\in\cL\}=\bz$. We can construct a family of unbiased U-statistics for $\|\Theta\|_q^q=\sum_{l\in\cL}\theta_l^q$. For each $l$, we start with some kernel functions $h_l$ of order $r$ satisfying $\E[h_l(X_{[r]})]=\theta_l$. We can then introduce a new kernel $\otimes^q h_l$ with order $qr$, whose expectation equals $\theta_l^q$. Recall that
$$\otimes ^q h_l(X_{[qr]})=h_l(X_{[r]})h_l(X_{[r+1,2r]})\cdots h_l(X_{[(q-1)r+1,qr]}).$$
Let $P^n_m=\frac{n!}{(n-m)!}$. Then we can define the associated U-statistic
$$
U_{n,q}=\sum_{l\in\cL}(P^n_{qr})^{-1}\sum^*_{1\le i_1,\ldots,i_{qr}\le n}\prod_{c=1}^q h_l(X_{i_{(c-1)r+1}},\ldots,X_{i_{cr}}).
$$
In the definition of $L_q$ norm above, we do not take the absolute value (in contrast to classical definition), due to the construction of the U-statistic. In the extreme case, where the positive components cancel out with the negative ones, the $L_q$ norm $\|\Theta\|_q^q$ can be (nearly) zero for odd $q$, even if $\Theta$ is not a zero vector. Therefore, we will mainly consider even $q$ in this paper, i.e. $q\in2 \Z_+$, although our theoretical results can be extended to general $q\in \Z_+$. 

Note that the one-sample mean testing problem studied in \cite{he2021asymptotically} is included in the proposed framework. Specifically,  $\cH_0: \E[X_i]= \mu_0=(\mu_{0,1},\ldots,\mu_{0,p}$), the family of U-statistics proposed in \cite{he2021asymptotically} can be recovered by taking $\cL=[p]$, order $r=1$ and $h_l(X_i)=x_{i,l}-\mu_{0,l}$. We have $\E[U_{n,q}]=\|\E[X_1]-\mu_0\|_q^q$. For mean-testing statistics, the asymptotic independence for $\{U_{n,q}\}_{q\in Z^+}$ is derived from asymptotic joint normality along with $\cov(U_{n,q_1},U_{n,q_2})=0$ for $q_1\not=q_2$. For illustration, consider the example where $q_1>q_2$. For any summand $X_{i_1}\cdots X_{i_{q_1}}$ in $U_{n,q_1}$, and $X_{j_1}\cdots X_{j_{q_2}}$ in $U_{n,q_2}$, there must be at least one $c$ such that $i_c$ is not in the set $\{j_1,\ldots,j_{q_2}\}$ and hence their covariance will be 0. This argument highly relies on the fact that the kernel function has order 1. For kernels with higher order, we cannot expect the covariance to be exactly 0, but we may still show the asymptotic negligibility of the covariance  (after proper scaling) and hence asymptotic independence between these U-statistics for different $q$'s, by imposing some regularity assumptions.

\subsection{Two-Sample Test}
Suppose we observe two i.i.d. samples $X_1,\ldots,X_n\in\R^p$ and $Y_1,\ldots,Y_m\in\R^p$. And we want to test some high-dimensional parameter $\Theta=\Theta(X_i,Y_i)=\{\theta_l:l\in\cL\}=\bz$. The approach is similar to one-sample test, as we are now interested in $\|\Theta\|_q^q$ and we want to test if $\|\Theta\|_q^q=0$.

 For each $l\in\cL$, we can construct a family of two-sample U-statistic based on some kernel function $h_l(X_{[r]};Y_{[r]})$ of order $(r,r)$ satisfying $\E[h_l(X_{[r]};Y_{[r]})]=\theta_l$. Similar to one-sample testing, we may consider $\otimes^qh_l$ with order $(qr,qr)$ whose expectation equals $\theta_l^q$, i.e.
$$\otimes^qh_l(X_{[qr]};Y_{[qr]})=h_l(X_{[r]};Y_{[r]})\cdots h_l(X_{[(q-1)r+1,qr]};Y_{[(q-1)r+1,qr]}).$$

We can then define the associated U-statistic
\begin{align*}
    U_{n,m,q}=&\sum_{l\in\cL}(P^n_{qr}P^m_{qr})^{-1}
    \sum^*_{1\le i_1,\ldots,i_{qr}\le n}\sum^*_{1\le j_1,\ldots,j_{qr}\le m}\\
    &\prod_{c=1}^q
    h_l(X_{i_{(c-1)r+1}},\ldots,X_{i_{cr}};Y_{i_{(c-1)r+1}},\ldots,Y_{i_{cr}}).
\end{align*}

As is the case for one-sample testing problem, the family of statistics for two-sample mean testing in \cite{he2021asymptotically} is also a special case of the above two-sample U-statistics.

\subsection{Motivating Examples}
\label{sec::eg}
In this section, we present some examples that are included in the above formulation. We mainly focus on the examples in one-sample testing problems, as their counterparts for two-sample testing can be constructed in a straightforward way.

\begin{itemize}
    \item Mean (zero) testing, $\cH_0:\mu:=\E[X_i]=\bm{0}$ \citep{he2021asymptotically}. In this case, $\cL=[p]$, $r= 1$ and $h_l(X_i)=x_{i,l}$. 
    
    \item Spatial sign based testing \citep{wang2015high}, $\cH_0:\E[X_i\,/\,\|X_i\|\,]=\bm{0}$. We can take $\cL=[p]$, $r= 1$ and $h_l(X_i)=x_{i,l}\,/\,\|X_i\|$.  
     \item Covariance testing for data with zero mean, $\cH_0:\Sigma:=\var(X_i)=\Sigma_0$ for some given $\Sigma_0=(\sigma_{p_1p_2})_{p_1,p_2=1}^p$.  In this case, $\cL=\{l=(p_1,p_2),1\le p_1\le p_2\le p\},r= 1$ and $h_l(X_i)=x_{ip_1}x_{ip_2}-\sigma_{p_1p_2}$. In particular, to test if the covariance has banded structure, i.e. $\sigma_{p_1,p_2}=0,|p_1-p_2|\ge d$, we can take $\cL=\{l=(p_1,p_2)\in[p]^2, p_2\ge p_1+d\},r= 1$ and $h_l(X_i)=x_{ip_1}x_{ip_2}$. This example is also studied in \cite{he2021asymptotically}.

\item Component-wise independence testing. $\cH_0:x_{i,p_1}$ and $x_{i,p_2}$ are independent for any $p_1\not=p_2$. For Kendall's $\tau$ based test \citep{leung2018testing}, we essentially test $\cH_0^\tau:P(x_{1,p_1}<x_{2,p_1},x_{1,p_2}<x_{2,p_2})=1/4$ for any $p_1\not=p_2$, against $\cH_1^\tau$: negation of $\cH_0^\tau$. Let $\cL=\{l=(p_1,p_2),1\le p_1< p_2\le p\},r= 2$ and $h_l(X_i,X_j)=\sgn(x_{i,p_1}-x_{j,p_1})\sgn(x_{i,p_2}-x_{j,p_2})$. 

\item Component-wise independence testing with Spearman's $\rho$ \citep{leung2018testing}. In this case, $\cH_0^\rho:P(x_{1,p_1}<x_{2,p_1},x_{1,p_2}<x_{3,p_2})=1/4$ for any $p_1\not=p_2$, and $\cH_1^\rho$ is the negation of $\cH_0^\rho$. We take $\cL=\{l=(p_1,p_2),1\le p_1< p_2\le p\}$, $r=3$ and $h_l(X_i,X_j,X_k)$ to be the symmetric version of $\sgn(x_{i,p_1}-x_{j,p_1})\sgn(x_{i,p_2}-x_{k,p_2})$, i.e.
\begin{align*}
h_l(X_i,X_j,X_k)=&\frac{1}{6}\big[\sgn(x_{i,p_1}-x_{j,p_1})\sgn(x_{i,p_2}-x_{k,p_2})+\sgn(x_{i,p_1}-x_{k,p_1})\sgn(x_{i,p_2}-x_{j,p_2})\\
&+\sgn(x_{j,p_1}-x_{i,p_1})\sgn(x_{j,p_2}-x_{k,p_2})+\sgn(x_{j,p_1}-x_{k,p_1})\sgn(x_{j,p_2}-x_{i,p_2})\\
&+\sgn(x_{k,p_1}-x_{i,p_1})\sgn(x_{k,p_2}-x_{j,p_2})+\sgn(x_{k,p_1}-x_{j,p_1})\sgn(x_{k,p_2}-x_{i,p_2})\big].
\end{align*}
\item Simultaneous test of linear regression coefficients \citep{zhong2011tests}. For linear model $Y=X\bbeta+\varepsilon$, we test $\cH_0:\bbeta=\bbeta_0$ against $\cH_a:\bbeta\not=\bbeta_0$. We take $\cL=[p],r=2$, and
\begin{equation}
h_l\big((X_1,Y_1),(X_2,Y_2)\big)=[\,(X_1-X_2)(Y_1-Y_2-(X_1-X_2)^T\bbeta_0)\,\big]_l/2,
\label{lrhl}
\end{equation}
and the parameter of interest equals $\Theta=\Sigma(\bbeta-\bbeta_0)$ with $\Sigma:=\var(X)$.

\item Two-sample spatial sign test \citep{chakraborty2017tests}, $\cH_0:\Theta=\bm{0}$ with $\Theta=\E[\,(X_i-Y_j)\,/\,\|X_i-Y_j\|\,]$. We take $\cL=[p]$, $r=1$ and two-sample kernel
$$
h_l(X_i,Y_j)=(x_{i,l}-y_{j,l})/\,\|X_i-Y_j\|.
$$

\end{itemize}

\section{Theoretical Analysis}
\label{sec::theory}
This section includes the asymptotic theory for the proposed statistics. We start with kernels of order one and then move onto kernels of higher order.

\subsection{Asymptotic Theory for Order-One Kernels}
\label{sec::cltr1}
For kernels of order $r=1$, we have
$$
U_{n,q}=\sum_{l\in\cL}(P^n_{q})^{-1}\sum^*_{1\le i_1,\ldots,i_{q}\le n}\prod_{c=1}^q h_l(X_{i_c}).
$$
Write $\cL=[L]$ and define $Y_i=(y_{i,1},\ldots,y_{i,L})^T=(h_1(X_i),\ldots,h_L(X_i))^T\deltaeq H(X_i)$, with $\Sigma=(\sigma_{l_1,l_2})_{l_1,l_2\in\cL}=\var(Y_i)$. In this way, we can view $U_{n,q}$ as the U-statistic for testing $\E[Y_i]\equiv \bm{0}$. The analysis for mean-testing would directly apply.
For example, we can calculate the variance as 
\begin{align*}
\var(U_{n,q})=&(P^n_{q})^{-2}\sum_{l_1,l_2\in\cL}\sum^*_{1\le i_1,\ldots,i_{q}\le n}\sum^*_{1\le j_1,\ldots,j_{q}\le n}\E\left[\,\prod_{c=1}^q y_{i_c,l_1} y_{j_c,l_2}\right]\\
=&(P^n_{q})^{-2}q!\sum_{l_1,l_2\in\cL}\sum^*_{1\le i_1,\ldots,i_{q}\le n}\prod_{c=1}^q\E[y_{i_c,l_1} y_{i_c,l_2}]\\
\asymp &q!n^{-q}\sum_{l_1,l_2\in\cL}\sigma_{l_1,l_2}^q=q!n^{-q}\|\Sigma\|_q^q.    
\end{align*}
For $q_1\not=q_2$, we have 
\begin{align*}
\cov(U_{n,q_1},U_{n,q_2})=&(P^n_{q_1})^{-1}(P^n_{q_2})^{-1}\sum_{l_1,l_2\in\cL}\sum^*_{1\le i_1,\ldots,i_{q_1}\le n}\sum^*_{1\le j_1,\ldots,j_{q_2}\le n}\prod_{c_1=1}^{q_1}\prod_{c_2=1}^{q_2}\E[ y_{i_{c_1},l_1}  y_{j_{c_2},l_2}]=0,
\end{align*}
since there must be at least one $i_{c_1}$ or $j_{c_2}$ that is distinct from any other index when $q_1\not= q_2$. 

The analysis in \cite{he2021asymptotically} can be applied directly, but instead we shall follow the argument used in \cite{zhang2021adaptive} and impose the following weaker moment conditions and some cumulant assumptions.
\begin{assumption}[$r=1$]
Suppose $Y_1,\ldots,Y_n$ are i.i.d. copies of $Y_0$ with  variance matrix $\Sigma$, and the following conditions hold.
\begin{enumerate}
    \item There exists constants $C_p>0$ depending on $p$ such that $\tr(\Sigma) \geq C_pL.$
    \item $Y_0$ has up to fourth moment with $\sup_{l\in\cL}\E[y_{0,l}^4]<C$, and for $u=2,3,4$ 
    we have $$\left|\operatorname{cum}\left(y_{0, l_{1}}, \ldots, y_{0, l_u}\right)\right| \leq C_p^{u/2}\left(1 \vee \max _{1 \leq i, j \leq u}\left|l_{i}-l_{j}\right|\right)^{-\kappa},\quad\text{ for some }\kappa>2.$$
\end{enumerate}
\label{ass::cumr}
\end{assumption}
The above assumptions are slightly weaker than those in \cite{zhang2021adaptive} but sufficient for deriving the asymptotic distributions as presented in the following theorem, since we do not need to show the process convergence, as required in \cite{zhang2021adaptive}. The above cumulant assumption is implied by geometric moment contraction [cf. Proposition 2 of \cite{wu2004limit}] or physical dependence measure proposed by \cite{wu2005nonlinear} [cf. Section 4 of \cite{shao2007local}], or $\alpha-$mixing (with polynomial decaying rate for mixing coefficients)  [\cite{andrews1991heteroskedasticity}, \cite{zhurbenko1975higher}] in the time series setting. It basically imposes weak dependence among the $p$ components in the data. Our theory does not require an ordering of components and holds as long as a permutation of $p$ components satisfies the cumulant assumption, as our test statistic is invariant to the permutation of components. 
\begin{theorem}[$r=1$]
Suppose Assumption \ref{ass::cumr} holds. Under the null, we have for any $q\in2\Z_+$,
$$
(q!)^{-1/2}n^{q/2}\|\Sigma\|_q^{-q/2}U_{n,q}\cod N(0,1).
$$
Furthermore, for any finite set $I\subseteq 2\Z_+$, $(U_{n,q})_{q\in I}$ are asymptotically jointly independent.
\label{thm::mainr1}
\end{theorem}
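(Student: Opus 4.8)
The plan is to recognize $U_{n,q}$ as a completely degenerate U-statistic of fixed order $q$ whose kernel depends on the diverging dimension, prove marginal asymptotic normality by a martingale central limit theorem for triangular arrays, and obtain joint independence from the Cram\'er--Wold device together with the exact vanishing of the cross-covariances. Concretely, passing to $Y_i=H(X_i)$ we have $U_{n,q}=\binom{n}{q}^{-1}\sum_{S\in P_q(n)}g_q(Y_S)$ with the symmetric kernel $g_q(Y_{i_1},\ldots,Y_{i_q})=\sum_{l\in\cL}\prod_{c=1}^q y_{i_c,l}$, using symmetry to turn $(P^n_q)^{-1}\sum^*$ into the average over $q$-subsets. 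Under $H_0$, $\E[Y_i]=\bz$, so the conditional expectation of $g_q$ given any $<q$ of its arguments is $0$; hence every Hoeffding projection of order below $q$ vanishes and $U_{n,q}$ equals its own top-order component. With $\F_k=\sigma(Y_1,\ldots,Y_k)$, complete degeneracy yields $\E[U_{n,q}\mid\F_k]=\binom{n}{q}^{-1}\sum_{S\in P_q(k)}g_q(Y_S)$, whence $U_{n,q}=\sum_{k=q}^n D_{n,k}$ with $D_{n,k}=\binom{n}{q}^{-1}\sum_{T\in P_{q-1}(k-1)}g_q(Y_T,Y_k)=Y_k^T W_{k-1}$, where $W_{k-1}\in\R^L$ has $l$-th coordinate $\binom{n}{q}^{-1}\sum_{T\in P_{q-1}(k-1)}\prod_{i\in T}y_{i,l}$. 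Thus $\{D_{n,k}\}$ is a martingale difference array for $\{\F_k\}$ with $\E[D_{n,k}^2\mid\F_{k-1}]=W_{k-1}^T\Sigma W_{k-1}$.

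Next I would verify the usual conditions of the martingale CLT: (i) the conditional variance $V_n:=\sum_{k=q}^n W_{k-1}^T\Sigma W_{k-1}$ converges to $\var(U_{n,q})$ in probability, and (ii) a Lyapunov/conditional-Lindeberg bound such as $\sum_{k=q}^n\E[D_{n,k}^4]=o(\var(U_{n,q})^2)$. Since martingale differences are orthogonal, $\E[V_n]=\var(U_{n,q})\asymp q!\,n^{-q}\|\Sigma\|_q^q$ (the computation already displayed in the text), so (i) reduces to $\var(V_n)=o(\var(U_{n,q})^2)$. Expanding $\var(V_n)$ and $\E[D_{n,k}^4]$ produces sums over sample-index coincidence patterns and over coordinate indices $l_1,l_2,\ldots\in\cL$; the boundedness $\sup_l\E[y_{0,l}^4]<C$, the bound $\tr(\Sigma)\ge C_pL$ (which makes $\|\Sigma\|_q^q$ diverge, since $\|\Sigma\|_q^q\ge L^{1-q}\tr(\Sigma)^q$ for even $q$), and above all the cumulant decay $|\cum(y_{0,l_1},\ldots,y_{0,l_u})|\le C_p^{u/2}(1\vee\max_{i,j}|l_i-l_j|)^{-\kappa}$ with $\kappa>2$ are exactly what render these coordinate sums, after normalization by $(\|\Sigma\|_q^q)^2$, negligible. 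This delivers $(q!)^{-1/2}n^{q/2}\|\Sigma\|_q^{-q/2}U_{n,q}\cod N(0,1)$.

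For joint independence, fix a finite $I\subseteq 2\Z_+$ and reals $(a_q)_{q\in I}$, and set $S_n=\sum_{q\in I}a_q(q!)^{-1/2}n^{q/2}\|\Sigma\|_q^{-q/2}U_{n,q}$. Using the martingale decompositions simultaneously, $S_n=\sum_k\big(\sum_{q\in I}a_q(q!)^{-1/2}n^{q/2}\|\Sigma\|_q^{-q/2}D^{(q)}_{n,k}\big)$ is again a martingale difference array whose CLT conditions follow, via the Cauchy--Schwarz inequality applied to the cross terms, from those already established for each $q$; and because $\cov(U_{n,q_1},U_{n,q_2})=0$ for every finite $n$ when $q_1\ne q_2$, $\var(S_n)\to\sum_{q\in I}a_q^2$. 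Hence $S_n\cod N(0,\sum_{q\in I}a_q^2)$, and Cram\'er--Wold upgrades this to joint convergence of $\big((q!)^{-1/2}n^{q/2}\|\Sigma\|_q^{-q/2}U_{n,q}\big)_{q\in I}$ to a vector of independent $N(0,1)$ variables, i.e. the claimed asymptotic independence.

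The main obstacle is condition (i), the concentration of $V_n$ around its mean. This is a genuine fourth-moment computation in which one must organize contributions by the coincidence pattern of the sample indices and then, for the dominant fully-distinct pattern, show that the relevant coordinate sums produced by the cross terms — products of entries of $\Sigma$, and of higher cumulant tensors of $Y_0$, summed over several indices in $\cL$ — are of smaller order than $(\|\Sigma\|_q^q)^2=(\sum_{l_1,l_2}\sigma_{l_1,l_2}^q)^2$; the decay rate $\kappa>2$ is used precisely to bound the spread of the coordinate indices appearing in these sums. The remaining ingredients — the martingale structure, the exact orthogonality of $U_{n,q_1}$ and $U_{n,q_2}$ for distinct $q$, and the Cram\'er--Wold reduction — are essentially bookkeeping.
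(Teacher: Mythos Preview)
Your proposal is correct and follows essentially the same route as the paper. The paper does not give a standalone proof of this theorem; it is recovered as the $r=s=1$ special case of Proposition~\ref{thm::mainr_deg}, whose proof uses exactly the ingredients you list: the martingale-difference representation of the fully degenerate U-statistic, the martingale CLT with a Lyapunov-type fourth-moment bound and conditional-variance convergence driven by the cumulant summability, and the Cram\'er--Wold device together with the exact orthogonality $\cov(U_{n,q_1},U_{n,q_2})=0$ for joint independence.

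One small imprecision: for the joint CLT, the conditional variance of the combined martingale difference contains cross terms $\sum_i \E[D^{(q_1)}_{n,i}D^{(q_2)}_{n,i}\mid\F_{i-1}]$, and their convergence to $0$ in probability is not a consequence of Cauchy--Schwarz plus the marginal conditions alone; it needs its own $L^2$ argument (mean zero by the exact orthogonality, variance $o(1)$ by the same fourth-moment/cumulant bookkeeping as for $V_n$). The paper handles this identically and with the same brevity (``Similarly, we have $\sum_i\E[\xi_{n,i}^{(1)}\xi_{n,i}^{(2)}\mid\F_{i-1}]\cop 0$''), so this is only a wording issue in your sketch, not a gap.
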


For mean and covariance testing, Assumption \ref{ass::cumr} usually holds with $C_p$ being a constant independent of $p$. 
For spatial sign based testing, we have $H(X_i)=X_i/\|X_i\|_2$. In this case, we have $\displaystyle\sum_{l\in\cL}\E[y_{i,l}^2]=\E[\,\|Y_i\|_2^2\,]\equiv 1$, 
so Assumption \ref{ass::cumr} does not hold if $C_p$ is a fixed constant as in \cite{zhang2021adaptive}. To this end, we provide a separate proof to show the CLT for spatial sign-based test statistics; see Appendix \ref{sec:verify_spatial} for details. 
To summarize, for order-one kernels, the proposed U-statistics include those studied in \cite{he2021asymptotically} as a special case, and the asymptotic theory is justified under weaker assumptions (e.g. we only require the existence of fourth-order moment). Moreover, the theory can be extended to spatial sign based test, which is not covered by \cite{he2021asymptotically}. On the other hand, \cite{he2021asymptotically} obtained asymptotic independence among $L_q$-based U-statistics and maxmum-type statistics for mean and covariance testing problems, which is not covered here.

\subsection{Hoeffding Decomposition for Kernels with Higher Order}
For kernels with order $r>1$, the asymptotic theory is more involved. We start with a new Hoeffding decomposition that takes into account the special structure of $U_{n,q}$. Without loss of generality, here we assume $h_l$ is symmetric over its arguments for all $l \in \mathcal{L}$. We also assume $\E[h_l(X_{[r]})]=0$ as we focus on the null.

For $1\le t\le r$, define the projected kernels
\begin{align*}
h_{l,t}(x_1,\ldots,x_t)=&\E[h_l(x_1,\ldots,x_t,X_{t+1},\ldots,X_r)]\\
=&\E[h_l(X_1,\ldots,X_t,X_{t+1},\ldots,X_r)\mid X_1=x_1,\ldots,X_t=x_t].
\end{align*}
We call $h_l$ has degeneracy of order $s\in[1,r]$, if $h_{l,t}\equiv 0$ for any $1\le t<s$ while $h_{l,s}\not=0$. In particular, $h_l$ is called non-degenerate if $s=1<r$; and is fully degenerate if $s=r$. Note that we view $h_l$  with $r=1$ as fully degenerate for convention.

We further define the following (order-$t$ for $1\le t\le r$) Haj\'{e}k projection
\begin{align*}
h_l^{(t)}(x_1,\ldots, x_t)=&\sum_{s=1}^{t}(-1)^{t-s} \times 
 \sum_{1 \leq i_1<\cdots<i_s \leq t} h_{l,s}(x_{i_{1}},\ldots, x_{i_{s}}).
\end{align*}
In particular, we have $h_l^{(1)}=h_{l,1}$. When $r=2$, we have $$h_l^{(2)}(X_1,X_2)=h_l(X_1,X_2)-h_l^{(1)}(X_1)-h_l^{(1)}(X_2),$$ 
which is a fully degenerate kernel. In general, $h_l^{(t)}$ is always fully degenerate with order $t$.

The Haj\'{e}k projection is the cornerstone for deriving the Hoeffding's decomposition. In fact, the classical Hoeffding decomposition is derived based on the fact that a kernel function equals the summation over all Haj\'{e}k projected kernels of different orders. However, instead of directly applying the projection to $\otimes^q h_l$, we write $h_l$ as the summation over the projected kernels $h_l^{(t)}$ and apply the distribution law to decompose $\otimes^q h_l$ into some products of $h_l^{(t)}$. 

For illustration, consider a kernel $h_l$ with order $r=2$. We may write $$h_l(X_1,X_2)=h_l^{(1)}(X_1)+h_l^{(1)}(X_2)+h_l^{(2)}(X_1,X_2).$$
By applying distribution law, we have, e.g., for $q=2$,
\begin{equation}
\begin{aligned}
&(\otimes^2 h_l)(X_1,X_2,X_3,X_4)\\
=&[h_l^{(1)}(X_1)+h_l^{(1)}(X_2)+h_l^{(2)}(X_1,X_2)][h_l^{(1)}(X_3)+h_l^{(1)}(X_4)+h_l^{(2)}(X_3,X_4)]\\
=&(\otimes^2h_l^{(1)})(X_1,X_3)+(\otimes^2h_l^{(1)})(X_1,X_4)+(\otimes^2h_l^{(1)})(X_2,X_3)+(\otimes^2h_l^{(1)})(X_2,X_4)\\
&+(h_l^{(1)}\otimes h_l^{(2)})(X_1,X_3,X_4)+(h_l^{(1)}\otimes h_l^{(2)})(X_2,X_3,X_4)\\
&+(h_l^{(1)}\otimes h_l^{(2)})(X_3,X_1,X_2)+(h_l^{(1)}\otimes h_l^{(2)})(X_4,X_1,X_2)\\
&+(\otimes^2h_l^{(2)})(X_1,X_2,X_3,X_4).   
\end{aligned}
\label{eq::dist_q2r2}
\end{equation}
Therefore, we may decompose $\otimes^2 h_l$ into $\otimes^2h_l^{(1)}$, $h_l^{(1)}\otimes h_l^{(2)}$ and $\otimes^2h_l^{(2)}$.\\
The argument based on distribution law is applicable to general settings, and we may write $\otimes^q h_l$ as the summation of many tensor products of $h_l^{(t)}$. To characterize these summands, we define
\begin{equation}
h^{\balpha}_l=[\otimes^{\alpha_1}h_l^{(1)}]\otimes[\otimes^{\alpha_2}h_l^{(2)}]\otimes\cdots\otimes[\otimes^{\alpha_r}h_l^{(r)}],  
\label{hlalpha}
\end{equation}
for $\balpha\deltaeq(\alpha_1,\ldots,\alpha_r)$. In particular, $\sum_{t=1}^rt\alpha_t$ is the order of $h_l^{\balpha}$, and $\alpha_t$ is the count of $h_l^{(t)}$ in the formula (\ref{hlalpha}). It is straightforward to see that $\otimes^q h_l$ equals the summation of $h_l^{\balpha}$ for all vectors $\balpha\in\N^r$ s.t. $\sum_{t=1}^r\alpha_t=q$. In particular, $\otimes^2h_l^{(1)}$, $h_l^{(1)}\otimes h_l^{(2)}$ and $\otimes^2h_l^{(2)}$ in (\ref{eq::dist_q2r2}) is associated with $\balpha=(2,0)$, (1,1) and (0,2), respectively.

Although $h_l$ is a symmetric kernel, $h_l^{\balpha}$ is not symmetric in general. Define the symmetric counterpart of $h^{\balpha}_l$ as $\tilde h^{\balpha}_l$. Note that $\tilde h^{\balpha}_l$ is a fully degenerate kernel of order $\sum_{t=1}^r t\alpha_t$, as $h^{(t)}_l$ is fully degenerate. For example, the kernel $h_l^{(1)} \otimes h_l^{(2)}$ in (\ref{eq::dist_q2r2}) associated with $\balpha=(1,1)$ is fully degenerate with order $r=3$. It is asymmetric as $h_l^{\balpha}(X_1,X_2,X_3)=h_l^{(1)}(X_1)h_l^{(2)}(X_2,X_3)$, and its symmetric counterpart is defined through
\begin{align*}
\tilde h_l^{\balpha}(X_1,X_2,X_3)=&\frac{1}{3}\Big[h_l^{(1)}(X_1)h_l^{(2)}(X_2,X_3)+h_l^{(1)}(X_2)h_l^{(2)}(X_1,X_3)+h_l^{(1)}(X_3)h_l^{(2)}(X_1,X_2)\Big].
\end{align*}

We define the following U-statistic associated with $h_l^{\balpha}$, which appears in the Hoeffding decomposition of $U_{n,q}$. Note that $c=\sum_{t=1}^r t\alpha_t$ is the order of $h_l^{\balpha}$.
\begin{align*}
U_{n,\balpha}=&(P_c^n)^{-1}\sum_{l\in\cL}\sum_{1\le i_1,i_2,\cdots,i_c\le n}^*h_l^{\balpha}(X_1,\ldots,X_c)\\
=&\binom{n}{c}^{-1}\sum_{l\in\cL}\sum_{1\le i_1<i_2<\cdots<i_c\le n}\tilde h_l^{\balpha}(X_1,\ldots,X_c).
\end{align*}
With $U_{n,\balpha}$, we have the following decomposition.
\begin{lemma}
There exists some constants $C_{\balpha}>0$ such that
\begin{equation}
U_{n,q}=\sum_{c=qs}^{qr}  \sum_{\balpha:\sum_t t\alpha_t=c}C_{\balpha} U_{n,\balpha},    
\label{eq::hoeff}
\end{equation}
where $s$ is the degeneracy of $h_l$.
\label{lem::hoeff}
\end{lemma}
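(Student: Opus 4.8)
The plan is to derive (\ref{eq::hoeff}) by expanding each of the $q$ tensor factors of $\otimes^q h_l$ via the single-kernel Hoeffding decomposition, multiplying out, and regrouping the resulting products according to which Haj\'{e}k projections they contain. The starting point is the elementary identity
\[
h_l(x_1,\ldots,x_r)=\sum_{S\subseteq[r]}h_l^{(|S|)}\bigl((x_j)_{j\in S}\bigr),
\]
which I would prove by M\"obius inversion over the Boolean lattice of $[r]$: inserting the inclusion--exclusion formula defining $h_l^{(t)}$ and interchanging the two summations, the coefficient of each $h_{l,|T|}((x_j)_{j\in T})$ becomes $\sum_{T\subseteq S\subseteq[r]}(-1)^{|S|-|T|}$, which vanishes unless $T=[r]$ and equals $1$ when $T=[r]$, so the right-hand side collapses to $h_{l,r}=h_l$. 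Since $h_l$ has degeneracy $s$ we have $h_{l,u}\equiv0$, hence $h_l^{(t)}\equiv0$, for every $t<s$, so only subsets $S$ with $|S|\ge s$ contribute.

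Next I would tensor this up: writing $B_c:=[(c-1)r+1,cr]$ for the $c$-th block of indices,
\[
(\otimes^q h_l)(X_{[qr]})=\prod_{c=1}^q\,\sum_{S_c\subseteq B_c}h_l^{(|S_c|)}\bigl((X_j)_{j\in S_c}\bigr)=\sum_{(S_1,\ldots,S_q)}\,\prod_{c=1}^q h_l^{(|S_c|)}\bigl((X_j)_{j\in S_c}\bigr),
\]
where the distributive law turns the product of sums into a sum over tuples $(S_1,\ldots,S_q)$ of subsets of the respective blocks. I would then sort the summands by their \emph{type} $\balpha=(\alpha_1,\ldots,\alpha_r)$ with $\alpha_t:=\#\{c:|S_c|=t\}$, so that $\sum_t\alpha_t=q$ and, by the degeneracy remark, $\alpha_t=0$ for $t<s$. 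Each summand of type $\balpha$ coincides, up to a permutation of its $m$ arguments, with the kernel $h_l^{\balpha}$ of (\ref{hlalpha}), whose order is $m:=\sum_{t=1}^r t\alpha_t\in[qs,qr]$.

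The last step is to insert this expansion into $U_{n,q}=\sum_{l}(P^n_{qr})^{-1}\sum^{*}_{1\le i_1,\ldots,i_{qr}\le n}(\otimes^q h_l)(X_{i_1},\ldots,X_{i_{qr}})$ and carry out the bookkeeping. Fix a type $\balpha$ of order $m$. Each associated summand depends only on the $m$ active coordinates $i_j$, $j\in\bigcup_c S_c$; summing the $qr-m$ remaining coordinates over distinct values in $[n]$ produces a factor $P^{\,n-m}_{\,qr-m}$, and summing the active coordinates over all ordered $m$-tuples of distinct values is invariant under permuting the kernel's arguments, so each such summand contributes $P^{\,n-m}_{\,qr-m}\sum^{*}_{1\le j_1,\ldots,j_m\le n}h_l^{\balpha}(X_{j_1},\ldots,X_{j_m})$ (equivalently, one may use the symmetrization $\tilde h_l^{\balpha}$ here). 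Summed over $l$ this equals $P^{\,n-m}_{\,qr-m}P^n_m\,U_{n,\balpha}$. The number of summands of type $\balpha$ is $C_{\balpha}:=\binom{q}{\alpha_1,\ldots,\alpha_r}\prod_{t=1}^r\binom{r}{t}^{\alpha_t}$ (choose which blocks carry which subset size, then the subset within each block). Hence the contribution of $\balpha$ to $U_{n,q}$ is $C_{\balpha}\,P^{\,n-m}_{\,qr-m}P^n_m(P^n_{qr})^{-1}\,U_{n,\balpha}$, and since $P^{\,n-m}_{\,qr-m}P^n_m=\tfrac{(n-m)!}{(n-qr)!}\cdot\tfrac{n!}{(n-m)!}=P^n_{qr}$ the prefactor is exactly $C_{\balpha}>0$. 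Summing over $\balpha$, grouped by their order $c=m$, yields (\ref{eq::hoeff}); the case $r=1$, where $s=1$, the only admissible $\balpha$ is $(q)$, and $C_{(q)}=1$, is a consistency check.

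I expect the main difficulty to be the combinatorial bookkeeping of the third step: identifying the image of every expansion term under the regrouping, checking that distinct (block-size assignment, within-block subset) choices produce genuinely different orderings of the arguments of $h_l^{\balpha}$ yet all contribute equally once the ordered sum $\sum^{*}$ over the active indices is performed, and confirming that the falling-factorial ratio is exactly $1$ (not merely asymptotically $1$), which is precisely what makes each $C_{\balpha}$ free of $n$. No analytic input (moment or cumulant bounds) is needed: (\ref{eq::hoeff}) is an algebraic identity, so once the counting is organized the argument is essentially mechanical.
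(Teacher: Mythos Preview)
Your proof is correct and follows essentially the same approach as the paper's: both expand each factor $h_l$ via the single-kernel Hoeffding identity $h_l(X_I)=\sum_{t}\sum_{J\subseteq I:|J|=t}h_l^{(t)}(X_J)$, multiply out via the distributive law, and regroup by the type vector $\balpha$. Your version is in fact more complete than the paper's sketch: you supply the M\"obius-inversion argument for the basic identity, give the explicit general formula $C_{\balpha}=\binom{q}{\alpha_1,\ldots,\alpha_r}\prod_{t}\binom{r}{t}^{\alpha_t}$ (the paper only computes the special case $\balpha=qe_t(r)$, obtaining $\binom{r}{t}^q$, which agrees with yours), and verify carefully that the falling-factorial ratio $P^{\,n-m}_{\,qr-m}P^n_m/P^n_{qr}$ equals $1$ exactly, so that $C_{\balpha}$ is free of $n$.
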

\begin{remark}
In general, the Hoeffding decomposition (\ref{eq::hoeff}) is a finer composition than the classical one. The classical decomposition is simply based on Haj\'{e}k projection of $\otimes^q h_l$, whereas (\ref{eq::hoeff}) instead takes into account the tensor product structure of $\otimes^q h_l$ and applies distribution law. This finer decomposition allows us to directly impose assumptions on Haj\'{e}k projections of $h_l$ (and some related quantities), otherwise the assumptions will be too abstract to comprehend. We can write the classical Hoeffding decomposition using the kernel $\otimes^qh_l$ as $
U_{n,q}=\sum_{c=qs}^{qr}C^{[c]}U_{n,q}^{[c]},$
where $C^{[c]}$ are some constants, and $U_{n,q}^{[c]}$ is the U-statistic associated with the Haj\'{e}k projection of $\otimes^qh_l$ with order $c$, and includes all $U_{n,\balpha}$ with kernel $\tilde h_l^{\balpha}$ of order $c$.

For example, consider $r=3$ and $q=2$, following the same argument as (\ref{eq::dist_q2r2}), we can decompose $\otimes^3h_l$ into six kernels: $\otimes^2 h_l^{(1)}$, $h_l^{(1)}\otimes h_l^{(2)}$, $h_l^{(1)}\otimes h_l^{(3)}$, $\otimes^2h_l^{(2)}$, $h_l^{(2)}\otimes h_l^{(3)}$ and $\otimes^2h_l^{(3)}$. The classical Hoeffding decomposition, instead, combines the kernels with the same order, e.g. $h_l^{(1)}\otimes h_l^{(3)}$ and $\otimes^2h_l^{(2)}$, and hence only contains 5 terms. It is not difficult to see that the proposed decomposition coincides with the classical one for $r\le 2$.

In general, for any kernel function that can be written as the tensor product of several functions, their Haj\'{e}k projections can be used to derive a finer Hoeffding decomposition in the same way that we obtain (\ref{eq::hoeff}) by applying distribution law.
\end{remark}

It is worth noting that there is one special type of $\balpha$ that plays an important role in the above decomposition, where only $t$-th component of $\balpha$ equals $q$, and all other components are zero, i.e. $\balpha = qe_t(r)$. for $t = 1,...,r$. The associated $h_l^{\balpha}$ is equal to $\otimes^q h_l^{(t)}$. We denote its symmetric version as $H_{l,q}^{(t)}$, so
$$
H_{l,q}^{(t)}(X_{[qt]})=[(qt)!]^{-1}\sum_{I\in P_{qt}(qt)}(\otimes)^q h_l^{(t)}(X_I),
$$ 
and the associated U-statistic as $U_{n,q}^{(qt)}$. In the context of previous illustrating example with $r=2$, $U_{n,q}^{(q)}$ and  $U_{n,q}^{(2q)}$ are the U-statistics associated with $\otimes^qh_l^{(1)}$ and $\otimes^q h_l^{(2)}$ respectively.

In Section \ref{sec::cltr}, we show that the asymptotic variance can be characterized by
\begin{equation}
\tsig_t(q)\deltaeq\sum_{l_1,l_2\in\cL}\cov(H_{l_1,q}^{(t)}(X_{[qt]}),H_{l_2,q}^{(t)}(X_{[qt]}))=\var(H_{q}^{(t)}(X_{[qt]})),
\label{deftsig}
\end{equation}
where we have defined $H_q^{(t)}\deltaeq\sum_{l\in\cL}H_{l,q}^{(t)}$.

Next, we also define $$\sigma_t(l_1,l_2)\deltaeq\cov(h_{l_1}^{(t)}(X_{[t]}),h_{l_2}^{(t)}(X_{[t]})),\quad l_1,l_2\in\cL,$$
and 
\begin{align*}
\Sigma_t(q)\deltaeq\sum_{l_1,l_2\in\cL}\sigma_t^{q}(l_1,l_2)=\sum_{l_1,l_2\in\cL}\cov\big((\otimes^q h_{l_1}^{(t)})(X_{[qt]}),(\otimes^q h_{l_2}^{(t)})(X_{[qt]})\big)
=\var\big(\sum_{l\in\cL}(\otimes^q h_l^{(t)})(X_{[qt]})\big),
\end{align*}
for $q\in 2\Z_+,1\le t\le r$.
The definition is similar to $\tsig_t(q)$, and in particular, we have $\tsig_1(q)=\Sigma_1(q)$ since $\otimes^qh_l^{(1)}$ is symmetric so $H_{l,q}^{(1)}=\otimes^qh_l^{(1)}$. In general, we have $\tsig_t(q)\le \Sigma_t(q)$ due to Cauchy-Schwarz inequality. 
For illustration, consider $t=2$ and $q=2$. We have 
\begin{align*}
H_{2}^{(2)}(X_1,X_2,X_3,X_4)=&\sum_{l\in\cL}\frac{1}{3}\Big[h_l^{(2)}(X_1,X_2)h_l^{(2)}(X_3,X_4)+h_l^{(2)}(X_1,X_3)h_l^{(2)}(X_2,X_4)\\
&+h_l^{(2)}(X_1,X_4)h_l^{(2)}(X_2,X_3)\Big].
\end{align*}
Cauchy-Schwarz inequality then implies that
$$
\cov\left(H_2^{(2)},H_2^{(2)}\right)\le \var\left(\sum_{l\in\cL}\otimes^2h_{l}^{(2)}\right)=\Sigma_2(2),
$$
and hence $\tsig_2(2)\le \Sigma_2(2)$.
We remark that $\Sigma_t(q)$ is more convenient for the order analysis, although it does not characterize the asymptotic variance. 


\subsection{Asymptotic Theory for Order-One Kernels}
\label{sec::cltr}
As the decomposition described in the previous section is a finer version of the traditional Hoeffding decomposition, it inherits most of the properties of  Hoeffding's decomposition such as the orthogonality among projections with different orders. Therefore it can be used to analyze the asymptotic distribution of $U_{n,q}$.  We make the following assumptions to derive the leading term in the decomposition.
\begin{assumption}
Suppose $h_l$ has the degeneracy of order $s$. We have $\Sigma_{t}(q)=o\big(n^{q(t-s)}\Sigma_{s}(q)\big)$ for any $s<t\le r$ and $q\in 2\Z_+$, and $\Sigma_s(q)=O\big(\tsig_s(q)\big)$.
\label{ass::lead_hoeff}
\end{assumption} 
When $s=1$, the assumption is equivalent to $\Sigma_t(q)=o\big(n^{q(t-1)}\Sigma_1(q)\big)$ as $\Sigma_1(q)=\tsig_1(q)$.
For illustration, suppose $h_l$ has order 2. If it is degenerate, then the first statement in the Assumption \ref{ass::lead_hoeff} is automatically satisfied since $s=r=2$ in this case and it suffices to verify the second one. On the other hand, if $h_l$ is not degenerate, i.e. $s=1$, then the second statement in the assumption holds as $\Sigma_1(q)=\tsig_1(q)$ , and it suffices to verify $\Sigma_2(q)=o\big(n^q\Sigma_1(q)\big)$ i.e.
$$
\sum_{l_1,l_2\in\cL}\E^q[h_{l_1}^{(2)}(X_{[2]})h_{l_2}^{(2)}(X_{[2]})]=o\left(n^q\sum_{l_1,l_2\in\cL}\E^q[h_{l_1}^{(1)}(X_1)h_{l_2}^{(1)}(X_1)]\right).
$$
This is trivially satisfied if $\E[h_{l_1}^{(2)}(X_{[2]})h_{l_2}^{(2)}(X_{[2]})]$ and $\E[h_{l_1}^{(1)}(X_1)h_{l_2}^{(1)}(X_1)]$ (and hence the summations) have the same order, which is true in most settings such as simultaneous testing of linear model coefficients and component-wise independence testing discussed in Section \ref{sec::eg}. For example, for Kendall's $\tau$ we have $\Sigma_2(q)=7^q\Sigma_1(q)=(7/9)^qp(p-1)/2$ (see Appendix \ref{sec:verify_kendall} for more details).

In general, when $h_l$ has degeneracy of order $s$, the Haj\'{e}k projection $h_l^{(t)}\equiv 0$ for any $1\le t<s$, which implies $h_{l,\balpha} = 0$ if its order $\sum_t t\alpha_t$ is less than $qs$, and the non-zero kernel of lowest order is $H_{q,l}^{(s)}$ (corresponding to $\balpha=q\cdot e_t(r))$). Note that this is the only non-zero kernel with order $qs$. 
The following lemma states that the U-statistic $U_{n,q}^{(qs)}$ with kernel $H_{q,l}^{(s)}$ is indeed the leading term.
\begin{lemma}
Suppose the kernel $h_l$ has degeneracy of order $s\in[1,r]$. Under Assumption \ref{ass::lead_hoeff}, we have under the null, it holds for any $q\in 2\Z_+$ that $U_{n,q}=\binom{r}{s}^qU_{n,q}^{(qs)}[1+o_p(1)]$
\label{lem::lead_hoeff}
\end{lemma}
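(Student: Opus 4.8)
The plan is to start from the finer Hoeffding decomposition in Lemma \ref{lem::hoeff}, namely $U_{n,q}=\sum_{c=qs}^{qr}\sum_{\balpha:\sum_t t\alpha_t=c}C_{\balpha}U_{n,\balpha}$, and show that every term except the one corresponding to $\balpha=qe_s(r)$ is of smaller order in probability. The term $\balpha=qe_s(r)$ yields precisely $C_{qe_s(r)}U_{n,q}^{(qs)}$, and an easy bookkeeping of the distribution law (counting in how many ways the $s$-th projected kernel $h_l^{(s)}$ appears $q$ times when expanding $\otimes^q h_l=\otimes^q\big(\sum_{t=s}^r h_l^{(t)}\big)$) gives $C_{qe_s(r)}=\binom{r}{s}^q$, which produces the stated constant. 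So the core of the argument is the order comparison among the $U_{n,\balpha}$'s.

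The key steps: first, since each $\tilde h_l^{\balpha}$ is fully degenerate of order $c=\sum_t t\alpha_t$, the standard variance formula for a fully degenerate U-statistic of order $c$ gives $\var(U_{n,\balpha})\asymp \binom{n}{c}^{-1}\var\big(\sum_{l}\tilde h_l^{\balpha}(X_{[c]})\big)\asymp n^{-c}\,\mathrm{Var}(\cdot)$; and because the symmetrization of a tensor product cannot increase its second moment (Cauchy–Schwarz / Jensen on the symmetrizing average), $\var\big(\sum_l \tilde h_l^{\balpha}\big)\le \var\big(\sum_l h_l^{\balpha}\big)$. Next I would bound $\var\big(\sum_l h_l^{\balpha}\big)$ by a product of the component variances: by independence across the disjoint blocks of arguments feeding the different tensor factors, $\var\big(\sum_l h_l^{\balpha}\big)$ is controlled (up to combinatorial constants depending only on $r,q$) by $\prod_{t=s}^r \Sigma_t(q_t^{(\balpha)})$-type quantities, and a further Cauchy–Schwarz/Hölder reduction over $l_1,l_2$ lets me dominate this by $\prod_t \Sigma_t(q)^{\alpha_t/q}$ — the point being that each $\Sigma_t$ appears with total "mass" $q$. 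Then Assumption \ref{ass::lead_hoeff}, $\Sigma_t(q)=o(n^{q(t-s)}\Sigma_s(q))$ for $t>s$ together with $\Sigma_s(q)=O(\tsig_s(q))$, yields, for any $\balpha\neq qe_s(r)$ with order $c$, the bound $n^{-c}\var(\sum_l h_l^{\balpha}) = o\big(n^{-qs}\tsig_s(q)\big)$, while the leading term satisfies $\var(U_{n,q}^{(qs)})\asymp n^{-qs}\tsig_s(q)$ (this is (\ref{deftsig}) with $t=s$, and is the content of the matching lower bound). Dividing through and invoking Chebyshev gives $U_{n,\balpha}=o_p\big(\sqrt{n^{-qs}\tsig_s(q)}\big)$ for all non-leading $\balpha$, hence $U_{n,q}=\binom{r}{s}^q U_{n,q}^{(qs)}[1+o_p(1)]$.

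A couple of technical points deserve care. One is making the "variance of the symmetrized tensor U-statistic factorizes into a product over blocks" step precise: when $\alpha_t\ge 2$ the $\alpha_t$ copies of $h_l^{(t)}$ sit on disjoint argument blocks so their contribution is a genuine $q_t$-fold product structure analogous to the $\Sigma_t(q)$ computation in the text, but the cross terms between different $l_1,l_2$ must be handled by the same Cauchy–Schwarz device used there to get $\tsig_t(q)\le\Sigma_t(q)$; I would mirror that computation. The other is that I only need an upper bound of the right order on each non-leading $\var(U_{n,\balpha})$ and a two-sided bound on $\var(U_{n,q}^{(qs)})$, so I do not need exact constants anywhere except for $C_{qe_s(r)}=\binom rs^q$.

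The main obstacle I anticipate is the block-variance bound for general $\balpha$: controlling $\var\big(\sum_{l}h_l^{\balpha}(X_{[c]})\big)$ by the $\Sigma_t(q)$'s uniformly over the (finitely many, but combinatorially intricate) patterns $\balpha$, in a way that cleanly matches the exponents in Assumption \ref{ass::lead_hoeff} after accounting for the $n^{-c}$ normalization. Once that inequality is in hand, the rest is a short application of the assumption plus Chebyshev.
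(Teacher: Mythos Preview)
Your proposal is correct and follows essentially the same route as the paper: start from the decomposition of Lemma~\ref{lem::hoeff}, bound $\var(U_{n,\balpha})\lesssim n^{-c}\var\big(\sum_l h_l^{\balpha}\big)$ via the symmetrization inequality, factorize the latter as $\sum_{l_1,l_2}\prod_{t=s}^r\sigma_t^{\alpha_t}(l_1,l_2)$, apply H\"older to get $\prod_t\Sigma_t(q)^{\alpha_t/q}$, and invoke Assumption~\ref{ass::lead_hoeff}. The ``main obstacle'' you flag is in fact immediate: because the tensor factors in $h_l^{\balpha}$ sit on disjoint argument blocks, $\var\big(\sum_l h_l^{\balpha}\big)=\sum_{l_1,l_2}\prod_{t}\sigma_t^{\alpha_t}(l_1,l_2)$ holds as an equality, so no combinatorics beyond a single H\"older step is needed.
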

In view of Lemma \ref{lem::lead_hoeff}, the leading term $U_{n,q}^{(qs)}$ in the Hoeffding decomposition is a U-statistic that has fully degenerate kernels $\otimes^qh_l^{(s)}$ induced by $h_l^{(s)}$. Therefore it suffices to study the asymptotic theory of U-statistic associated with fully degenerate ($r = s$) kernel $h_l$, without loss of generality, and then apply the result to $h_l^{(s)}$. We make the following assumption for any fully degenerate kernel, which is applicable to $h_l^{(s)}$. Recall that $\sigma_t(l_1,l_2)\deltaeq\cov(h_{l_1}^{(t)}(X_{[t]}),h_{l_2}^{(t)}(X_{[t]}))$.

\begin{assumption}
Suppose $h_l$ is a fully degenerate kernel function with order $r\ge 1$. We have the followings hold for any $q\in 2\Z_+$.\\
(a) ~$\sum_{l_1,l_2,l_3,l_4\in\cL}[\sigma_r(l_1,l_2)\sigma_r(l_3,l_4)\sigma_r(l_1,l_4)\sigma_r(l_2,l_3)]^{q/2}=o\left(\tsig_r^{2}(q)\right)$.\\
(b1) ~
$h_l$ has up to $4$-th moments with $\sup_{l\in\cL}\E[h_l^4(X_{[r]})]< C,$ 
and for any $q$ there exists a constant $C_q$ such that
$$
\sum_{l_1,...,l_u\in\cL}|\cum^q(h_{l_1}(X_{i_1^{(1)}},\ldots,X_{i_r^{(1)}}),\ldots,h_{l_u}(X_{i_1^{(u)}},\cdots,X_{i_r^{(u)}}))|\le C_q\tsig_r^{u/2}(q),
$$
for $u=2,3,4$ and any $u$ groups of distinct indices $(i_1^{(u)},\ldots,i_{r}^{(u)})$.\\
Moreover, if $r>1$, the following holds.\\
(b2) ~
$h_l$ has up to $q$-th moments with $\sup_{l\in\cL}\E[h_l^q(X_{[r]})]< C,$  
and 
$$
\sum_{l_1,...,l_u\in\cL}|\cum^{q/c}(h_{l_1}(X_{I_1}),\ldots,h_{l_u}(X_{I_c}))|=o(\tsig_r^{2}(q)),
$$
for $u=3,4;1\le c\le q$ and $I_1,\ldots,I_c\in P_{cr}(n)$ such that any element only appears in two $I_t$'s and $I_t\not=I_s$ for any $t\not=s$.
\label{ass::asy_deg}
\end{assumption}

\begin{remark}
In summary, Assumption \ref{ass::asy_deg} requires certain summability on the joint cumulants of $H_{l_1,q}^{(s)}, H_{l_2,q}^{(s)}, H_{l_3,q}^{(s)}, H_{l_4,q}^{(s)}$ for all $l_1,...,l_4 \in \mathcal{L}$. In particular, Assumption \ref{ass::asy_deg} (b2) is a new assumption made for $r>1$, compared to those for $r=1$. This is due to the complexity in the fourth moment of kernels with $r>1$. These summability assumptions guarantee that the asymptotic variance $U_{n,q}^{(qr)}$ is indeed characterized by $\tsig_r(q)$ and martingale CLT holds. In fact, Assumption \ref{ass::cumr} implies Assumption \ref{ass::asy_deg} (a) and (b1), and they are natural generalizations of the assumptions made in \cite{zhang2021adaptive}, whose argument is focused on $r=1$.

Assumption \ref{ass::asy_deg} represents abstract properties of the $h_l$ since our framework allows general kernels and the verification for them requires case-by-case analysis. To shed some light on its feasibility, we demonstrate the verifiability of our assumptions using several commonly studied U-statistics and the assumptions can all be straightforwardly verified. We include the results in Appendix \ref{sec:verify}. 
\end{remark}
\begin{proposition}[Degenerate kernel, $r=s$]
Suppose Assumption \ref{ass::asy_deg} holds. Under the null, we have for any $q\in 2\Z_+$,
$$
[(qr)!]^{-1/2}n^{qr/2}\tsig_r^{-1/2}(q)U_{n,q}\cod N(0,1).
$$
Furthermore, for any finite set $I\subseteq 2\Z_+$, $(U_{n,q})_{q\in I}$ are asymptotically jointly independent.
\label{thm::mainr_deg}
\end{proposition}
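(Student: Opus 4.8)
The plan is to treat $U_{n,q}$ as a completely degenerate U-statistic and establish the CLT through a martingale central limit theorem, with the three clauses of Assumption~\ref{ass::asy_deg} supplying exactly the moment and cumulant bounds the martingale CLT requires. First I would record the basic reduction. Since $h_l$ is fully degenerate of order $r$, the symmetrized tensor kernel $G_n:=H_q^{(r)}=\sum_{l\in\cL}H_{l,q}^{(r)}$ is fully degenerate of order $m:=qr$, and unwinding the definition shows $U_{n,q}=\binom{n}{m}^{-1}\sum_{1\le i_1<\cdots<i_m\le n}G_n(X_{i_1},\ldots,X_{i_m})$ exactly. Complete degeneracy kills every off-diagonal covariance term, so $\var(U_{n,q})=\binom{n}{m}^{-1}\var(G_n)=\binom{n}{m}^{-1}\tsig_r(q)\asymp(qr)!\,n^{-qr}\tsig_r(q)$, which is precisely the normalizing factor in the statement; it therefore suffices to prove $\tilde U_{n,q}:=U_{n,q}/\sqrt{\var(U_{n,q})}\cod N(0,1)$. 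Setting $\mathcal F_k:=\sigma(X_1,\ldots,X_k)$ and $W_{n,k}:=\sum_{\{i_1<\cdots<i_{m-1}\}\subseteq[k-1]}G_n(X_{i_1},\ldots,X_{i_{m-1}},X_k)$, degeneracy of $G_n$ gives $\E[W_{n,k}\mid\mathcal F_{k-1}]=0$, so $\tilde U_{n,q}$ is a normalized sum $\sum_{k=m}^n D_{n,k}$ of the martingale-difference array $D_{n,k}:=\binom{n}{m}^{-1}\var(U_{n,q})^{-1/2}W_{n,k}$.

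By a standard martingale CLT (e.g.\ Hall--Heyde), the marginal convergence $\tilde U_{n,q}\cod N(0,1)$ follows once we verify (i) $\sum_k\E[D_{n,k}^2\mid\mathcal F_{k-1}]\cop 1$ and (ii) a conditional Lindeberg condition, for which $\sum_k\E[D_{n,k}^4]\to0$ is enough. For (i), the mean of $\sum_k\E[W_{n,k}^2\mid\mathcal F_{k-1}]$ equals $\binom{n}{m}^2\var(U_{n,q})=\binom{n}{m}\tsig_r(q)$ exactly, so (i) reduces to showing $\var\big(\sum_k\E[W_{n,k}^2\mid\mathcal F_{k-1}]\big)=o\big(\binom{n}{m}^2\tsig_r^2(q)\big)$. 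Expanding $\E[W_{n,k}^2\mid\mathcal F_{k-1}]$ over pairs $(A,B)$ of $(m-1)$-subsets of $[k-1]$ and then taking the variance produces a sum over overlap patterns of the index sets; since $G_n$ is a symmetrized tensor product of $q$ copies of $h_l$, one additionally tracks how the $q$ blocks of $r$ arguments overlap across the two factors. Each surviving configuration is then bounded by a specific clause of Assumption~\ref{ass::asy_deg}: the ``alternating four-cycle'' diagram by~(a); the configurations that factor through complete block overlaps by the $\cum^q$-summability of~(b1) together with $\sup_l\E[h_l^4]<C$; and --- the contribution that is genuinely new when $r>1$ --- the configurations with partial overlaps inside a block by the $\cum^{q/c}$-summability of~(b2), which is also why the moment bound $\sup_l\E[h_l^q]<C$ there is needed.

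For (ii), conditioning on $X_k$ the statistic $W_{n,k}$ is itself a completely degenerate U-statistic of order $m-1$; the classical fourth-moment expansion for such statistics (most overlap diagrams vanish because integrating out a private index annihilates the whole product), combined with the same moment and cumulant bounds, gives $\sum_k\E[W_{n,k}^4]=o\big(\binom{n}{m}^2\tsig_r^2(q)\big)$, the leading term carrying an extra factor of order $n^{-1}$. Combining (i) and (ii) yields the marginal CLT. For joint asymptotic independence of $(U_{n,q})_{q\in I}$, I would invoke the Cram\'er--Wold device and prove $\sum_{q\in I}c_q\tilde U_{n,q}\cod N\big(0,\sum_{q\in I}c_q^2\big)$. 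The key simplification is that for $q_1\ne q_2$ the kernels $\otimes^{q_1}h_l$ and $\otimes^{q_2}h_l$ have distinct orders $q_1r\ne q_2r$ and are both completely degenerate, so integrating out a non-shared index gives $\cov(U_{n,q_1},U_{n,q_2})=0$ exactly; hence the limiting variance is $\sum_q c_q^2$. All the $\tilde U_{n,q}$ live on the common filtration $\{\mathcal F_k\}$, so $(\tilde U_{n,q})_{q\in I}$ has a single martingale-array representation whose conditional covariance matrix has diagonal entries tending to $1$ by~(i), off-diagonal entries of mean exactly $0$ and vanishing variance by the same combinatorial estimates applied to cross-cumulants, and which satisfies the Lindeberg condition by~(ii); the multivariate martingale CLT then delivers joint convergence to $N(0,I_{|I|})$, i.e.\ asymptotic independence.

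The main obstacle is the combinatorial bookkeeping behind~(i) (and, to a lesser extent,~(ii)): one must enumerate all overlap patterns between the $(m-1)$-element index sets of the two factors and, for each, identify both the order in $n$ of the number of surviving index tuples and the corresponding moment/cumulant expression to be bounded, then check that every such expression is $o\big(\binom{n}{m}^2\tsig_r^2(q)\big)$ (equivalently $o(\tsig_r^2(q))$ after normalization). The new difficulty relative to the order-one theory of \cite{zhang2021adaptive} is that partial overlaps among the $r$ arguments of a single $h_l$ produce cumulant terms of the form $\cum^{q/c}(h_{l_1}(X_{I_1}),\ldots)$ with $c<q$ --- objects that do not occur when $r=1$ --- and organizing the enumeration so that each of them is captured by Assumption~\ref{ass::asy_deg}(b2) is the delicate part of the argument.
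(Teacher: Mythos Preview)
Your proposal is correct and follows essentially the same route as the paper: reduce to a fully degenerate U-statistic with symmetric kernel $H_q^{(r)}$, set up the martingale-difference array on the natural filtration, verify the conditional-variance condition and the Lindeberg condition via fourth moments, and handle joint independence by the Cram\'er--Wold device with a single combined martingale array. The paper organizes the combinatorial overlap analysis you describe into a separate lemma (splitting into the ``paired'' case handled by Assumption~\ref{ass::asy_deg}(a) and the ``unpaired'' case handled by (b1)--(b2)), but the substance and the use of the three clauses of Assumption~\ref{ass::asy_deg} are exactly as you outline.
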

Combining Proposition \ref{thm::mainr_deg} and Lemma \ref{lem::lead_hoeff}, we immediately have the following result.

\begin{theorem}[General case]
Suppose $h_l$ is a kernel function with order $r$, and has degeneracy of order $s\le r$. In addition, Assumption \ref{ass::lead_hoeff} and \ref{ass::asy_deg} hold for $h_l^{(s)}$. Then we have under the null, for any $q\in2\Z_+$,
$$
[(qs)!]^{-1/2}\binom{r}{s}^{-q}n^{qs/2}\tsig_s^{-1/2}(q) U_{n,q}\cod N(0,1).
$$
Furthermore, for any finite set $I\subseteq 2\Z_+$, $(U_{n,q})_{q\in I}$ are asymptotically jointly independent.
\label{thm::mainr}
\end{theorem}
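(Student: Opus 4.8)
The plan is to read off the statement as a direct consequence of Lemma~\ref{lem::lead_hoeff} and Proposition~\ref{thm::mainr_deg}, so that the real content is bookkeeping of normalizing constants plus a multivariate Slutsky argument for the independence part. First I would note that the H\'aj\'ek projection $h_l^{(s)}$ is, by construction, a fully degenerate kernel of order $s$, and that the U-statistic built from its tensor power $\otimes^q h_l^{(s)}$ (equivalently, from its symmetrization $H_{l,q}^{(s)}$, summed over $l\in\cL$) is precisely $U_{n,q}^{(qs)}$: averaging the asymmetric kernel $\otimes^q h_l^{(s)}$ over permutations of its $qs$ arguments produces $H_{l,q}^{(s)}$. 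Moreover, when Proposition~\ref{thm::mainr_deg} is specialized to this fully degenerate kernel (with the role of the order ``$r$'' there played by $s$), the variance-characterizing quantity it produces is $\var\big(\sum_{l\in\cL}[\text{sym. of }\otimes^q h_l^{(s)}]\big)=\var\big(H_q^{(s)}(X_{[qs]})\big)=\tsig_s(q)$ by~(\ref{deftsig}), since the top-order H\'aj\'ek projection of the fully degenerate kernel $h_l^{(s)}$ is $h_l^{(s)}$ itself.

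Hence, invoking Proposition~\ref{thm::mainr_deg} for the kernel $h_l^{(s)}$ — which is legitimate because Assumption~\ref{ass::asy_deg} is assumed to hold for $h_l^{(s)}$ — yields, for every $q\in2\Z_+$,
$$
[(qs)!]^{-1/2}\,n^{qs/2}\,\tsig_s^{-1/2}(q)\,U_{n,q}^{(qs)}\cod N(0,1),
$$
and, for any finite $I\subseteq 2\Z_+$, the joint asymptotic independence of $(U_{n,q}^{(qs)})_{q\in I}$. Next I would apply Lemma~\ref{lem::lead_hoeff}, which under Assumption~\ref{ass::lead_hoeff} gives $U_{n,q}=\binom{r}{s}^q U_{n,q}^{(qs)}[1+o_p(1)]$ under the null. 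Multiplying the displayed CLT by the deterministic factor $[(qs)!]^{-1/2}\binom{r}{s}^{-q}n^{qs/2}\tsig_s^{-1/2}(q)$ and applying Slutsky's theorem produces the marginal CLT for $U_{n,q}$. For the joint statement, write $Z_{n,q}$ for the normalized $U_{n,q}^{(qs)}$ and $A_{n,q}=1+o_p(1)$ for the scaling factor from Lemma~\ref{lem::lead_hoeff}, so that the normalized $U_{n,q}$ equals $A_{n,q}Z_{n,q}$; since the vector $(A_{n,q})_{q\in I}\cop(1,\ldots,1)$ while $(Z_{n,q})_{q\in I}$ converges in distribution to a vector of independent standard normals, the multivariate Slutsky / continuous mapping theorem shows $(A_{n,q}Z_{n,q})_{q\in I}$ shares that limit, which gives both the joint CLT and the asymptotic independence.

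I do not expect a serious obstacle here, since the two ingredients do essentially all the work; the points that require care are (i) verifying that the constant $\binom{r}{s}^q$ coming out of Lemma~\ref{lem::lead_hoeff} threads correctly through the normalization so that the limiting variance is exactly $1$, and (ii) confirming the identification, mentioned above, of the quantity denoted $\tsig_r(q)$ in Proposition~\ref{thm::mainr_deg} — when that result is applied to $h_l^{(s)}$ — with $\tsig_s(q)$ as defined in~(\ref{deftsig}) for the original kernel $h_l$, both being $\var(H_q^{(s)}(X_{[qs]}))$. One should also observe that the $o_p(1)$ remainders in Lemma~\ref{lem::lead_hoeff} are $q$-dependent, but because each converges to $0$ in probability the whole vector of scaling factors still converges to the constant vector, so the asymptotic independence is undisturbed.
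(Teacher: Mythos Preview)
Your proposal is correct and matches the paper's own argument: the paper states that Theorem~\ref{thm::mainr} follows immediately by combining Proposition~\ref{thm::mainr_deg} (applied to the fully degenerate kernel $h_l^{(s)}$) with Lemma~\ref{lem::lead_hoeff}, which is exactly the route you take. Your write-up is in fact more explicit than the paper's, spelling out the identification of $\tsig_s(q)$, the bookkeeping of the $\binom{r}{s}^q$ factor, and the multivariate Slutsky step needed for joint independence.
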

Recall that we call any kernels with order $r=1$ fully degenerate by our convention. Hence Proposition \ref{thm::mainr_deg} and Theorem \ref{thm::mainr} both contain Theorem \ref{thm::mainr1} as a special case, which includes the mean and covariance testing in \cite{he2021asymptotically} and the spatial sign based test studied in \cite{wang2015high}. Moreover, in Appendix \ref{sec:verify} we demonstrate that the assumptions are satisfied under more broad settings such as simultaneous testing of linear model coefficients and component-wise independence testing based on Kendall's $\tau$ and Spearman's $\rho$.

To end this section, we consider a special case for illustration where the kernel $h_l$ is non-degenerate ($s=1$). As we discussed, it suffices to verify Assumption \ref{ass::asy_deg} (a) and (b1) for $h_l^{(1)}$, which is implied by Assumption \ref{ass::cumr} for $h_l^{(1)}$ (as Assumption 2.1 in \cite{zhang2021adaptive} implies Assumption 6.1 therein). We summarize the result in the following corollary.
\begin{corol}[$s=1$]
Suppose $h_l$ is a non-degenerate kernel with order $r$, and Assumption \ref{ass::lead_hoeff} holds so that $\Sigma_t(q)=o(n^{q(t-1)}\Sigma_1(q))$ for any $1<t\le r$. Moreover, Assumption \ref{ass::cumr} holds for $Y_i=(h_l^{(1)}(X_i))_{l\in\cL}$. Then we have under the null, for any $q\in2\Z_+$,
$$
(q!)^{-1/2}r^{-q}n^{q/2}\Sigma_1^{-1/2}(q) U_{n,q}\cod N(0,1).
$$
Furthermore, for any finite set $I\subseteq 2\Z_+$, $(U_{n,q})_{q\in I}$ are asymptotically jointly independent.
\end{corol}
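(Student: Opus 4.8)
The plan is to obtain the corollary as a specialization of the machinery already in place: combine Lemma~\ref{lem::lead_hoeff} with degeneracy order $s=1$ and the order-one central limit theorem, Theorem~\ref{thm::mainr1}, the latter applied to the transformed sample $Y_i\deltaeq(h_l^{(1)}(X_i))_{l\in\cL}$. One could instead invoke Theorem~\ref{thm::mainr} verbatim with $s=1$, but routing through Theorem~\ref{thm::mainr1} is cleaner: a kernel of order $1$ makes part (b2) of Assumption~\ref{ass::asy_deg} vacuous, and the remark following Assumption~\ref{ass::asy_deg} already records that Assumption~\ref{ass::cumr} implies parts (a) and (b1), so nothing beyond the stated hypotheses needs re-verification.

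First I would record the reduction to the leading term. Since $h_l$ is non-degenerate, its degeneracy order is $s=1$; the hypothesis of Lemma~\ref{lem::lead_hoeff}, namely Assumption~\ref{ass::lead_hoeff}, is in force here, and for $s=1$ the part $\Sigma_1(q)=O(\tsig_1(q))$ is automatic because $\tsig_1(q)=\Sigma_1(q)$, so only $\Sigma_t(q)=o(n^{q(t-1)}\Sigma_1(q))$ for $1<t\le r$ is a genuine requirement, which is assumed. Hence, under the null and for every $q\in2\Z_+$,
\begin{equation*}
U_{n,q}=\binom{r}{1}^q U_{n,q}^{(q)}\,[1+o_p(1)]=r^q\,U_{n,q}^{(q)}\,[1+o_p(1)].
\end{equation*}
Next I would identify $U_{n,q}^{(q)}$ with an order-one U-statistic. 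By construction $U_{n,q}^{(q)}$ is the U-statistic with kernel $\sum_{l\in\cL}\otimes^q h_l^{(1)}$, and since $h_l^{(1)}=h_{l,1}$ is a kernel of order $1$, $U_{n,q}^{(q)}$ coincides exactly (as an algebraic identity, not merely asymptotically) with the order-one statistic of Section~\ref{sec::cltr1} built from the i.i.d.\ vectors $Y_i=(h_l^{(1)}(X_i))_{l\in\cL}$, which satisfy $\E[Y_i]=(\E[h_{l,1}(X_1)])_{l}=(\theta_l)_l=\bm{0}$ under the null. Its variance matrix $\Sigma^Y\deltaeq\var(Y_1)$ has entries $(\Sigma^Y)_{l_1,l_2}=\sigma_1(l_1,l_2)$, so $\|\Sigma^Y\|_q^q=\sum_{l_1,l_2\in\cL}\sigma_1^q(l_1,l_2)=\Sigma_1(q)$, and the normalization $\|\Sigma^Y\|_q^{-q/2}$ of Theorem~\ref{thm::mainr1} is exactly $\Sigma_1^{-1/2}(q)$.

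With this identification, and since Assumption~\ref{ass::cumr} is assumed to hold for $Y_1,\dots,Y_n$, Theorem~\ref{thm::mainr1} applies and gives $(q!)^{-1/2}n^{q/2}\Sigma_1^{-1/2}(q)\,U_{n,q}^{(q)}\cod N(0,1)$ for each $q\in2\Z_+$, together with asymptotic joint independence of $(U_{n,q}^{(q)})_{q\in I}$ for any finite $I\subseteq2\Z_+$. Feeding these into the displayed identity via Slutsky's theorem, applied jointly over the finitely many $q\in I$ with the diagonal multipliers $r^q[1+o_p(1)]$ converging coordinatewise in probability to the constants $r^q$, yields both the stated marginal CLT and the asymptotic joint independence of $(U_{n,q})_{q\in I}$.

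I expect no serious obstacle: all the analytic content is already contained in Lemma~\ref{lem::lead_hoeff} and Theorem~\ref{thm::mainr1} (and, beneath the latter, the martingale CLT of Section~\ref{sec::cltr1}). The two points that merely need care are (i) that the passage from $U_{n,q}^{(q)}$ to the order-one statistic for $Y_i$ is an exact identity, which is immediate once the definitions of $U_{n,q}^{(q)}$ and of $h_l^{(1)}=h_{l,1}$ are unwound; and (ii) matching the normalizations, which reduces to $\|\var(Y_1)\|_q^q=\Sigma_1(q)$ together with $\tsig_1(q)=\Sigma_1(q)$. The only mildly delicate step is ensuring the multiplicative $[1+o_p(1)]$ factors are controlled simultaneously across the finitely many indices of $I$ in the joint-independence claim, which is routine via the joint form of Slutsky's theorem.
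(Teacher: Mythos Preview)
Your proposal is correct and essentially mirrors the paper's own argument. The paper simply invokes Theorem~\ref{thm::mainr} with $s=1$ and notes in the paragraph preceding the corollary that Assumption~\ref{ass::cumr} for $h_l^{(1)}$ implies Assumption~\ref{ass::asy_deg} (a) and (b1) (and (b2) is vacuous since $h_l^{(1)}$ has order $1$); you instead apply Lemma~\ref{lem::lead_hoeff} and then Theorem~\ref{thm::mainr1} directly to $Y_i=(h_l^{(1)}(X_i))_{l\in\cL}$, which is the same chain unpackaged, since Theorem~\ref{thm::mainr} is itself Lemma~\ref{lem::lead_hoeff} combined with Proposition~\ref{thm::mainr_deg}, and the latter for an order-$1$ kernel is exactly Theorem~\ref{thm::mainr1}.
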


\subsection{Power Analysis}
\label{sec::power}
In this section, we derive the asymptotic theory of the U-statistic $U_{n,q}$ under the alternative. 
Suppose $\E[h_{l}(X_1,\ldots,X_r)]=\theta_l$. Define the centered U-statistic $\thl=h_l-\theta_l$. We have
\begin{equation}
\begin{aligned}
U_{n,q}=&(P_{qr}^n)^{-1}\sum_{l\in\cL}\sum_{I\in P_{qr}(n)} [(\otimes)^q(\thl+\theta_l)](X_I)\\
=&\|\Theta\|_q^q+\tilde U_{n,q}+\sum_{c=1}^q\binom{q}{c}(P_{cr}^n)^{-1}\sum_{l\in\cL}\sum_{I\in P_{cr}(n)} \theta_l^{q-c}[(\otimes)^c\thl](X_I),
\end{aligned}   
\label{eq::power_decomp}
\end{equation}
where we have viewed the constant $\theta_l$ as a function so that $\theta_l(X_I)\equiv \theta_l$ and $\tilde U_{n,q}$ is the proposed $L_q$-type U-statistic associated with $\tilde h_l$.

Since $\tilde h_l$ has zero mean, we have the asymptotic normality of $n^{qs/2}\tsig_s^{-1/2}(q)\tilde U_{n,q}$, and hence it is natural to define the signal level as $$\gamma_{n,q}=n^{qs/2}\tsig_s^{-1/2}(q)\|\Theta\|_q^q.$$ 
We also make he following assumption to bound the cross-product terms under the local alternative where $\gamma_{n,q}$ goes to some non-zero constant.
\begin{assumption}
\label{ass::power_cross}
Suppose $h_l$ has degeneracy of order $s$. We have for any $q\in 2\Z_+$ and $c=1,\ldots,q-1$,
$$
\sum_{l_1,l_2\in\cL}\theta_{l_1}^{q-c}\theta_{l_2}^{q-c}\sigma_s^c(l_1,l_2)=o\big(n^{-(q-c)s}\tsig_s(q)\big).
$$
\end{assumption}
With Assumption \ref{ass::power_cross}, we have the following theorem under the alternative.
\begin{theorem}
\label{thm::power}
Under the same assumption as Theorem \ref{thm::mainr}. We have the followings for any $q\in2\Z_+$.
\begin{itemize}
    \item Suppose $\gamma_{n,q}\rightarrow \infty$. Then $n^{qs/2}\tsig_s^{-1/2}(q)U_{n,q}\cop\infty$.
    \item Suppose $\gamma_{n,q}\rightarrow 0$. Then $U_{n,q}$ has the same asymptotic distribution as the limiting null.
 \item Suppose Assumption \ref{ass::power_cross} holds. Under the local alternative where $\gamma_{n,q}\rightarrow\gamma\in(0,\infty)$, we have
\begin{equation}
 n^{qs/2}\tsig_s^{-1/2}(q) U_{n,q}\cod N\left(\gamma,(qs)!\binom{r}{s}^{2q}\right).
 \label{eq::asym_alt}
\end{equation}
\end{itemize}
\end{theorem}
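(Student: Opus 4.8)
The plan is to start from the exact expansion (\ref{eq::power_decomp}), which writes $U_{n,q}=\|\Theta\|_q^q+\tilde U_{n,q}+R_{n,q}$, where $\tilde U_{n,q}$ is the $L_q$-type U-statistic built from the centered kernels $\tilde h_l=h_l-\theta_l$ and $R_{n,q}$ is the remaining cross-term sum, $R_{n,q}=\sum_{c=1}^{q-1}\binom{q}{c}\sum_{l\in\cL}\theta_l^{q-c}(P_{cr}^n)^{-1}\sum_{I\in P_{cr}(n)}[(\otimes)^c\tilde h_l](X_I)$. Multiplying by $n^{qs/2}\tsig_s^{-1/2}(q)$ turns the first summand into exactly $\gamma_{n,q}$, so all three bullets reduce to the two statements: (i) $n^{qs/2}\tsig_s^{-1/2}(q)\,\tilde U_{n,q}\cod N\big(0,(qs)!\binom{r}{s}^{2q}\big)$, and (ii) $n^{qs/2}\tsig_s^{-1/2}(q)\,R_{n,q}\cop 0$. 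Given (i) and (ii): when $\gamma_{n,q}\to\infty$ the scaled statistic is a deterministic sequence tending to $+\infty$ plus an $O_p(1)$ term, hence diverges in probability; when $\gamma_{n,q}\to 0$ the deterministic part is negligible and we recover the null limit; and when $\gamma_{n,q}\to\gamma\in(0,\infty)$, Slutsky's theorem gives $N(\gamma,(qs)!\binom{r}{s}^{2q})$.

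For (i) I would invoke Theorem \ref{thm::mainr} applied to the centered kernel $\tilde h_l$, which is legitimate since $\tilde h_l$ has mean zero. The point to verify is that every quantity entering Assumptions \ref{ass::lead_hoeff} and \ref{ass::asy_deg} is unchanged by the centering: because $\tilde h_l$ differs from $h_l$ only by the additive constant $\theta_l$, the H\'ajek projections obey $\tilde h_l^{(t)}=h_l^{(t)}+(-1)^t\theta_l$, so the covariances $\sigma_t(l_1,l_2)$, the aggregates $\Sigma_t(q)$ and $\tsig_t(q)$, and all joint cumulants of order $\ge 2$ are the same; consequently the degeneracy index $s$ and the scaling $n^{-qs/2}\tsig_s^{1/2}(q)$ are inherited from $h_l$, the only difference being that the lower-order H\'ajek strata of $\tilde h_l$ now carry deterministic pieces, and these are precisely the ones peeled off into $\|\Theta\|_q^q$ and $R_{n,q}$. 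Theorem \ref{thm::mainr} then yields the CLT with the stated limiting variance $(qs)!\binom{r}{s}^{2q}$.

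Step (ii) is where the real work lies, and I expect it to be the main obstacle. Fix $c\in\{1,\dots,q-1\}$ and set $W_{n,c}=\sum_{l\in\cL}\theta_l^{q-c}(P_{cr}^n)^{-1}\sum_{I\in P_{cr}(n)}[(\otimes)^c\tilde h_l](X_I)$, which has mean zero because $\E[\tilde h_l]=0$; I would bound $\E[W_{n,c}^2]$ directly. Expanding this second moment and organizing it via the refined Hoeffding decomposition of the order-$cr$ tensor U-statistic of $\tilde h_l$, the slowest-decaying stratum is the one generated by $(\otimes)^c h_l^{(s)}$, contributing a term of order $n^{-cs}\sum_{l_1,l_2\in\cL}\theta_{l_1}^{q-c}\theta_{l_2}^{q-c}\sigma_s^c(l_1,l_2)$, while all other strata carry strictly higher powers of $n^{-1}$ by the same order-counting used for Lemma \ref{lem::lead_hoeff}. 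Assumption \ref{ass::power_cross} says exactly that this dominant sum is $o\big(n^{-(q-c)s}\tsig_s(q)\big)$, hence $\E[W_{n,c}^2]=o\big(n^{-qs}\tsig_s(q)\big)$; after scaling by $n^{qs}\tsig_s^{-1}(q)$ each of the finitely many cross terms has variance $o(1)$, and Chebyshev's inequality with a union bound gives (ii). For the first two bullets, where Assumption \ref{ass::power_cross} is not imposed, the same computation with the cruder bounds $|\sigma_s(l_1,l_2)|\le\sqrt{\sigma_s(l_1,l_1)\sigma_s(l_2,l_2)}$, H\"older's inequality $\sum_l|\theta_l|^{q-c}\le\big(\sum_l|\theta_l|^q\big)^{(q-c)/q}L^{c/q}$, the comparison $\tsig_s(q)\gtrsim\Sigma_s(q)$, and the moment bounds in Assumption \ref{ass::asy_deg} is enough to show $R_{n,q}=o_p\big(\|\Theta\|_q^q\big)$ when $\gamma_{n,q}\to\infty$ and $R_{n,q}=o_p\big(n^{-qs/2}\tsig_s^{1/2}(q)\big)$ when $\gamma_{n,q}\to 0$, which is all those two cases need.
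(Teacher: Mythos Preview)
Your overall architecture matches the paper exactly: start from the expansion (\ref{eq::power_decomp}), apply the null CLT to $\tilde U_{n,q}$, and control the cross terms $W_{n,c}$ via the refined Hoeffding decomposition. Your treatment of the local-alternative case under Assumption~\ref{ass::power_cross} is also essentially the paper's: the leading stratum $\balpha=ce_s(r)$ contributes $n^{-cs}\sum_{l_1,l_2}\theta_{l_1}^{q-c}\theta_{l_2}^{q-c}\sigma_s^c(l_1,l_2)$, which Assumption~\ref{ass::power_cross} bounds by $o(n^{-qs}\tsig_s(q))$, and the remaining strata are handled by the Lemma~\ref{lem::lead_hoeff} order-counting together with Assumption~\ref{ass::lead_hoeff}.

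There is, however, a real gap in your handling of the first two bullets. The ``cruder bounds'' you list --- Cauchy--Schwarz on $\sigma_s(l_1,l_2)$ and the single-index H\"older estimate $\sum_l|\theta_l|^{q-c}\le\|\Theta\|_q^{q-c}L^{c/q}$ --- introduce stray powers of $L=|\cL|$ that need not be absorbed by $\tsig_s(q)$. For instance, in a diagonal setting with $\sigma_s(l_1,l_2)=\mathbf 1\{l_1=l_2\}$ one has $\tsig_s(q)\asymp L$, while your route yields a bound of order $n^{-cs}\|\Theta\|_q^{2(q-c)}L^{2c/q}$; comparing to $n^{-qs}\tsig_s(q)$ under $\gamma_{n,q}\to 0$ would then require $L^2\lesssim\tsig_s(q)$, which fails. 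The paper instead applies H\"older \emph{directly to the bilinear sum},
\[
\sum_{l_1,l_2}\theta_{l_1}^{q-c}\theta_{l_2}^{q-c}\prod_{t=s}^r\sigma_t^{\alpha_t}(l_1,l_2)
\le\Big(\sum_{l_1,l_2}\theta_{l_1}^q\theta_{l_2}^q\Big)^{(q-c)/q}\prod_{t=s}^r\Big(\sum_{l_1,l_2}\sigma_t^q(l_1,l_2)\Big)^{\alpha_t/q}
=\|\Theta\|_q^{2(q-c)}\prod_{t=s}^r\Sigma_t^{\alpha_t/q}(q),
\]
and then uses Assumption~\ref{ass::lead_hoeff} to reduce this to $\|\Theta\|_q^{2(q-c)}\Sigma_s^{c/q}(q)$ (with an extra $o(1)$ when some $\alpha_t>0$ for $t>s$). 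After multiplying by $n^{-cs}$ this is exactly the geometric mean $\big(\|\Theta\|_q^{2q}\big)^{(q-c)/q}\big(n^{-qs}\tsig_s(q)\big)^{c/q}$, which is automatically $o(\cdot)$ of the larger of $\|\Theta\|_q^{2q}$ and $n^{-qs}\tsig_s(q)$ whenever the two are of different orders --- precisely the cases $\gamma_{n,q}\to\infty$ and $\gamma_{n,q}\to 0$. Replace your cruder bounds with this single H\"older step and the proof goes through. (Two minor side remarks: your formula $\tilde h_l^{(t)}=h_l^{(t)}+(-1)^t\theta_l$ is not needed and is not quite the right bookkeeping, but the point that all second- and higher-order cumulants are unchanged by centering is correct; and $\tsig_s(q)\le\Sigma_s(q)$ always, with equivalence coming from Assumption~\ref{ass::lead_hoeff}, not the inequality you wrote.)
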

As a result of Theorem \ref{thm::power}, when $\gamma_{n,q}\rightarrow\infty$, we have $U_{n,q}/\|\Theta\|_q^q=1+o_p(1)$, and hence $U_{n,q}$ is a ratio-consistent estimator of $\|\Theta\|_q^q$.

Since Lemma 6.1 of \cite{zhang2021adaptive} states that for $s=1$, Assumption \ref{ass::power_cross} is implied by Assumption \ref{ass::cumr} (imposed on $h_l^{(1)}$), we have the following corollary.
\begin{corol}[$s=1$]
\label{corol::powers1}
Suppose $h_l$ is non-degenerate, and Assumption \ref{ass::cumr} holds for $h_l^{(1)}$. Moreover, Assumption \ref{ass::lead_hoeff} also holds. Then we have the first two cases in Theorem \ref{thm::power}. In addition, under the local alternative where $\gamma_{n,q}\rightarrow\gamma\in(0,\infty)$, we have for any $q\in2\Z_+$,
$$
 n^{q/2}\tsig_1^{-1/2}(q) U_{n,q}\cod N\left(\gamma,q!r^{2q}\right).
$$
\end{corol}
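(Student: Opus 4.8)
The plan is to obtain the corollary as the specialization $s=1$ of Theorem \ref{thm::power}, so the only real work is to check that the hypotheses assumed here (non-degeneracy of $h_l$, Assumption \ref{ass::cumr} for $h_l^{(1)}$, and Assumption \ref{ass::lead_hoeff}) already imply every hypothesis that Theorem \ref{thm::power} --- i.e.\ Theorem \ref{thm::mainr} together with Assumption \ref{ass::power_cross} --- needs. Since $h_l$ is non-degenerate its degeneracy order is $s=1$, and in that case Theorem \ref{thm::mainr} asks for Assumption \ref{ass::lead_hoeff} (assumed outright) and Assumption \ref{ass::asy_deg} for the Haj\'{e}k projection $h_l^{(1)}$. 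That projected kernel is fully degenerate of order one, so part (b2) of Assumption \ref{ass::asy_deg} is vacuous, while parts (a) and (b1) are exactly what Assumption \ref{ass::cumr} for $Y_i=(h_l^{(1)}(X_i))_{l\in\cL}$ provides --- the same implication noted in the remark after Assumption \ref{ass::asy_deg}, namely Assumption 2.1 implying Assumption 6.1 in \cite{zhang2021adaptive}. Thus Theorem \ref{thm::mainr} applies with $s=1$ and gives the null limit for $n^{q/2}\tsig_1^{-1/2}(q)U_{n,q}$.

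With the null CLT available, I would read off the first two regimes of Theorem \ref{thm::power} from the decomposition (\ref{eq::power_decomp}): after multiplying by $n^{q/2}\tsig_1^{-1/2}(q)$ the deterministic term becomes $\gamma_{n,q}$, the term $\tilde U_{n,q}$ becomes asymptotically $N(0,q!\,r^{2q})$, and the cross-product terms $\sum_{c=1}^{q-1}\binom{q}{c}(P_{cr}^n)^{-1}\sum_{l\in\cL}\sum_{I\in P_{cr}(n)}\theta_l^{q-c}[(\otimes)^c\thl](X_I)$ are handled by Assumption \ref{ass::power_cross}; hence $\gamma_{n,q}\to\infty$ forces divergence in probability and $\gamma_{n,q}\to 0$ returns the null limit. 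For the local-alternative regime I would cite Lemma 6.1 of \cite{zhang2021adaptive} (as flagged just before the statement) to conclude that Assumption \ref{ass::cumr} on $h_l^{(1)}$ implies Assumption \ref{ass::power_cross} when $s=1$, so that the cross-product terms are $o_p(n^{-q/2}\tsig_1^{1/2}(q))$ and Theorem \ref{thm::power} yields $n^{q/2}\tsig_1^{-1/2}(q)U_{n,q}\cod N(\gamma,(qs)!\binom{r}{s}^{2q})$. Substituting $s=1$ --- so $(qs)!=q!$, $\binom{r}{1}^{2q}=r^{2q}$, $\tsig_s(q)=\tsig_1(q)$ --- gives the claimed $N(\gamma,q!\,r^{2q})$.

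The main point is that there is no new obstacle: the analysis lives entirely inside Theorem \ref{thm::power}, and the corollary is a bookkeeping translation of its conclusion to $s=1$ plus the verification that Assumption \ref{ass::cumr} on $h_l^{(1)}$ subsumes the abstract Assumptions \ref{ass::asy_deg} and \ref{ass::power_cross} in this setting. If I instead had to prove it from scratch, the only delicate estimate would be the variance bound on the cross-product terms at the local scaling $\gamma_{n,q}\to\gamma\in(0,\infty)$ --- obtained by Cauchy--Schwarz and Assumption \ref{ass::power_cross}, using that $\|\Theta\|_q^q$ is of the order $n^{-q/2}\tsig_1^{1/2}(q)$ there --- and for $s=1$ this is precisely the content of Lemma 6.1 of \cite{zhang2021adaptive}.
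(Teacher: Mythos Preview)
Your proposal is correct and follows precisely the paper's own route: the paper states the corollary as an immediate consequence of Theorem \ref{thm::power} once one notes (i) that Assumption \ref{ass::cumr} for $h_l^{(1)}$ implies Assumption \ref{ass::asy_deg} for the order-one projected kernel (so Theorem \ref{thm::mainr} applies), and (ii) that Lemma 6.1 of \cite{zhang2021adaptive} upgrades Assumption \ref{ass::cumr} to Assumption \ref{ass::power_cross} when $s=1$; the displayed limit then drops out by setting $s=1$ in \eqref{eq::asym_alt}. One small expository point: the first two regimes of Theorem \ref{thm::power} do not actually require Assumption \ref{ass::power_cross} --- the paper handles the cross terms there by the H\"older/geometric-mean bound in \eqref{eq::var_cross} alone --- so you need the Lemma 6.1 implication only for the local-alternative case, not for all three.
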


To illustrate the power behavior with different $q\in2\Z_+$,  we consider $\Theta=\delta\cdot(\bm{1}_d,\bm{0}_{L-d})^T$ ($L=|\cL|$) for simplicity. In view of (\ref{eq::asym_alt}), to compare the finite-sample power, it suffices to investigate
$$
\frac{\E[U_{n,q}]}{\sqrt{\var(U_{n,q})}}\asymp\frac{n^{qs/2}\tsig_s^{-1/2}(q)\|\Theta\|_q^q} {[(qs)!]^{1/2}\binom{r}{s}^q},
$$
for different $q$. We assume that $\tsig_s(q)\asymp a^q N$ for some $a>0$ and $N=N(L)$ depending on $L$, so that we have
\begin{equation}
\frac{\E[U_{n,q}]}{\sqrt{\var(U_{n,q})}}\asymp [(qs)!]^{-1/2}\binom{r}{s}^{-q}n^{qs/2}a^{-q/2}N^{-1/2}\delta^qd.
\label{eq::evratio}
\end{equation}
To achieve (asymptotic) power $\Phi(R-z_{1-\alpha})$ for some $R>0$, we let $\E[U_{n,q}]/\sqrt{\var(U_{n,q})}=R$ and solve $\delta=\delta(q)$ from (\ref{eq::evratio}) to obtain
\begin{equation}
\delta(q)\asymp \binom{r}{s}\sqrt{a}[(qs)!]^{1/2q}(\sqrt{N}R/d)^{1/q}n^{s/2}.
\label{eq::deltaq}
\end{equation}
Given $\delta=\delta(q)$, the U-statistic $U_{n,q}$ has fixed asymptotic power $\Phi(R-z_{1-\alpha})$. Therefore, we hope to find $q\in 2\Z_+$ associated with the smallest $\delta(q)$, which tends to have highest power. Note that we consider one-sided test, since even $q$ implies that the $L_q$ norm of the parameter is always nonnegative. 
We have the following result that mimics Proposition 2.3 in \cite{he2021asymptotically}.
\begin{proposition}
The optimal $q$ that minimizes $\delta(q)$ in (\ref{eq::deltaq}) is achieved at
\begin{itemize}
    \item 2, if $d\ge \sqrt{N}R$;
    \item some $q\in 2\Z_+$, and is increasing in $\sqrt{N}R/d$ if $d>\sqrt{N}R$.
\end{itemize}
\label{prop::optim_q}
\end{proposition}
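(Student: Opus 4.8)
The plan is to discard from $\delta(q)$ in (\ref{eq::deltaq}) every factor that does not involve $q$. Setting $B\deltaeq\sqrt{N}R/d$, the quantities $\binom{r}{s}$, $\sqrt{a}$, and $n^{s/2}$ are free of $q$, so minimizing $\delta(q)$ over $q\in 2\Z_+$ is the same as minimizing
\[
\phi(q)\deltaeq\frac{1}{2q}\log\big((qs)!\big)+\frac{\log B}{q},
\]
the logarithm of $[(qs)!]^{1/(2q)}B^{1/q}$. I will show that $\phi$ is unimodal on $2\Z_+$ (weakly decreasing and then strictly increasing), locate its minimizer, and track how the minimizer moves with $B$.

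First I would compute the consecutive difference: clearing the positive denominator $2q(q+2)$ and telescoping the factorial ratio $\log((qs+2s)!)-\log((qs)!)=\sum_{j=1}^{2s}\log(qs+j)$ gives, for even $q$,
\[
\phi(q+2)\le\phi(q)\iff\psi(q)\le 4\log B,\qquad \psi(q)\deltaeq q\sum_{j=1}^{2s}\log(qs+j)-2\log\big((qs)!\big).
\]
Then I would check that $\psi$ is strictly increasing, since a second telescoping gives
\[
\psi(q+2)-\psi(q)=(q+2)\sum_{j=1}^{2s}\log\frac{(q+2)s+j}{qs+j}>0 .
\]
Hence $\psi(q)\le 4\log B$ holds on an initial (possibly empty) block of even integers and fails afterwards, so $\phi$ decreases and then increases; its minimizer $q^\ast=q^\ast(B)$ is the smallest even $q$ with $\psi(q)>4\log B$ (and $q^\ast=2$ when no such $q$ exists, i.e.\ when $\psi(2)>4\log B$ already, using that $\psi(q)\to\infty$). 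Because $\psi$ is increasing and $\log B$ increases with $B$, this threshold — and therefore $q^\ast(B)$ — is nondecreasing in $B=\sqrt{N}R/d$, which is the monotonicity claim. For the boundary regime one uses $\psi(2)=2\log\binom{4s}{2s}$, so $q^\ast=2$ precisely when $B^2<\binom{4s}{2s}$; since $\binom{4s}{2s}\ge\binom{4}{2}=6>1$, this holds whenever $B\le 1$, i.e.\ whenever $d\ge\sqrt{N}R$, giving the first bullet.

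The argument is essentially self-contained, and I expect the only real care to lie in handling discreteness — $q$ ranges over $2\Z_+$, not a continuum — namely in verifying that "the smallest even $q$ above the threshold" is genuinely the minimizer with no ties that spoil monotonicity, and that the thresholds quoted in the two bullets are \emph{implied} (not matched) by the exact ones $\binom{4s}{2s}$ and its iterates. As a cross-check one can run the same reasoning in continuous $q$: writing $\phi(x)=f(x)/x$ with $f(x)=\tfrac12\log\Gamma(xs+1)+\log B$ convex, the map $x\mapsto xf'(x)-f(x)$ is nondecreasing, so $\phi$ is unimodal, and $\partial_B\big(xf'(x)-f(x)\big)=-1/B<0$ shows the minimizer shifts right as $B$ grows, which the even-integer minimizer then inherits.
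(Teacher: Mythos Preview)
Your proof is correct and follows essentially the same strategy as the paper's: both establish unimodality of $\delta(q)$ by showing that the sign of the consecutive increment is governed by a monotone threshold function, then read off the minimizer and its dependence on $\sqrt{N}R/d$. The paper works with the ratio $f(q+1)/f(q)$ and the function $g(q)=(P^{(q+1)s}_s)^q/(qs)!$, while you take logarithms and use step size $2$ directly on $2\Z_+$ with $\psi(q)$; these are just reparametrizations of the same argument, and your explicit boundary threshold $\psi(2)=2\log\binom{4s}{2s}$ is a small bonus.
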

\begin{remark}
 Proposition \ref{prop::optim_q} is derived under the assumption $\Theta=\delta(\bm{1}_d,\bm{0}_{L-d})$, and  can be straightforwardly extended to more general $\Theta$, e.g. $\theta_j=\delta r^j$ (or $\delta j^a$) for some constant $r$ (or $a$) and $j\le d$, as long as $\|\Theta\|_q$ can be calculated explicitly. It basically demonstrates that when $\Theta$ is dense, the optimal $q$ is 2. As $\Theta$ gets sparser, the optimal $q$  gets larger. This coincides with our intuition.   
 
\end{remark}

\section{Test Statistics and Computation}
\label{sec::dpustat}
\subsection{Computation for Order-One Kernels with Dynamic Programming}
When $h_l$ has order 1, i.e. $r= 1$, we have
$$
U_{n,q}=\sum_{l\in\cL}(P^n_{q})^{-1}\sum^*_{1\le i_1,\ldots,i_q\le n}\prod_{c=1}^q h_l(X_{i_c}).
$$
In this case, we can apply dynamic programming to speed up the computation and calculate the statistic exactly. In particular, we define for $q\le m \le n$
$$
D_{q,l}(m)=\sum_{1\le i_1<\cdots<i_q\le m}\prod_{c=1}^q h_l(X_{i_c}).
$$
We have $U_{n,q}=\binom{n}{q}^{-1}\sum_{l\in\cL} D_{q,l}(n).$ Fix a component $l\in \cL$. Note that $D_{1,l}(m)=\sum_{i=1}^m h_l(X_{i})$ is simply the cumulative sum of the sequence $\{h_l(X_i)\}_{i=1}^n$, and can be calculated efficiently in $O(n)$. Afterwards, we can use the following approach to calculate $D_{c,l}$ from $D_{c-1,l}$ recursively,
$$
D_{c,l}(m)=D_{c,l}(m-1)+D_{c-1,l}(m-1)h_l(X_m),\quad m\ge c,
$$
with $D_{c,l}(m)=0$ for $1\le m<c$. We run this step for $c=2,\ldots,q$ and each step takes $O(n)$ time.

In summary, the total cost of calculating $D_{q,l}(n)$ and $U_{n,q}$ are $O(qn)$ and $O(qn|\cL|)$, respectively, which is a huge reduction from brute-force calculation that requires $O(qn^{q}|\cL|)$ computation.

\subsection{An Asymmetric U-statistic for Kernels with Higher Order}
The method in last section only works for $h_l$ with order 1. However, it motivates us to consider the following U-statistic for $r>1$, to which dynamic programming can be applied.
$$
U_{n,q}^M=\sum_{l\in\cL}\binom{n}{qr}^{-1}\sum_{1\le i_1<\cdots<i_{qr}\le n}\prod_{c=1}^q h_l(X_{i_{(c-1)r+1}},\ldots,X_{i_{cr}}).
$$
Here we only consider all monotone indices in each summand, so that dynamic programming is applicable as we discuss next. We use the superscript $M$ to emphasize the monotonicity, and to distinguish it from $U_{n,q}$.
Note that $U_{n,q}^M$ is essentially a U-statistic with asymmetric kernel. 

To see the difference between $U_{n,q}$ and $U_{n,q}^M$, consider the case $r= 2$ and $q=2$. Then $U_{n,q}$ is essentially U-statistic of order $q\cdot r=4$. In this case, the (symmetric) kernel for $U_{n,q}$ is given by
$$\frac{1}{3}\big[h_l(X_1,X_2)h_l(X_3,X_4)+h_l(X_1,X_3)h_l(X_2,X_4)+h_l(X_1,X_4)h_l(X_2,X_3)\big],$$ 
while that for $U_{n,q}^M$ equals $h_l(X_1,X_2)h_l(X_3,X_4)$. Although $U_{n,q}^M$ is also a U-statistic of order 4, the number of unique summand in $U_{n,q}$ is 3 times that of $U_{n,q}^M$.

To apply dynamic programming, we define for $qr\le m\le n$,
$$D_{q,l}^M(m)=\sum_{1\le i_1<\cdots<i_{qr}\le m}\prod_{c=1}^q h_l(X_{i_{(c-1)r+1}},\ldots, X_{i_{cr}}).$$
We first initialize $D_{0,l}^M(m)\equiv 1$ for $1\le m\le n$. Then we use the following recursive updating formula for $c=1,\ldots,q$ and $m\ge cr$,
$$
D_{c,l}^M(m)=D_{c,l}^M(m-1)+\sum_{(c-1)r<i_1<\cdots<i_{r-1}<m}D^M_{c-1,l}(i_1-1)h_l(X_{i_1},\cdots,X_{i_{r-1}},X_m),
$$
with $D^M
_{c,l}(m)=0$ for $1\le m<cr$. Note that the calculation for each $c,m$ is $O(m^{r-1})$. Therefore, the total costs of obtaining $D_{q,l}^M(n)$ and $U_{n,q}^M$ are $O(qn^r)$ and $O(qn^r|\cL|)$ respectively, much better than the brute force approach with $O(qn^{qr}|\cL|)$ computation.

As an illustrating example that summarizes the idea of the recursive formula, again consider the case $r=2,q=2$. Suppose we have calculated $D_{2,l}^M(m)$ for $m<n$ and the full sequence $D_{1,l}^M$. To obtain $D_{2,l}^M(n)$, we only need to calculate the difference between $D_{2,l}^M(n)$ and $D_{2,l}^M(n-1)$, which consists of terms of form $h_l(X_i,X_j)h_l(X_k,X_n)$ for $1\le i<j<k<n$. If we fix $k$, we can factor $h_l(X_k,X_n)$ out, the summation over $h_l(X_i,X_j)$ is simply $D_{1,l}^M(k-1)$ and we obtain
$$
D_{2,l}^M(n)=D_{2,l}^M(n-1)+\sum_{k=3}^{n-1}D^M_{1,l}(k-1)h_l(X_k,X_n).
$$
It is not difficult to see that the asymptotic variance of $U_{n,q}^M$ has order $n^{qs/2}\Sigma_r(q)$, which is proportional to $\Sigma_s$ instead of $\tsig_s$ as the latter describes the variance of the symmetric kernel $H_{l,q}^{(s)}$ and hence $U_{n,q}$. However, as the leading term in the Hoeffding-type decomposition of $U_{n,q}^M$ is a weighted U-statistic, it is technically difficult to derive the exact form of the asymptotic variance, but we still expect the asymptotic normality and independence to hold, and we conjecture that there exists some constant $C_{q,s}^M$ such that
$$
n^{qs/2}\Sigma_s^{-1/2}(q) U_{n,q}^{M,(qs)}\cod N(0,C_{q,s}^M),
$$
and we leave the rigourous proof for future research. If $h_l$ is fully degenerate, i.e. $s=r$, then it is not difficult to see that $C_{q,s}^M=(qr)!$ as $U_{n,q}$ and $U_{n,q}^M$ coincides in this case. In Section \ref{sec::var_est}, we propose a permutation based variance estimation procedure, which avoids calculating the exact variance.

\subsection{Variance Estimator}
\label{sec::var_est}
In Section \ref{sec::cltr}, we have shown that
$$
[(qs)!]^{-1/2}\binom{r}{s}^{-q}n^{qs/2}\tsig_s^{-1/2}(q) U_{n,q}\cod N(0,1).
$$
However, as $\tsig_s(q)$ is unknown, we shall propose an unbiased estimator $\hsig_s(q)$ for $\tsig_s(q)$, with which we may define the following studentized statistic,
\begin{equation}
T_{n,q}=[(qs)!]^{-1/2}\binom{r}{s}^{-q}n^{qs/2}\hsig_s^{-1/2}(q) U_{n,q}
\label{eq::tnq}
\end{equation}
Interestingly, the estimator $\hsig_s(q)$ still falls into the framework introduced in this paper, i.e., it is the proposed U-statistic $U_{n,q}$ associated with some kernels derived from $h_l$. Therefore, the ratio-consistency is guaranteed by the theories in Section \ref{sec::cltr}.

For illustration, consider $h_l$ with order 1. In this case, we have
$$
\tsig_1(q)=\sum_{l_1,l_2\in\cL}\cov^q\big(h_{l_1}(X_1),h_{l_2}(X_1)\big).
$$
Consider $\cL^2=\{\bl=(l_1,l_2):l_1,l_2\in\cL\}$, and  $\Theta=\big(\theta_{\bl}\big)_{\bl\in\cL^2}$ with $\theta_{\bl}=\cov\big(h_{l_1}(X_1),h_{l_2}(X_1)\big)$, and the kernel $g_{\bl}(X_1)=h_{l_1}(X_1)h_{l_2}(X_1)$.

We can estimate $\tsig_1(q)=\|\tilde\Theta\|_q^q$ with the proposed U-statistic associated with $g_{\bl}$, i.e.
$$
 \sum_{\bl\in\cL^2}(P^n_q)^{-1}\prod_{c=1}^q g_{\bl}(X_c)=\sum_{l_1,l_2\in\cL}(P^n_q)^{-1}\prod_{c=1}^q h_{l_1}(X_c)h_{l_2}(X_c).
$$
This is an unbiased estimator under the null where $\E[h_l]\equiv 0$. To handle the non-zero mean under the alternative, we simply center $h_l$ and define 
$\tilde h_l(X_1)=h_l(X_1)-\frac{1}{n}\sum_{i=1}^n h_l(X_i).$
We modify the kernels $g_{\bl}$ correspondingly, and define $\tilde g_{\bl}(X_1)=\tilde h_{l_1}(X_1)\tilde h_{l_2}(X_1)$.
Then we may estimate $\tsig_1(q)$ with the associated statistic
$$
 \hsig_1(q)\deltaeq\sum_{\bl\in\cL^2}(P^n_q)^{-1}\prod_{c=1}^q \tilde g_{\bl}(X_c)=\sum_{l_1,l_2\in\cL}(P^n_q)^{-1}\prod_{c=1}^q \tilde h_{l_1}(X_c)\tilde h_{l_2}(X_c).
$$
It is expected that $\hsig_1(q)$ is a ratio-consistent estimator for $\tsig_1(q)$, although a formal justification is beyond the scope of this paper.

In general, for any non-degenerate ($s=1$) kernels with order $r>1$, we may follow a similar procedure. Under the null, we may define the kernel $g_{\bl}(X_1)=h_{l_1}^{(1)}(X_1)h_{l_2}^{(1)}(X_1)$. When the exact form of $h_l^{(1)}$ is unknown, one approach is to estimate it based on the sample and derive a plug-in estimator. An alternative approach is to apply the fact that
$$
\E[h_{l_1}^{(1)}(X_1)h_{l_2}^{(1)}(X_1)]=\E[h_{l_1}(X_1,\ldots,X_r)h_{l_2}(X_r,\ldots,X_{2r-1})],
$$
and define $g_{\bl}=h_{l_1}(X_1,\ldots,X_r)h_{l_2}(X_r,\ldots,X_{2r-1})$ as a kernel with order $2r-1$ instead. To guarantee the consistency under the alternative, we have to define $\tilde g_{\bl}$, the counterpart of $g_{\bl}$ obtained by centering $h_l^{(1)}$ or $h_l$. Moreover, we can also consider the U-statistic with monotone indices, i.e. $U_{n,q}^M$  to save computational cost, since we are only interested in obtaining a ratio-consistent estimator for $\tsig_1(q)$.

When $s>1$, the form becomes even more complicated. Since the computation of $U_{n,q}$ with large $q$ is generally infeasible for $r\ge 2$, we recommend using $U_{n,q}^M$  to construct the test in practice, and hence we do not discuss the variance estimation for $U_{n,q}$ in the case $s\ge 2$.

For $U_{n,q}^M$, as discussed in Section \ref{sec::dpustat}, it is expected to be asymptotically normal with asymptotic variance proportional to $\Sigma_s(q)$, and we follow some non-parametric approaches to estimate the asymptotic variance. One approach is to use permutation based variance estimator. This applies to any testing problems where the distribution of the test statistic is invariant up to certain permutations under the null, and hence the proposed U-statistics computed on the permuted data has the same distribution as the null distribution. For example, any test for component-wise independence satisfies this property. The permutation based method is also applicable to linear model coefficient testing example (under certain model assumptions) introduced in Section \ref{sec::eg}.

For illustration, let $\sigma:[n]\rightarrow [n]$ denote a permutation on $[n]$.
To simulate the distribution of the following U-statistic, which was proposed for testing the linear model coefficients. 
\begin{align*}
U_{n,q}^M=&(P^n_{2q})^{-1}\sum_{l\in\cL}\sum_{1\le i_1<\cdots<i_{2q}\le n}2^{-q}\prod_{c=1}^q [(X_{i_{2c-1}}-X_{i_{2c}})(Y_{i_{2c-1}}-Y_{i_{2c}}-(X_{i_{2c-1}}-X_{i_{2c}})^T\bbeta_0)]_l\\
=&(P^n_{2q})^{-1}\sum_{l\in\cL}\sum_{1\le i_1<\cdots<i_{2q}\le n}2^{-q}\prod_{c=1}^q [(X_{i_{2c-1}}-X_{i_{2c}})(E_{i_{2c-1}}-E_{i_{2c}})]_l,
\end{align*}
where we have defined $E_i\deltaeq Y_i-X_i\bbeta_0$ for $i=1,\ldots,n$. Then we may consider
$$
U_{n,q}^{M,\sigma}=(P^n_{2q})^{-1}\sum_{l\in\cL}\sum_{1\le i_1<\cdots<i_{2q}\le n}\prod_{c=1}^q h_l\big((X_{i_{2c-1}},Y_{\sigma(i_{2c-1})}),(X_{i_{2c}},Y_{\sigma(i_{2c)}})\big),
$$
for any random permutation $\sigma$. Under the null $\bbeta=\bbeta_0$,  $E_i=\varepsilon_i$ is i.i.d. and independent of $X_i$. Therefore, the distribution of $U_{n,q}^{M,\sigma}$ coincides with that of $U_{n,q}^{M}$ under the null, and can be used to estimate the asymptotic variance. To be more precise, consider $B$ random permutations $\sigma_1,\ldots,\sigma_B$. We estimate $\var(U_{n,q}^M)$ by the permutation based sample variance, i.e.
$$
V_{n,q}^M=\frac{1}{B-1}\sum_{b=1}^B\left(U_{n,q}^{M,\sigma_b}-\frac{1}{B}\sum_{b'=1}^BU_{n,q}^{M,\sigma_{b'}}\right)^2.
$$
The studentized statistic is then defined as $T_{n,q}^M=\frac{U_{n,q}^M}{\sqrt{V_{n,q}^M}}.$ We may also conduct permutation based test by obtaining the critical value directly from the empirical distribution of $\{U_{n,q}^{M,\sigma_b}\}_{b=1}^B$. Empirically, we find that two approaches have similar performance when $B=100$, and hence we only report the result of $T_{n,q}^M$ in the simulations. 

We remark that if we have computational power to calculate the full U-statistic $U_{n,q}$, this permutation based approach is still applicable to estimating the variance  of $U_{n,q}$, and we simply replace $U_{n,q}^{M,\sigma}$ by $U_{n,q}^\sigma$, that is, the full U-statistic calculated on permuted sample.

The other possible approach is the (multiplier) bootstrap method. 
However, the theoretical analysis for bootstrap based U-statistic with high orders is challenging \citep{arcones1992bootstrap}, and in this paper we rely mainly on the permutation based approach.

\subsection{Adaptive Testing Procedure}
Let $I$ be a set of $q\in2\Z_+$ (e.g. \{2,6\}). Under the null $T_{n,q}$'s are asymptotically independent for different $q\in I$, we may construct an adaptive test straightforwardly by combining $q \in I$. Denote the $p$-values associated with $T_{n,q}$ as $p_q$, then we may define the statistic of the adaptive test associated with $I\subseteq 2\Z_+$ as $p_{ada}=\min_{q\in I}p_q$, and its corresponding $p$-value equals to $1-(1-p_{ada})^{|I|}$.

Equivalently, if we want to conduct a level-$\alpha$ test, we may define the adaptive test function with $I$ as
$$
\phi_{I,\alpha}=\max_{q\in I}\phi_{q,1-(1-\alpha)^{1/|I|}},
$$
where $\phi_{q,\alpha}$ is the level-$\alpha$ test based on $T_{n,q}$, i.e. $\phi_{q,\alpha}=\mathbbm{1}(T_{n,q}>\Phi(1-\alpha))$ with $\Phi(x)$ denoting the cdf of standard normal.
Therefore, the adaptive test rejects the null if one of $T_{n,q}$'s exceeds its critical value $\Phi((1-\alpha)^{1/|I|})$.
Similar procedure can be straightforwardly applied to $T_{n,q}^M$ as well. 

In general, a smaller $q$ (say $q=2$) has higher power against dense alternative, while a larger $q$ is favored under sparse alternative. With adaptive test, we can combine the advantages of different $q$'s and achieve high power against both dense and sparse alternatives, and the power of the adaptive test goes to 1 as long as one of the test statistics $T_{n,q}$ has asymptotic power 1. Since the corrected level $1-(1-\alpha)^{1/|I|}$ is decreasing in $I$, it may hurt the overall power of the adaptive test to include too many $q$ in $I$, even though each single-$q$ based test might be optimal for some particular alternative. To have better finite sample performance, we recommend to combine two single-$q$ based tests in practice, which has relatively high power against alternatives with different sparsity.

\subsection{Two Sample Asymmetric U-statistics}
In this section, we introduce the two-sample U-statistics with monotone index, which the dynamic programming can also be applied to. We mainly consider kernels with order (1,1) for illustration, as the main motivating example is the two-sample spatial sign test \citep{chakraborty2017tests}. Suppose we observe two i.i.d. samples $X_1,\ldots,X_n\in\R^p$ and $Y_1,\ldots,Y_m\in\R^p$. We want to test some high dimensional parameter $\Theta=\Theta(X_i,Y_i)=\{\theta_l:l\in\cL\}=\bz$. Recall the full U-statistic is defined as
$$U_{n,m,q}=\sum_{l\in\cL}(P^n_{q}P^m_{q})^{-1}\sum^*_{1\le i_1,\ldots,i_{q}\le n}\sum^*_{1\le j_1,\ldots,j_{q}\le m}\prod_{c=1}^q h_l(X_{i_c},Y_{j_c}).$$
In particular, for the spatial sign test, we take $\cL=\{1,\ldots,p\}$, and $h_l(X_i,Y_j)=(x_{i,l}-y_{j,l})\,/\,\|X_i-Y_j\|$.

As the computation of $U_{n,m,q}$ is $O(n^qm^q)$, we consider the monotonically-indexed counterpart as follows,
$$U_{n,m,q}^M=\sum_{l\in\cL}\binom{n}{q}^{-1}\binom{m}{q}^{-1}
\sum_{1\le i_1<\cdots<i_{q}\le n}\sum_{1\le j_1<\cdots<j_{q}\le m}\prod_{c=1}^q h_l(X_{i_c},Y_{j_c}).$$
With dynamic programming, we can speed up the computation of $U_{n,m,q}^M$ to $O(mn)$. For $q\le i\le n$ and $q\le j\le m$, define
$$S_{q,l}^M(i,j)=\sum_{1\le i_1<\cdots<i_{q-1}< i}\sum_{1\le j_1<\cdots<j_{q-1}< j}h_l(X_i,Y_j)\prod_{c=1}^{q-1} h_l(X_{i_c},Y_{j_c}),$$
which involves all the summands in $U_{n,m,q}^M$ s.t. the largest indices of $X$ and $Y$ are $i$ and $j$, respectively. We also define
$$D_{q,l}^M(i,j)=\sum_{1\le i_1<\cdots<i_q\le i}\sum_{1\le j_1<\cdots<j_q\le j}\prod_{c=1}^{q} h_l(X_{i_c},Y_{j_c}).$$
If we view $S_{q,l}(i,j)$ as a matrix indexed by $i\in[1,n],j\in[1,m]$, then $D_{q,l}(i,j)$ adds up all the elements that is top and left to the $(i,j)$-th element.
In particular, we have $$U_{n,m,q}^M=\binom{n}{q}^{-1}\binom{m}{q}^{-1}\sum_{l\in\cL}D_{q,l}^M(n,m),$$
and $S_{q,l}^M(i,j)=D_{q-1,l}^M(i-1,j-1)h_l(X_i,Y_j)$.
We first initialize $D_{0,l}^M(i,j)\equiv 1$ for all $i,j$. Then we use the following recursive updating formula for $c=1,\ldots,q$ and $i,j\ge c$,
\begin{align*}
 D_{c,l}^M(i,j)=&S_{c,l}^M(i,j)+D_{c,l}^M(i-1,j)+D_{c,l}^M(i,j-1)-D_{c,l}^M(i-1,j-1)\\
 =&D_{c-1,l}^M(i-1,j-1)h_l(X_i,Y_j)+D_{c,l}^M(i-1,j)+D_{c,l}^M(i,j-1)-D_{c,l}^M(i-1,j-1).
\end{align*}

We conjecture that the statistics $U_{n,m,q}$ and $U_{n,m,q}^M$ are both asymptotically normal. Their variances can be estimated by permutation (i.e. randomly permuting the data between two samples), similar to the one-sample setting.

\section{Simulation Studies}
\label{sec::sim}
In this section, we examine the finite sample performance of the proposed  single-$q$ and adaptive tests via simulations. Throughout this section,  for kernels with $r=1$ such as the spatial sign test, we use $T_{n,q}$ based on the variance estimator introduced in Section \ref{sec::var_est}; for $r>1$, we construct the test with $T_{n,q}^M$ using the permutation based variance estimator. Section \ref{sec::sim_spatial} presents the simulation results for one-sample test of spatial sign, and Section \ref{sec::sim_ts_spatial} includes its two-sample counterpart. Section \ref{sec::sim_ind} contains the results for testing component-wise independence, and Section \ref{sec::sim_lr} corresponds to testing the nullity of linear model coefficients. For all testing problems, we perform 1000 Monte Carlo replications. 

\subsection{Tests for Spatial Sign}
\label{sec::sim_spatial}
In this subsection, we want to test
$$
\cH_0: \E[X_i/\|X_i\|\,]=\bm{0}\quad v.s. \quad \cH_a: \E[X_i/\|X_i\,\|]\not=\bm{0}.
$$
This is similar to the standard mean-testing problem $
\cH_0: \E[X_i]=\bm{0}$ and $\cH_a: \E[X_i]\not=\bm{0}$,
with certain elliptical symmetry assumption, but does not require the existence of first moment. \cite{wang2015high} extended the U-statistic based two sample test by \cite{chenqin2010} and proposed the $L_2$-norm based spatial sign test statistic, which is indeed a special case of our U-statistic with kernel $h_l(X_i)=x_{i,l}/\|X_i\|$ and $q=2$, although they use a different variance estimator. Through both theory and simulations, \cite{wang2015high} found that their test has slight power loss compared to the test by \cite{chenqin2010} for Gaussian data, but has much higher power for heavy-tailed data.  However, the  $L_2$-norm based spatial sign test mainly targets at dense alternatives, and their performance under the sparse alternative is not yet examined.  

Below we generate the simulated data from the model $X_i= Z_i+\mu$, where  $Z_i\in \R^p$ has i.i.d. standard normal or $t_3$ components. 
We include both sparse and dense alternatives, (corresponding to $r=2$ and $r=p$, respectively) and also consider both light and heavy-tailed distributions (i.e., standard normal versus $t_3$). Table~\ref{tb:spatialsign} collects the size and power results. Among all the tests being compared, we mention that mean test with $q=2$ corresponds to the test by \cite{chenqin2010} (up to a different variance estimator), spatial sign test with $q=2$ corresponds to the test by \cite{wang2015high} (up to a different variance estimator). Our test with $q = 2$ performs very similar to the test by \cite{chenqin2010} (for mean test) and the test by \cite{wang2015high} (for spatial sign test), and therefore we omit the results of their tests.


\begin{table}[H]
\centering
\begin{tabular}{|c|c|c|c|c|c|c|c|}
\hline
DGP & $\delta,r$ & Test & $q=2$ & $q=4$ & $q=6$ & $q=2,4$ & $q=2,6$\\ \hline
\multirow{6}{*}{Gaussian} & \multirow{2}{*}{0, NA} &  mean & 6.1 &  5.9 & 3.0 & 6.4 & 5.4 \\ \cline{3-8}
& &  spatial sign & 5.8 & 6.1 
& 3.2 & 6.4 & 5.6\\ \cline{2-8}
& \multirow{2}{*}{0.3, 2} &  mean & 74.3 & 96.3 & 90.0 & 96.3 & 91.4 \\ \cline{3-8}
& & spatial sign & 74.5 & 96.0 & 90.0 & 96.2 & 92.1\\ \cline{2-8}
& \multirow{2}{*}{0.1, $p$} &  mean & 100 & 92.2 & 37.8 & 100 & 100\\ \cline{3-8}
&  & spatial sign & 100 & 92.1 & 37.3 & 100 & 100\\ \hline
\multirow{6}{*}{$t_3$} & \multirow{2}{*}{0, NA} &  mean & 6.1 & 4.5 & 1.9 & 6.8 & 4.8\\ \cline{3-8}
& &  spatial sign & 6.2 & 4.8 & 2.1 & 6.9 & 4.9\\ \cline{2-8}
& \multirow{2}{*}{0.4, 2} &  mean & 42.7 & 73.1 & 57.6 & 72.7 & 63.6\\ \cline{3-8}
& & spatial sign & 51.1 & 79.3 & 65.4 & 80.1 & 71.1 \\ \cline{2-8}
& \multirow{2}{*}{0.1, $p$} & mean & 98.2 & 30.1 & 14.5 & 97.0 & 97.0\\ \cline{3-8}
&  & spatial sign & 99.8 & 36.5 & 15.6 & 99.3 & 99.3\\ \hline
\end{tabular}
\caption{Size and power in percentage for mean and spatial sign test}
\label{tb:spatialsign}
\end{table}

As seen from Table~\ref{tb:spatialsign}, the size for smaller $q$ (i.e., $q=2,4$) tends to be more accurate than that for large $q$ (i.e., $q=6$), which exhibits some undersize. The adaptive test based on $q=2,4$ has slight oversize whereas the size for $q=2,6$ appears accurate.   In terms of the power, 
 single $q$ tests based on large and small $q$'s correspond to high power against sparse and dense alternatives, respectively, as expected. 
 Moreover, for both alternatives and all $q$, there is no obvious power loss when we compare the spatial sign test  to the mean test for Gaussian data, but the former has notable higher power for $t_3$-distributed data. This suggests that the power robustness and advantage of spatial-sign test based on $L_2$ norm, as discovered first in \cite{wang2015high}, carries over to the more general $L_q$-norm setting. Finally, we see that the adaptive tests have overall good power against both sparse and dense alternatives, and they are always very close to the best single $q$ test in power. It appears that  the adaptive test with $q=$(2,4) slightly outperforms its counterpart with $q=$(2,6) in the sparse regime, which might be related to its slight oversize for both settings.

\subsection{Tests for Component-wise Independence}
\label{sec::sim_ind}

In this subsection, we compare several tests for the componentwise independence based on  Kendall's $\tau$ statistic.
We generate i.i.d. data $X_1,\ldots,X_n\in\R^p$ according to the following two settings used in \cite{leung2018testing}.  
\begin{itemize}
    \item Gaussian vectors with banded covariance matrix: $X_i\iid N(0,\Sigma)$ for $i=1,\ldots,n$,
    where $\Sigma=(\sigma_{ab})_{a,b=1}^p$ with $\sigma_{ab}=(1-\delta)1\{a=b\}+\delta1\{a\le r,~b\le r\}$.
    \item $t$-distribution with banded covariance matrix: $X_i\iid \Sigma^{1/2}Z$ for $i=1,\ldots,n$, where $\Sigma$ is given above, and $Z_{ij}\iid t_3;~i=1,\ldots,n;j=1,\ldots,p$.
\end{itemize}
We set $\delta=0$ under the null, $(\delta,r)=(0.5,3),\,(0.3,4),\,(0.25,6),\,(0.15,10),\,(p,0.05)$ for alternatives with different sparsity levels, and $(n,p)=(100,50)$.

Among the tests we compare, we include our test $U_{n,q}^M$ with a permutation-based variance estimator, the Kendall's $\tau$ based maximum-type statistic introduced in \cite{han2017distribution} (denoted as $L_{\infty}$ in Table~\ref{tb:kendalltau}), the Kendall's $\tau$ based sum-of-squares-type statistic proposed in \cite{leung2018testing} (denoted as $L_2$ in Table~\ref{tb:kendalltau}), a simple combination of $L_2$ and $L_{\infty}$ based on Bonferroni correction (denoted as $L_2\& L_{\infty}$ in Table~\ref{tb:kendalltau}). 

Note that the $L_2$ test by \cite{leung2018testing} includes the diagonal terms, and is only asymptotically unbiased.  They also have an unbiased $L_2$-type statistic, which is a special case of our  proposed full U-statistic with $q=2$, and its performance is very similar to the biased version according to the simulations, so is not reported. \cite{leung2018testing}  also introduce an efficient algorithm based on inclusion-exclusion law to reduce the computational complexity from $O(n^{2r})$ to $O(n^r)$ ($r=2$ in this example). However, this idea can only be applied to $q=2$, and in general, their idea can speed up the computation from $O(n^{qr})$ to $O(n^{(q-1)r})$ for the exact U-statistic. Our monotonically-indexed U-statistic $U_{n,q}^M$ loses a constant ratio of efficiency compared to $U_{n,q}$, but can reduce the computational burden to $O(n^r)$ for any $q$.



 

\begin{table}[H]
\label{tb:kendalltau}
\centering
\begin{tabular}{|c|c|c|c|c|c|c|c|c|c|}
\hline
DGP & $\delta,r$ & $q=2$ & $q=4$ & $q=6$ & $q=2,4$ & $q=2,6$ & $L_2$ & $L_\infty$ & $L_2\&L_\infty$\\ \hline
\multirow{7}{*}{Gaussian} & 0, NA & 5.5 & 5.0 & 5.8 & 6.4 & 7.3 & 5.8 & 4.6 & 5.6 \\ \cline{2-10}
& 0.5, 3 & 38.8 & 96.2 & 95.6 & 96.4 & 95.2 & 43.8 & 98.6 & 97.2 \\ \cline{2-10}
& 0.3, 4 & 25.4 & 42.6 & 34.0 & 45.0 & 38.8 & 29.0 & 39.4 & 41.4 \\ \cline{2-10}
& 0.25, 6 & 43.8 & 47.0 & 30.0 & 56.4 & 48.0 & 51.8 & 37.0 & 53.2 \\ \cline{2-10}
& 0.15, 10 & 48.4 & 22.2 & 11.6 & 48.2 & 45.4 & 53.6 & 14.6 & 50.0 \\ \cline{2-10}
& 0.05, $p$ & 95.4 & 14.4 & 9.0 & 93.6 & 93.8 & 96.2 & 13.0 & 94.6\\ \hline
\multirow{7}{*}{$t_3$} & 0, NA & 4.3 & 4.9 & 4.7 & 5.2 & 5.1 & 5.2 & 2.9 & 4.3 \\ \cline{2-10}
& 0.5, 3  & 54.0 & 99.8 & 99.6 & 99.8 & 99.8 & 60.0 & 100 & 99.8 \\ \cline{2-10}
& 0.3, 4 & 40.4 & 72.0 & 63.2 & 70.2 & 64.2 & 44.2 & 71.2 & 68.4 \\ \cline{2-10}
& 0.25, 6 & 71.2 & 80.0 & 64.4 & 83.6 & 77.4 & 76.2 & 66.8 & 78.4 \\ \cline{2-10}
& 0.15, 10 & 78.6 & 47.6 & 22.6 & 76.0 & 73.0 & 83.2 & 27.8 & 76.8 \\ \cline{2-10}
& 0.05, $p$ & 100 & 30.8 & 16.0 & 100 & 100 & 100 & 23.8 & 100 \\ \hline
\end{tabular}
\caption{Size and power in percentage for Kendall's $\tau$ tests}
\end{table}

 In Table~\ref{tb:kendalltau}, we report the size and power of all Kendall $\tau$ based tests for a range of alternatives with varying degrees of sparsity.
As we can see, the sizes for most tests are accurate, except for the adaptive test with $q=(2,6)$ for Gaussian data and the $L_{\infty}$ method for the $t_3$ case. 
In terms of the power, the proposed statistic with $q=2$ and $L_2$ method in \cite{leung2018testing} are favored for dense alternative, whereas our statistics based on large $q$ or $L_\infty$ method in \cite{han2017distribution} have higher power under sparse alternative. The $L_2$ method slightly outperforms our studentized $U_{n,q}^M$ (with $q=2$), which is expected since the monotonically indexed-U statistic is less (statistically) efficient compared to full $U$ statistic.
The adaptive test based on $q=(2,4)$ seems to have comparable performance with $L_2\&L_{\infty}$ (i.e., a simple combination of $L_2$ and $L_\infty$ based on naive Bonferroni correction), although the asymptotic independence between $L_2$ and $L_{\infty}$ statistics based on Kendall's $\tau$ seems yet established in the literature. In some mildly sparse or mildly dense cases (i.e., $(\delta,r)=(0.3,4), (0.25,6)$), we see that the adaptive test based on $q=(2,4)$ slightly outperforms $L_2\&L_{\infty}$, demonstrating the additional merit by incorporating $L_q$ norm based test, where $q\not=2, \infty$, in forming the adaptive test. 


\subsection{Tests for Linear Model Coefficients}
\label{sec::sim_lr}

In this subsection, we present some simulation results for testing the nullity of linear regression coefficients. Let the data $\{(X_i,Y_i)\}_{i=1}^n$ be i.i.d. samples from linear model $Y_i=X_i\bbeta+\varepsilon_i$ and the goal is to test $\cH_0:\bbeta=\bm{0}$ against $\cH_a:\bbeta\not=\bm{0}$. Recall that the kernel function is of order 2 as in (\ref{lrhl}), with $\bbeta_0=\bm{0}$,
$h_l(W_1,W_2)=(x_{1,l}-x_{2,l})(Y_1-Y_2)/2,$
and the parameter to test equals $\Sigma \bbeta$. The test statistic proposed in \cite{zhong2011tests} is indeed a special case of ours with $q=2$ (hence their kernel is of order $qr$=4), standardized by a different variance estimator.  We set $(n,p)=(100,50),\,(200,100)$, $\bbeta=\delta (\bm{1}_r,\bm{0}_{p-r})$, and generate $X_i\sim N(0,\Sigma)$ independent of $\varepsilon_i\sim N(0,1)$. We consider $\Sigma=(\sigma_{a,b})_{a,b=1}^p$ from AR(1) structure with $\sigma_{a,b}=\rho^{|a-b|}$, and $q=2,4,6$.

In Table~\ref{tb:linearmodel}, we present the results for $U_{n,2}$, $U_{n,q}^M$-based tests (with permutation-based variance estimator) for $q=2,4,6, (2,4)$ and $(2,6)$, Bonferonni type combination of $U_{n,2}$ with $U_{n,q}^M$ for $q=4,6$. 
Computationally speaking, the computational cost for the monotonically indexed  U-statistic $U_{n,q}^M$ and the studentized statistic $T_{n,q}^M$  is $O(n^2)$ for any $q$. The full calculation of $U_{n,2}$ is implemented via the efficient algorithm proposed in \cite{leung2018testing} so the cost is reduced from $O(n^4)$ to $O(n^2)$. In our unreported simulations,  we also compare the performance of $U_{n,2}$ (with permutation-based variance estimator) to that of \cite{zhong2011tests}, 
and find that they lead to very similar results. Note that the computation of the studentized test in \cite{zhong2011tests} requires $O(n^4)$ computation.





\begin{table}[H]
\centering
\begin{tabular}{|c|c|c|c|c|c|c|c|c|c|}
\hline
\multirow{2}{*}{$(n,p)$} & \multirow{2}{*}{$\delta,r,\rho$}  & $U_{n,q}$ & \multicolumn{5}{c|}{$U_{n,q}^M$} &\multicolumn{2}{c|}{$U_{n,2}$ \& $U_{n,q}^M$}
\\ \cline{3-10}
& & \multicolumn{2}{c|}{$q=2$} & $q=4$ & $q=6$ & $q=2,4$ & $q=2,6$ & $q=4$ & $q=6$  \\ \hline
\multirow{6}{*}{(100,50)} & 0, NA, 0 & 6.3 & 5.7 & 6.4 & 3.1 & 6.3 & 4.6 & 7.2 & 5.7 \\ \cline{2-10}
& 0, NA, 0.5 & 7.0 & 6.5 & 4.6 & 4.1 & 6.5 & 5.7 & 7.0 & 6.4
\\ \cline{2-10}
& 0.25, 2, 0 & 29.2 & 21.2 & 28.4 & 17.8 & 31.2 & 25.4 & 34.4 & 29.4 
\\ \cline{2-10}
& 0.25, 2, 0.5 & 60.4 & 45.6 & 62.6 & 49.6 & 65.2 & 57.0 & 69.6 & 62.8
\\ \cline{2-10}
& 0.05, $p$, 0 & 31.2 & 24.8 & 8.0 & 6.8 & 22.8 & 22.2 & 27.8 & 26.4 \\ \cline{2-10}
& 0.05, $p$, 0.5 & 98.0 & 92.2 & 45.6 & 19.6 & 89.6 & 89.6 & 96.8 & 97.0 \\ \hline
\multirow{6}{*}{(200,100)} & 0, NA, 0 & 5.7 & 5.5 & 4.9 & 3.5 & 5.1 & 5.6 & 5.2 & 5.3 \\ \cline{2-10}
& 0, NA, 0.5 & 6.6 & 5.9 & 3.9 & 2.6 & 6.3 & 5.1  & 6.8 & 5.5 \\ \cline{2-10}
& 0.25, 2, 0 & 45.2 & 32.4 & 56.8 & 46.8 & 58.2 & 52.6 & 63.8 & 57.8 \\ \cline{2-10}
& 0.25, 2, 0.5 & 81.6 & 62.2 & 93.6 & 90.4 & 92.8 & 92.0 & 94.6 & 93.8 \\ \cline{2-10}
& 0.035, $p$, 0 & 42.4 & 30.4 & 10.2 & 7.6 & 27.8 & 28.0 & 36.2 & 36.6
\\ \cline{2-10}
& 0.035, $p$, 0.5 & 100 & 99.4 & 57.6 & 28.6 & 99.4 & 99.4 & 100 & 100\\  \hline
\end{tabular}
\caption{Size and power in percentage for linear model coefficients  testing}
\label{tb:linearmodel}
\end{table}

As we can see from Table~\ref{tb:linearmodel}, all tests appear to have quite reasonable size, and there are some mild size distortion in some cases. As expected, large $q$ has higher power against sparse alternative while small $q$ outperforms under dense alternative. Comparing $U_{n,2}$ with $U_{n,2}^M$, we see a noticeable power loss for $U_{n,2}^M$. This is not surprising since the monotonically indexed U-statistic tends to lose some statistical efficiency. In the event that the computational complexity of $L_2$-norm test statistic can be made to $O(n^2)$, we recommend to calculate the full U-statistic, i.e., $U_{n,2}$ and combine with $U_{n,q}^M$ for $q=4,6$. For this example, the adaptive test based on $U_{n,2}$ and $U_{n,4}^{M}$ appears  to have the best power or close to the best single-$q$ test in power. 



\subsection{Two-Sample Tests for Spatial Sign}
\label{sec::sim_ts_spatial}

In this subsection, we study the size and power property for two-sample spatial sign test developed by \citep{chakraborty2017tests} and compare with its $L_q$-norm based extension and adaptive tests.
The first sample is generated as $\{X_i\}_{i=1}^n=\{Z_i+\mu\}_{i=1}^n$ and the second sample $\{Y_j\}_{j=1}^m$, where  we set $\mu=\delta(\bm{1}_{r}^T,\bm{0}_{p-r}^T)^T,$ and generate $\{Y_i\}$ and $\{Z_j\}$ in the following way:
\begin{itemize}
    \item Standard Gaussian: $Y_i,Z_j\iid N(0,I_p);~i=1,\ldots,n;\,j=1,\ldots,m$.
    \item $t$-distribution: $Y_{il},Z_{jl}\iid  t_3;~i=1,\ldots,n;\,j=1,\ldots,m;\,l=1,\ldots,p$.
\end{itemize}
We set $n=m=50,\,p=100$. 

\begin{table}[H]
\centering
\begin{tabular}{|c|c|c|c|c|c|c|}
\hline
DGP & $\delta,r$ & $q=2$ & $q=4$ & $q=6$ & $q=2,4$ & $q=2,6$\\ \hline
\multirow{3}{*}{Gaussian} & 0, NA & 2.9 & 6.2 & 6.8 & 4.2 & 5.2\\ \cline{2-7}
& 1, 2  & 17.2 & 94.2 & 91.0 & 91.2 & 90.8\\ \cline{2-7}
& 0.3, $p$ & 96.4 & 69.2 & 37.2 & 95.6 & 96.2 \\  \hline
\multirow{3}{*}{$t_3$} & 0, NA & 2.4 & 5.0 & 5.6 & 4.2 & 4.7\\ \cline{2-7}
& 1, 2 & 3.8 & 37.8 & 41.4 & 34.0 & 38.6\\ \cline{2-7}
& 0.5, $p$ & 98.3 & 76.6 & 45.6 & 97.4 & 97.5\\  \hline
\end{tabular}
\caption{Size and power in percentage for two-sample spatial sign test}
\label{tb:spatialsign2}
\end{table}
According to Table~\ref{tb:spatialsign2},  most tests have relatively accurate size, except for the case $q=2$, which exhibits  slightly conservative size for two-sample spatial sign test. This is opposite to the findings in one-sample test where large $q$ tends to be conservative. As is the case for one-sample spatial sign test, small and large $q$ correspond to high power against dense and sparse alternatives, respectively. Again, the adaptive test based on $(2,4)$ or $(2,6)$ combines the advantages of single $q$ tests  and has comparable power to the best single-$q$ test. 

\section{Real Data Application}
\label{sec:data}

In this section, we apply the two-sample spatial sign based test (with permutation) to the Connectionist Bench dataset publicly available at machine learning UCI website (\url{http:
//www.cs.ucr.edu/eamonn/time_series_data}).  The dataset we examined contains 111 samples of sonar signals from metal cylinder and 97 samples from rocks. We follow the approach in \citep{chakraborty2017tests} to study the size and power of the proposed two sample test with different $q$s. For the size simulation, we randomly generate two subsamples with sample size 40 from the rock sample, and calculate the average rejection rate. In terms of the power, we generate two subsamples also with sample size 40 from metal and rock samples, and find the average rejection probability from 1000 repetitions. The results are summarized in Table \ref{tb:real}.
\label{sec:apply}
\begin{table}[H]
\centering
\begin{tabular}{|c|c|c|c|c|c|}
\hline
Statistic & $q=2$ & $q=4$ & $q=6$ & $q=2,4$ & $q=2,6$\\ \hline
Size  & 2.1 & 5.9 & 6.2 & 4.1 & 5.5\\ \hline
Power & 22.2 & 48.7 & 30.1 & 42.6 & 33.2\\ \hline
\end{tabular}
\caption{Size and power in percentage for a real data application}
\label{tb:real}
\end{table}
As we can see from the table, $q=2$ exhibits conservative size while all other tests have relatively accurate size. As regards the power, $q=4$ has highest power followed by $q=6$, which outperforms $q=2$, implying that the underlying alternative is better captured with $q=4$ than $q=2$. With adaptive test combining $q=2,4$ (or $2,6$), we can obtain higher power than the single $q$ based statistic with $q=2$ as studied in \cite{chakraborty2017tests}. This clearly demonstrates the value of 
the adaptive tests we developed. In practice, it would be informative to determine which $q$ is more powerful to the data at hand, and which pair of $q$s to use in adaptive combination. We leave this topic to future research.

\section{Conclusion}
\label{sec:conclude}
In this paper, we propose a family of $L_q$-norm based U-statistics for high-dimensional i.i.d. data and show that they are asymptotically normal and independent for different $q$s under mild moment and cumulant assumptions. Our work is inspired by \cite{he2021asymptotically}, who first demonstrated the asymptotic normality and independence of $L_q$-norm based U-statistics for testing of mean vectors and covariance matrices. Our results are obtained for a broad class of kernel functions, and can be viewed as a significant extension of those in \cite{he2021asymptotically}, which is limited to kernel function of order $1$. 
In particular, we highlight the usefulness of our results in both theory and simulations for  several global testing problems that are not covered by \cite{he2021asymptotically}, including one sample spatial sign based test, simultaneous testing of linear model coefficients, component-wise independent tests via Kendall's $\tau$, and two sample spatial sign based test. 

To capture both sparse and dense alternatives, we develop a simple Bonferonni type combination of two studentized $L_q$-norm based test statistics via their p-values, and show that it leads to an adaptive test with high power against both dense and sparse alternatives. To alleviate the computational burden, we propose a variant of symmetric U-statistic by focusing on monotone indices in the summation, to which the idea of dynamic programming can be applied. With some moderate loss of statistical efficiency, we are able to reduce the computation cost from $O(n^{qr})$, which corresponds to full symmetric U-statistic,  to $O(n^{r})$, where $r$ is the order of the kernel.
According to the simulation results, it appears that a combination of the existing $L_2$ norm based test with monotonically indexed $L_q$ norm based test ($q=4$ or $6$) achieves satisfactory power against both sparse and dense alternatives with the computational cost of order $O(n^2)$. 


To conclude, we mention a few worthy future directions.  It would be interesting to generalize the asymptotic theory to high-dimensional time series, as many of the global testing problems we tackled here can be naturally posed for high-dimensional time series data. We expect that this extension will be nontrivial since temporal dependence can bring non-negligible impact on the validity of normal approximation for $L_q$-norm based test statistic.  
Moreover, it might also be interesting to extend our methodology and theory to change point detection problem, which is similar to but more involved than a two sample testing problem due to the unknown change-point locations; see \cite{zhang2021adaptive} for the use of $L_q$-norm based test statistics for the mean change-point testing in high-dimensional data.  Finally, estimation of asymptotic variance is an important problem for inference and yet to be addressed in the high-dimensional setting. We conjecture that Bootstrap can work but may deserve some caution due to the  high-dimensionality of observations, and a rigorous theory is needed. We leave these open problems for future study.

\begin{supplement}
	\stitle{Supplement to ``Adaptive Testing for High-dimensional Data"}
	\sdescription{The supplementary material contains all the proofs for theoretical results stated in the paper.}
\end{supplement}

\bibliographystyle{apalike}
\bibliography{ref}

\begin{thebibliography}{}

\bibitem[Andrews, 1991]{andrews1991heteroskedasticity}
Andrews, D.~W. (1991).
\newblock Heteroskedasticity and autocorrelation consistent covariance matrix
  estimation.
\newblock {\em Econometrica: Journal of the Econometric Society}, pages
  817--858.

\bibitem[Arcones and Gine, 1992]{arcones1992bootstrap}
Arcones, M.~A. and Gine, E. (1992).
\newblock On the bootstrap of {U} and {V} statistics.
\newblock {\em The Annals of Statistics}, 20(2):655--674.

\bibitem[Bai et~al., 2009]{bai2009corrections}
Bai, Z., Jiang, D., Yao, J.-F., and Zheng, S. (2009).
\newblock Corrections to lrt on large-dimensional covariance matrix by rmt.
\newblock {\em The Annals of Statistics}, 37(6B):3822--3840.

\bibitem[Bai and Saranadasa, 1996]{bai1996effect}
Bai, Z. and Saranadasa, H. (1996).
\newblock Effect of high dimension: by an example of a two sample problem.
\newblock {\em Statistica Sinica}, 6(2):311--329.

\bibitem[Cai et~al., 2013]{cai2013two}
Cai, T., Liu, W., and Xia, Y. (2013).
\newblock Two-sample covariance matrix testing and support recovery in
  high-dimensional and sparse settings.
\newblock {\em Journal of the American Statistical Association},
  108(501):265--277.

\bibitem[Cai and Jiang, 2011]{cai2011limiting}
Cai, T.~T. and Jiang, T. (2011).
\newblock Limiting laws of coherence of random matrices with applications to
  testing covariance structure and construction of compressed sensing matrices.
\newblock {\em The Annals of Statistics}, 39(3):1496--1525.

\bibitem[Cai et~al., 2014]{cai2014two}
Cai, T.~T., Liu, W., and Xia, Y. (2014).
\newblock Two-sample test of high dimensional means under dependence.
\newblock {\em Journal of the Royal Statistical Society: Series B: Statistical
  Methodology}, 76(2):349--372.

\bibitem[Chakraborty and Chaudhuri, 2017]{chakraborty2017tests}
Chakraborty, A. and Chaudhuri, P. (2017).
\newblock Tests for high-dimensional data based on means, spatial signs and
  spatial ranks.
\newblock {\em The Annals of Statistics}, 45(2):771--799.

\bibitem[Chen and Qin, 2010]{chenqin2010}
Chen, S.~X. and Qin, Y.-L. (2010).
\newblock A two-sample test for high-dimensional data with applications to
  gene-set testing.
\newblock {\em The Annals of Statistics}, 38(2):808--835.

\bibitem[Chen et~al., 2010]{chen2010tests}
Chen, S.~X., Zhang, L.-X., and Zhong, P.-S. (2010).
\newblock Tests for high-dimensional covariance matrices.
\newblock {\em Journal of the American Statistical Association},
  105(490):810--819.

\bibitem[Drton et~al., 2020]{drton2020high}
Drton, M., Han, F., and Shi, H. (2020).
\newblock High-dimensional consistent independence testing with maxima of rank
  correlations.
\newblock {\em The Annals of Statistics}, 48(6):3206--3227.

\bibitem[El~Karoui, 2009]{el2009concentration}
El~Karoui, N. (2009).
\newblock Concentration of measure and spectra of random matrices: Applications
  to correlation matrices, elliptical distributions and beyond.
\newblock {\em The Annals of Applied Probability}, 19(6):2362--2405.

\bibitem[Goeman et~al., 2006]{goeman2006testing}
Goeman, J.~J., Van De~Geer, S.~A., and Van~Houwelingen, H.~C. (2006).
\newblock Testing against a high dimensional alternative.
\newblock {\em Journal of the Royal Statistical Society: Series B (Statistical
  Methodology)}, 68(3):477--493.

\bibitem[Gregory et~al., 2015]{gregory2015two}
Gregory, K.~B., Carroll, R.~J., Baladandayuthapani, V., and Lahiri, S.~N.
  (2015).
\newblock A two-sample test for equality of means in high dimension.
\newblock {\em Journal of the American Statistical Association},
  110(510):837--849.

\bibitem[Hall and Jin, 2010]{hall2010innovated}
Hall, P. and Jin, J. (2010).
\newblock Innovated higher criticism for detecting sparse signals in correlated
  noise.
\newblock {\em The Annals of Statistics}, 38(3):1686--1732.

\bibitem[Han et~al., 2017]{han2017distribution}
Han, F., Chen, S., and Liu, H. (2017).
\newblock Distribution-free tests of independence in high dimensions.
\newblock {\em Biometrika}, 104(4):813--828.

\bibitem[He et~al., 2021]{he2021asymptotically}
He, Y., Xu, G., Wu, C., and Pan, W. (2021).
\newblock Asymptotically independent u-statistics in high-dimensional testing.
\newblock {\em The Annals of Statistics}, 49(1):154--181.

\bibitem[Jiang, 2004]{jiang2004asymptotic}
Jiang, T. (2004).
\newblock The asymptotic distributions of the largest entries of sample
  correlation matrices.
\newblock {\em The Annals of Applied Probability}, 14(2):865--880.

\bibitem[Ledoit and Wolf, 2002]{ledoit2002some}
Ledoit, O. and Wolf, M. (2002).
\newblock Some hypothesis tests for the covariance matrix when the dimension is
  large compared to the sample size.
\newblock {\em The Annals of Statistics}, 30(4):1081--1102.

\bibitem[Leung and Drton, 2018]{leung2018testing}
Leung, D. and Drton, M. (2018).
\newblock Testing independence in high dimensions with sums of rank
  correlations.
\newblock {\em The Annals of Statistics}, 46(1):280--307.

\bibitem[Li and Chen, 2012]{li2012two}
Li, J. and Chen, S.~X. (2012).
\newblock Two sample tests for high-dimensional covariance matrices.
\newblock {\em The Annals of Statistics}, 40(2):908--940.

\bibitem[Liu et~al., 2008]{liu2008asymptotic}
Liu, W.-D., Lin, Z., and Shao, Q.-M. (2008).
\newblock The asymptotic distribution and berry--esseen bound of a new test for
  independence in high dimension with an application to stochastic
  optimization.
\newblock {\em The Annals of Applied Probability}, 18(6):2337--2366.

\bibitem[Shao and Zhou, 2014]{shao2014necessary}
Shao, Q.-M. and Zhou, W.-X. (2014).
\newblock Necessary and sufficient conditions for the asymptotic distributions
  of coherence of ultra-high dimensional random matrices.
\newblock {\em The Annals of Probability}, 42(2):623--648.

\bibitem[Shao and Wu, 2007]{shao2007local}
Shao, X. and Wu, W.~B. (2007).
\newblock Local whittle estimation of fractional integration for nonlinear
  processes.
\newblock {\em Econometric Theory}, 23(5):899--929.

\bibitem[Srivastava and Du, 2008]{srivastava2008test}
Srivastava, M.~S. and Du, M. (2008).
\newblock A test for the mean vector with fewer observations than the
  dimension.
\newblock {\em Journal of Multivariate Analysis}, 99(3):386--402.

\bibitem[Srivastava et~al., 2016]{srivastava2016raptt}
Srivastava, R., Li, P., and Ruppert, D. (2016).
\newblock Raptt: An exact two-sample test in high dimensions using random
  projections.
\newblock {\em Journal of Computational and Graphical Statistics},
  25(3):954--970.

\bibitem[Wang et~al., 2015]{wang2015high}
Wang, L., Peng, B., and Li, R. (2015).
\newblock A high-dimensional nonparametric multivariate test for mean vector.
\newblock {\em Journal of the American Statistical Association},
  110(512):1658--1669.

\bibitem[Wang et~al., 2022]{wang2022inference}
Wang, R., Zhu, C., Volgushev, S., and Shao, X. (2022).
\newblock Inference for change points in high-dimensional data via
  selfnormalization.
\newblock {\em The Annals of Statistics}, 50(2):781--806.

\bibitem[Wu et~al., 2019]{wu2019adaptive}
Wu, C., Xu, G., and Pan, W. (2019).
\newblock An adaptive test on high-dimensional parameters in generalized linear
  models.
\newblock {\em Statistica Sinica}, 29(4):2163--2186.

\bibitem[Wu, 2005]{wu2005nonlinear}
Wu, W.~B. (2005).
\newblock Nonlinear system theory: Another look at dependence.
\newblock {\em Proceedings of the National Academy of Sciences},
  102(40):14150--14154.

\bibitem[Wu and Shao, 2004]{wu2004limit}
Wu, W.~B. and Shao, X. (2004).
\newblock Limit theorems for iterated random functions.
\newblock {\em Journal of Applied Probability}, 41(2):425--436.

\bibitem[Xu et~al., 2016]{xu2016adaptive}
Xu, G., Lin, L., Wei, P., and Pan, W. (2016).
\newblock An adaptive two-sample test for high-dimensional means.
\newblock {\em Biometrika}, 103(3):609--624.

\bibitem[Zhang et~al., 2021]{zhang2021adaptive}
Zhang, Y., Wang, R., and Shao, X. (2021).
\newblock Adaptive inference for change points in high-dimensional data.
\newblock {\em Journal of the American Statistical Association}, In press.

\bibitem[Zhong and Chen, 2011]{zhong2011tests}
Zhong, P.-S. and Chen, S.~X. (2011).
\newblock Tests for high-dimensional regression coefficients with factorial
  designs.
\newblock {\em Journal of the American Statistical Association},
  106(493):260--274.

\bibitem[Zhurbenko and Zuev, 1975]{zhurbenko1975higher}
Zhurbenko, I. and Zuev, N. (1975).
\newblock On higher spectral densities of stationary processes with mixing.
\newblock {\em Ukrainian Mathematical Journal}, 27(4):364--373.

\end{thebibliography}

\newpage
\appendix
\bigskip
\begin{center}
{\large\bf SUPPLEMENT TO ``ADAPTIVE TESTING FOR HIGH-DIMENSIONAL DATA''}
\end{center}

\begin{center}
{BY YANGFAN ZHANG, RUNMIN WANG AND XIAOFENG SHAO}
\end{center}
The supplement is organized as follows. In Appendix \ref{sec:verify}, we verify the assumptions for the main theory for several testing problems we focus on. Appendix \ref{sec:tech} includes all technical proofs.

\section{Verification of Assumptions}
\label{sec:verify}
In this section, we verify the assumptions on different testing problems. We first summarize the assumptions we need to check. Assumption \ref{ass::lead_hoeff} is trivial if the order of the kernels is one ($r=1$), or more generally, if the kernel is fully degenerate ($r=s$). Assumption \ref{ass::asy_deg} (b2) is necessary only for degenerate kernels ($s>1$). 

\subsection{Mean testing and covariance testing}
\label{sec:verify_mean}
These two examples have kernels with order $r=1$, so we only meed to check Assumption \ref{ass::asy_deg} (a) and (b1). They are implied by Assumption \ref{ass::cumr} and the proof can be found in \cite{wang2022inference} for mean testing, with $C_p=c_0$ for some constant $c_0$. Therefore, our theory applies if $X_i$ satisfies Assumption \ref{ass::cumr}. In terms of covariance testing, it suffices to view $(x_{i,p_1}x_{i,p_2})_{(p_1,p_2)\in\cL}$ as i.i.d. random vectors in $\R^{|\cL|}$ and verify Assumption \ref{ass::cumr} for them.

\subsection{Spatial sign testing}
\label{sec:verify_spatial}
For spatial sign, recall $h_l(X_i)=x_{i,l}/\|X_i\|$ and $H_{l,q}(X_1,\ldots,X_q)=h_l(X_1)\cdots h_l(X_q)$. It is technically difficult to verify Assumption \ref{ass::cumr} for $h_l$ directly. Instead, we show the martingale CLT for $D_{n,q}$. As we need to go through the proof of the main theorem in Appendix \ref{sec:tech}, we only highlight the new argument required for spatial sign. Recall that the associated martingale difference sequence (with proper normalization) is given by
\begin{align*}
 \xi_{n,i}=&\alpha_1 q!n^{-q/2}\tsig^{-1/2}_1(q)\sum_{l\in\cL}\sum_{1\le i_1<\cdots<i_{q-1}< i}H_{l,q}^{(t)}(X_{i_1},\ldots, X_{i_{q-1}},X_i)\\
&+\alpha_2Q!n^{-Q/2}\tsig^{-1/2}_1(Q)\sum_{l\in\cL}\sum_{1\le i_1<\cdots<i_{Q-1}< i}H_{l,Q}^{(t)}(X_{i_1},\ldots, X_{i_{Q-1}},X_i)\\
\deltaeq&\alpha_1q!\xi_{n,i}^{(1)}+\alpha_2Q!\xi_{n,i}^{(2)}.
\end{align*}

Following \cite{wang2015high}, we consider elliptically distributed random vectors $X_i$ with
\begin{equation}
X_i=\mu+\varepsilon_i,\quad\varepsilon_i=\Gamma R_iU_i,
\label{eq::elldist}
\end{equation}
where $\Gamma\in\R^{p\times p}$, $U_i$ is uniformly distributed on the unit sphere in $\R^p$ and $R_i\ge 0$ is some scale random variable independent with $U_i$. Denote $\Omega=\Gamma\Gamma^T$, then the covariance matrix $\Sigma=p^{-1}D\Omega$ with $D\deltaeq\E[R_i^2]$. Denote $W_i=\Gamma U_i=(w_{i,1},\ldots,w_{i,p})^T$ and we have $X_i=R_iW_i$ so $\E[W_i]=\E[X_i]=\bz$ under the null .

Consider the event $A_i=\left\{ 
\|W_i\|^2\ge \frac{\tr(\Omega)}{2p}\right\}$.  From Lemma 3 in \cite{wang2015high}, we have $$P(A_i)\ge 1-c_1\exp\left\{\frac{\tr^2(\Omega)}{-128p\lambda_{\max}^2(\Omega)}\right\},$$ 
with $c_1=2\exp{(\pi/2)}$ being a constant.

We make the following assumption similar to Assumption (C2) in \cite{wang2015high}, so that we may apply the concentration inequality developed by \citep{el2009concentration}.
\begin{assumption}
Assume the random vectors $X_i$ follow elliptical distribution as \eqref{eq::elldist}, and
\begin{equation}
p^{2q-2}\exp\left\{\frac{\tr^2(\Omega)}{-128p\lambda_{\max}^2(\Omega)}\right\}=O(1).
\label{eq::concen_bound}
\end{equation}
\label{ass::concen}
\end{assumption}
The assumption is mildly stronger than \cite{wang2015high} and can be verified similar to the argument therein, since we expect the exponential term to decay to 0 at a fast rate. To be more precise, let $\lambda_1\le\lambda_2\le\ldots\le\lambda_p$ denote the eigenvalues of $\Sigma$. Assume there are $k_1$ eigenvalues decaying to 0 and $k_2$ ones diverging to $\infty$ at rate $p^a$ for some $a>0$. The remaining $(p-k_1-k_2)$ eigenvalues are assumed to be bounded below by some $b>0$. We assume $k_1$ and $k_2$ are bounded and recall that we consider high-dimensional setting where $p\rightarrow\infty$.
We have 
$$
\frac{\tr(\Sigma)}{\sqrt{p}\lambda_{\max}(\Sigma)}\ge \frac{b(p-k_1-k_2)}{\sqrt{p}\lambda_p}.
$$
Therefore, the exponential term decays to 0 faster than $p^{2-2q}$ as long as $a<1/2$.
The assumptions can also be satisfied with extra restrictions even if $k_1$ and $k_2$ also diverge to $\infty$. The second statement essentially assumes that $R_i$ has bounded fourth moment after proper standardization. 

To apply the martingale CLT (Theorem 35.12 in Billingsley (2008)), we need to verify the following two conditions.\\
$\displaystyle(i)\quad\forall\varepsilon>0,\sum_{i=1}^n\mathbb{E}\left[{\xi}_{n,i}^{2} \textbf{1}\left\{\left|{\xi}_{n,i}\right|>\varepsilon\right\} \Big| \mathcal{F}_{i-1}\right] \stackrel{p}{\rightarrow} 0$.\\
$\displaystyle(ii)\quad V_n=\sum_{i=1}^{ n}\mathbb{E}\left[{\xi}_{n,i}^{2}|\F_{i-1}\right] \stackrel{p}{\rightarrow} \alpha_1^2(qr)!+\alpha_2^2(Qr)!$.

The key result required for (i) (ii) is equation (14) or Lemma 7.2 (i), i.e.,
\begin{equation}
\sum_{l_1,\ldots,l_4 \in\cL}\E\left[\prod_{u=1}^4 h_{l_u}(X_{i_1^{(u)}})\cdots h_{l_u}(X_{i_q^{(u)}})\right]=\sum_{l_1,\ldots,l_4 \in\cL}\E\left[\prod_{u=1}^4 H_{l_u}(X_{i_1^{(u)}},\cdots,X_{i_q^{(u)}})\right]\le \tsig_1^2(q).
\label{eq:spatial_prod}
\end{equation}
Consider the event $A_q=\left\{ 
\|W_i\|^2\ge \frac{\tr(\Omega)}{2p},\,i=1,\ldots,q.\right\}$.  From Lemma 3 in \cite{wang2015high}, we have $$P(A_q)\ge 1-qc_1\exp\left\{\frac{\tr^2(\Omega)}{-128p\lambda_{\max}^2(\Omega)}\right\},$$ 
with $c_1=2\exp{(\pi/2)}$ being a constant.

Note that we have
\begin{align*}
&\sum_{l_1,\ldots,l_4 \in\cL}\E\left[\prod_{u=1}^4 H_{l_u}(X_{i_1^{(u)}},\cdots,X_{i_q^{(u)}})\right]\\
\le&\E\left[\left(\sum_{l\in\cL}H_l(X_1,\cdots,X_q)\right)^4\right]\\
\le & \E\left[\left(\frac{\sum_{l\in\cL}w_{1,l}\cdots w_{q,l}}{\|W_1\|\cdots\|W_q\|}\right)^4(1_{A_q}+1_{A_q^c})\right]\\
\le & (\tr(\Omega)/2p)^{-2q}\E\left[\left(\sum_{l\in\cL}w_{1,l}\cdots w_{q,l}\right)^4\right]+P(A_q^c)\quad (q\ge 2).\\
\le & (\tr(\Omega)/2p)^{-2q}(\E[R_1^4])^{-q}\E\left[\left(\sum_{l\in\cL}x_{1,l}\cdots x_{q,l}\right)^4\right]+P(A_q^c)\quad (x_{i,l}=R_iw_{i,l}).\\
\lesssim&(D\tr(\Omega)/2p)^{-2q}\E\left[\left(\sum_{l\in\cL}x_{1,l}\cdots x_{q,l}\right)^4\right]+P(A_q^c)\quad (\E[R_i^4]\ge \E^2[R_i^2]=D^2)\\
\lesssim&\tr^{-2q}(\Sigma)\E\left[\left(\sum_{l\in\cL}x_{1,l}\cdots x_{q,l}\right)^4\right]+P(A_q^c)\quad (\Sigma=p^{-1}D\Omega)
\end{align*}
Note that we have (from the verification for mean-testing)
$$
\E\left[\left(\sum_{l\in\cL}x_{1,l}\cdots x_{q,l}\right)^4\right]\lesssim \|\Sigma\|_q^{2q},
$$
is of order $p^2$ at most which along with the fact that $\tr(\Sigma)$ is of order at least $p$ implies 
$$
\tr^{-2q}(\Sigma)\E\left[\left(\sum_{l\in\cL}x_{1,l}\cdots x_{q,l}\right)^4\right]\lesssim p^{2-2q}.
$$
As regards $\tsig_1(q)$, we have
\begin{align*}
\tsig_1^{1/q}(q)=&\left(\sum_{l_1,l_2\in\cL}\E^q\left[\frac{x_{1,l_1}x_{1,l_2}}{\|X_1\|^2}\right]\right)^{1/q}\ge\left(\sum_{l\in\cL}\E^q\left[\frac{x_{1,l}^2}{\|X_1\|^2}\right]\right)^{1/q}\\
=&p^{1/q-1}\left(\sum_{l\in\cL}\E^q\left[\frac{x_{1,l}^2}{\|X_1\|^2}\right]\right)^{1/q}\left(\sum_{l\in\cL}1^{q/(q-1)}\right)^{1-1/q}\\
\ge& p^{1/q-1}\sum_{l\in\cL}\E\left[\frac{x_{1,l}^2}{\|X_1\|^2}\right]=p^{1/q-1}.
\end{align*}
Therefore, we conclude that $\tsig_1(q)\ge p^{1-q}$. To conclude \eqref{eq:spatial_prod}, it remains to note that \eqref{eq::concen_bound} implies $P(A_q^c)\lesssim p^{2-2q}$. The proof is then complete.

\subsection{Testing the nullity of linear regression coefficients}
\label{sec:verify_linear}
Denote $W_i=(X_i,Y_i)$. Without loss of generality, we assume $\E[X]=0$ and let $\Sigma=(\Sigma_{l_1,l_2})_{l_1,l_2=1}^p\deltaeq\var(X)$. Under the null, we have 
$$h_l(W_{[2]})=(\varepsilon_1-\varepsilon_2)(x_{1,l}-x_{2,l}),\quad l=1,\ldots,p,$$
where we have dropped the factor 1/2 to ease the notations.
Therefore,
$$
h_l^{(1)}(W_1)=\E[h_l(W_{[2]})|W_1]=\varepsilon_1x_{1,l},
$$
and 
$$h_l^{(2)}(W_{[2]})=h_l(W_{[2]})-h_l^{(1)}(W_1)-h_l^{(1)}(W_2)=-(\varepsilon_1x_{2,l}+\varepsilon_2x_{1,l}).$$
Since the kernel is not degenerate, it suffices to check Assumption \ref{ass::lead_hoeff} and \ref{ass::asy_deg} (a,b1). However, by defining i.i.d. zero-mean random variables $Y_i\deltaeq\varepsilon_iX_i$ such that $\var(Y)=\sigma^2\cdot \Sigma$ with $\sigma^2\deltaeq\var(\varepsilon_1)$, we can see that $h_l^{(1)}(W_i)=y_{i,l}$. Therefore, Assumption \ref{ass::asy_deg} (a) and (b1) are satisfied if Assumption \ref{ass::cumr} holds for i.i.d. random vectors $\varepsilon_iX_i$.

Next we check Assumption \ref{ass::lead_hoeff}, i.e. $\tsig_2(q)=o(n^q\tsig_1(q))$, or equivalently, $\Sigma_2(q)=o(n^q\Sigma_1(q))$. Note that 
\begin{align*}
\sigma_1(l_1,l_2)=&\E[\varepsilon_1x_{1,l_1}\varepsilon_1x_{1,l_2}]=\sigma^2\Sigma_{l_1,l_2},\\
\sigma_2(l_1,l_2)=&\E[(\varepsilon_1x_{2,l_1}+\varepsilon_2x_{1,l_1})(\varepsilon_1x_{2,l_2}+\varepsilon_2x_{1,l_2})]=2\sigma^2\Sigma_{l_1,l_2}=2\sigma_1(l_1,l_2).
\end{align*}
Therefore, we have $\Sigma_2(q)=2^q\Sigma_1(q)$ and hence Assumption \ref{ass::lead_hoeff} holds.

\subsection{Component-wise independence testing with Kendall's \texorpdfstring{$\tau$}{LG}}
\label{sec:verify_kendall}
We first verify Assumption \ref{ass::lead_hoeff} by showing $\Sigma_2(q)=o(n^q\Sigma_1(q))$. Recall $\cL=\{l=(d,d'),1\le d< d'\le p\}$ in this example, and we denote $L=|\cL|=p(p-1)/2$. For $l=(d,d')$, as 
$$
h_l(X_{[2]})=\sgn(x_{1,d}-x_{2,d})\sgn(x_{1,d}-x_{2,d'}),
$$
we have under the null with component-wise independence,
\begin{align*}
h_l^{(1)}(X_1)=&\E[h_l(X_{[2]})|X_1]=(2F_d(x_{1,d})-1)(2F_{d'}(x_{1,d'})-1),\\
h_l^{(2)}(X_{[2]})=&h_l(X_{[2]})-h_l^{(1)}(X_1)-h_l^{(1)}(X_2),
\end{align*}
where we have used $F_d$ to denote the cdf of $x_{1,d}$. We assume $X_i$ has continuous distributions. Therefore, $U_d\deltaeq F_d(x_{1,d})\sim Unif(0,1)$, and they are independent under the null. We have
\begin{align*}
\sigma_1(l_1,l_2)=&\E[h_{l_1}^{(1)}(X_1)h_{l_2}^{(1)}(X_1)]=\E[(2U_{d_1}-1)(2U_{d_1'}-1)(2U_{d_2}-1)(2U_{d_2'}-1)]\\
=&I(l_1=l_2)\cdot \E[(2U_{d_1}-1)^2]\E[(2U_{d_1'}-1)^2]=\frac{1}{9}\cdot I(l_1=l_2),\\
\sigma_2(l_1,l_2)=&\E[h_{l_1}(X_{[2]})h_{l_1}(X_{[2]})]-2\E[h_{l_1}^{(1)}(X_1)h_{l_2}^{(1)}(X_1)]\\
=&I(l_1=l_2)-\frac{2}{9}I(l_1=l_2)=\frac{7}{9}I(l_1=l_2).
\end{align*}
Therefore, we have $\Sigma_2(q)=7^q\Sigma_1(q)=(7/9)^qL$ and hence Assumption \ref{ass::lead_hoeff} holds for $h_l^{(1)}$.

Since the kernel is not degenerate, it remains to check  Assumption \ref{ass::asy_deg} (a) and (b1).

For (a), we have 
\begin{align*}
\sum_{l_1,l_2,l_3,l_4\in\cL}[\sigma_1(l_1,l_2)\sigma_1(l_3,l_4)\sigma_1(l_1,l_4)\sigma_1(l_2,l_3)]^{q/2}=\left(\frac{7}{9}\right)^{2q}L.
\end{align*}
It is indeed $o(\Sigma_2^2(q))$ as $p\rightarrow\infty$ and as $\Sigma_2^2(q)=(7/9)^{2q}L^2$.

To simplify notations, we denote $h_{l_h}^{(1)}(X_1)$ by $Z_{l_h}$. For (b1), we have
\begin{align*}
\cum(Z_{l_1},\ldots,Z_{l_4})=\E[Z_{l_1}Z_{l_2}Z_{l_3}Z_{l_4}]-\E[Z_{l_1}Z_{l_2}]\E[Z_{l_3}Z_{l_4}]-\E[Z_{l_1}Z_{l_3}]\E[Z_{l_2}Z_{l_4}]-\E[Z_{l_1}Z_{l_4}]\E[Z_{l_2}Z_{l_3}].
\end{align*}
It is straightforward to see that each term in RHS is bounded by a constant. Moreover, if there is at least one index $l_h$ such that $d_h$ or $d_h'$ is not equal to any other, then RHS is zero since $U_d$'s are independent. Therefore, for a term to be non-zero, we must have each $d_h$ and $d_h'$ appear at least twice in $\{l_1,l_2,l_3,l_4\}=\{d_1,d_1',\ldots,d_4,d_4'\}$, which implies that the summation $\sum_{l_1,\ldots,l_4\in\cL}=\sum_{1\le d_1<d_1'\le p,\ldots,1\le d_4<d_4'\le p}$ contains at most $O(p^4)$ terms. 
We conclude that
\begin{align*}
\sum_{l_1,l_2,l_3,l_4\in\cL}\cum({h_{l_1}^{(1)}(X_1),\ldots,h_{l_4}^{(1)}(X_1)})=O(p^4)=O(L^2)=O(\Sigma_2^2(q)),
\end{align*}
and (b1) holds.

\section{Technical Proofs of the Theorems}
\label{sec:tech}

\subsection{Proof of Lemma \ref{lem::hoeff}}

For the ease of notation, we define the unnomarlized version of $U_{n,q}$ and $U_{n,\balpha}$  with 
$$
D_{n,q}=\frac{n!}{(n-qr)!}U_{n,q};\quad D_{n,\balpha}=\frac{n!}{(n-c)!}U_{n,\balpha},
$$
where $c=\sum_t t\alpha_t$ is the order of $h_l^{\balpha}$.

For $I=(i_1,i_2,\ldots,i_{qr})\in P_{qr}(n)$, denote $I_c=(i_{(c-1)r+1},\ldots,i_{cr})\in P_r(n)$ for $c=1,\ldots,q$, so we may also write $I=(I_1,I_2,\ldots,I_q)$. We have
\begin{align*}
D_{n,q}=&\sum_{l\in\cL}\sum_{I\in P_{qr}(n)}(\otimes^q h_l)(X_I)\\
=&\sum_{l\in\cL}\sum_{I\in P_{qr}(n)}\prod_{c=1}^q h_l(X_{I_c})\\
=&\sum_{l\in\cL}\sum_{I\in P_{qr}(n)}\prod_{c=1}^q \left(\sum_{t=1}^r\sum_{J\subseteq I_c:|J|=t}h_l^{(t)}(X_J)\right)\\
=&\sum_{l\in\cL}\sum_{c=q}^{qr}\frac{(n-c)!}{(n-qr)!}\sum_{\balpha:\sum_t t\alpha_t=c}C_{\balpha}\sum_{I\in P_c(n)}h_l^{\balpha}(X_I)\\
=&\sum_{c=q}^{qr}\frac{(n-c)!}{(n-qr)!}\sum_{\balpha:\sum_t t\alpha_t=c} C_{\balpha}D_{n,\balpha},
\end{align*}
for some constants $C_{\balpha}$, 
where we have used the fact that
\begin{equation}
h_l(X_I)=\sum_{t=1}^r\sum_{J\subseteq I:|J|=t}h_l^{(t)}(X_J),
\label{eq:sumhajek}
\end{equation}
and combine the terms with the same kernel $h_l^{\balpha}$ (i.e. with the same $\balpha$) into their corresponding U-statistic. Therefore we conclude that
$$U_{n,q}=\sum_{c=q}^{qr}\sum_{\balpha:\sum_t t\alpha_t=c}C_{\balpha}U_{n,\balpha}.$$
Since the kernel $\tilde h_{l,\balpha}$ with order $\sum_t t\alpha_t$ less than $qs$ equals 0, we may also start the index in the summation from $c=qs$ instead of $c=q$.

Note that $C_{\balpha}$ can be calculated explicitly in general. We only derive it for $\balpha=e_t(r)$ required for the leading term. In \eqref{eq:sumhajek}, the decomposition of $h_l$ contains $\binom{r}{t}$ terms in $h_l^{(t)}$. Therefore, the total number of terms in $\otimes^q h_l^{(t)}$ equals $P^n_{n-qr}\binom{r}{t}^q$. Therefore, for $\balpha=e_t(r)$, we have $P^n_{n-qr}\binom{r}{t}^q=\frac{(n-c)!}{(n-qr)!}P^n_c C_{\balpha}$, by matching the number of terms $\otimes^q h_l^{(t)}$, which implies that $C_{\balpha}=\binom{r}{t}^q$.

\subsection{Proof of Lemma \ref{lem::lead_hoeff}}
We analyze the asymptotic variance of $U_{n,\balpha}$.

For $\balpha=(\alpha_1,\ldots,\alpha_r)$, we define $$\tilde\sigma_{\balpha}(l_1,l_2)=\E[\tilde h_{l_1}^{\balpha}(X_I)\tilde h_{l_2}^{\balpha}(X_I)],$$
and $\tsig_{\balpha}=\sum_{l_1,l_2}\tilde\sigma_{\balpha}(l_1,l_2)$. Note that by Cauchy–Schwarz inequality, we have 
\begin{align*}
\tsig_{\balpha}=\E\left[\left(\sum_{l\in\cL}\tilde h_l^{\balpha}(X_I)\right)^2\right]\le\E\left[\left(\sum_{l\in\cL} h_l^{\balpha}(X_I)\right)^2\right]=\sum_{l_1,l_2\in\cL}\prod_{t=s}^r\sigma_t^{\alpha_t}(l_1,l_2),
\end{align*}
as $\sum_{l\in\cL}\tilde h_l^{\balpha}$ is the symmetrized kernel of $\sum_{l\in\cL}h_l^{\balpha}$.
Moreover, applying H\"{o}lder's inequality, we have 
$$
\sum_{l_1,l_2\in\cL}\prod_{t=s}^r\sigma_t^{\alpha_t}(l_1,l_2)\le \prod_{t=s}^r\left(\sum_{l_1,l_2\in\cL}\sigma_t^{q}(l_1,l_2)\right)^{\alpha_t/q}=\prod_{t=s}^r\Sigma_t^{\alpha_t/q}(q).
$$
Recall that the Hoeffding decomposition can be written as
$$
U_{n,q}=\binom{r}{s}^q U_{n,q}^{(qs)}+\sum_{c=qs+1}^{qr}\sum_{\balpha:\sum_t t\alpha_t=c}C_{\balpha}U_{n,\balpha},
$$
Note that $\tilde h_l^{\balpha}$ is fully degenerate and orthogonal, which implies $\var(U_{n,\balpha})=O(n^{-c}\tsig_{\balpha})$ with $c=\sum_t\alpha_t$ and $\var(U_{n,q}^{(qs)})=O(n^{-qs}\tsig_s(q))$. Therefore, we have for any $\balpha$ such that $\sum_t t\alpha_t>qs$,
\begin{align*}
 &\var(U_{n,\balpha})=O(n^{-c}\tsig_{\balpha})\\
 =&O(n^{-c}\Sigma_{\balpha})=O\left(n^{-c}\prod_{t=s}^r\Sigma_t^{\alpha_t/q}(q)\right) \quad(\tsig_{\balpha}\le \Sigma_{\balpha}\text{ and H\"{o}lder})\\
 =&o\left(n^{-c}\prod_{t=s}^rn^{\alpha_t(t-s)}\Sigma_s^{\alpha_t/q}(q)\right)\\
 =&o(n^{-qs}\tsig_{s}(q))=o\big(\var(U_{n,q}^{(qs)})\big),    
\end{align*}
by Assumption \ref{ass::lead_hoeff}.
Therefore, we have all terms but $U_{n,q}^{(qs)}$ are negligible and complete the proof.

\subsection{Proof of Proposition \ref{thm::mainr_deg}}
The degeneracy of $h_l$ implies that $\otimes^q h_l$ is also fully degenerate, and we have $D_{n,q}=D_{n,q}^{(qr)}$. Note that with a slight abuse of notation, we define $D_{n,q}^{(qr)}=\binom{n}{qr}U_{n,q}^{(qr)}$ (instead of the coefficient equal to $\frac{n!}{(n-qr)!}$) in this section, so that
$$
D_{n,q}^{(qr)}=\sum_{l\in\cL}\sum_{1\le i_1<\cdots<i_{qr}\le n}H_{l,q}^{(r)}(X_{i_1},\ldots, X_{i_{qr}}).
$$
. By Wold's device, it suffices to show that for any $q\not=Q\in2\Z_+$ and $\alpha_1,\alpha_2\in\R$,
$$\alpha_1n^{-qr/2}\tsig^{-1/2}_r(q)D_{n,q}^{(qr)}+\alpha_2n^{-Qr/2}\tsig^{-1/2}_r(Q)D_{n,Q}^{(Qr)}\cod N(0,\alpha_1^2(qr)!+\alpha_2^2(Qr)!).$$

The degeneracy of kernel implies that LHS is a martingale, whose martingale difference is
\begin{align*}
 \xi_{n,i}=&\alpha_1 (qr)!n^{-qr/2}\tsig^{-1/2}_r(q)\sum_{l\in\cL}\sum_{1\le i_1<\cdots<i_{qr-1}< i}H_{l,q}^{(r)}(X_{i_1},\ldots, X_{i_{qr-1}},X_i)\\
&+\alpha_2(Qr)!n^{-Qr/2}\tsig^{-1/2}_r(Q)\sum_{l\in\cL}\sum_{1\le i_1<\cdots<i_{Qr-1}< i}H_{l,Q}^{(r)}(X_{i_1},\ldots, X_{i_{Qr-1}},X_i)\\
\deltaeq&\alpha_1(qr)!\xi_{n,i}^{(1)}+\alpha_2(Qr)!\xi_{n,i}^{(2)}.
\end{align*}
Note that $(qr)!$ and $(Qr)!$ in the first line appear due to the permutation across $qr$ and $Qr$ indices.
To apply the martingale CLT (Theorem 35.12 in Billingsley (2008)), we need to verify the following two conditions.\\
$\displaystyle(i)\quad\forall\varepsilon>0,\sum_{i=1}^n\mathbb{E}\left[{\xi}_{n,i}^{2} \textbf{1}\left\{\left|{\xi}_{n,i}\right|>\varepsilon\right\} \Big| \mathcal{F}_{i-1}\right] \stackrel{p}{\rightarrow} 0$.\\
$\displaystyle(ii)\quad V_n=\sum_{i=1}^{ n}\mathbb{E}\left[{\xi}_{n,i}^{2}|\F_{i-1}\right] \stackrel{p}{\rightarrow} \alpha_1^2(qr)!+\alpha_2^2(Qr)!$.

To prove (i), it suffices to show that
$$
\sum_{i=1}^n\E[\xi_{n,i}^{4}]\lesssim\sum_{i=1}^n\E[(\xi_{n,i}^{(1)})^{4}+(\xi_{n,i}^{(2)})^{4}]\rightarrow 0.
$$
Note that
\begin{align*}
 &\sum_{i=1}^n\E[(\xi_{n,i}^{(1)})^{4}]=n^{-2qr}\tsig_r^{-2}(q)\sum_{i=1}^n\E\left[\left(\sum_{1\le i_1<\cdots<i_{qr-1}< i}\sum_{l\in\cL}H_{l,q}^{(r)}(X_{i_1},\ldots, X_{i_{qr-1}},X_i)\right)^4\right]\\
 \lesssim&\sum_{i=1}^n n^{-2qr}\tsig_r^{-2}(q)n^{2(qr-1)}\E\left[\prod_{u=1}^4H_{l_u,q}^{(r)}(X_{i_1^{(u)}},\ldots,X_{i_{qr-1}^{(u)}},X_i)\right]\\
 \lesssim&n^{-1}\rightarrow 0,
\end{align*}
where the first inequality uses the fact that all subscripts have to appear at least twice to have nonzero expectation and the second inequality is due to Lemma \ref{lem::assb2}.  
Similar argument leads to $\sum_{i=1}^n\E[(\xi_{n,i}^{(2)})^{4}]\rightarrow 0.$

To prove (ii), we decompose the summation into two parts,
\begin{align*}
&\sum_{i=qr}^n\E[(\xi_{n,i}^{(1)})^2|\F_{i-1}]\\
=&n^{-qr}\widetilde\Sigma_r^{-1}(q)\sum_{l_1,l_2\in\cL}\sum_{i=qr}^{n}\sum_{i_t^{(1)}<i,i_t^{(2)}<i}\E\left[H_{q,l_1}^{(r)}(X_{i_1^{(1)}},\ldots,X_{i_{qr-1}^{(1)}},X')H_{q,l_2}^{(r)}(X_{i_1^{(2)}},\ldots,X_{i_{qr-1}^{(2)}},X_i)\mid \F_{i-1}\right]\\
=&n^{-qr}\widetilde\Sigma_r^{-1}(q)\sum_{l_1,l_2\in\cL}\sum_{i=qr}^{n}\Bigg(\sum_{i_t^{(1)}<i,i_t^{(2)}<i}^{(1)}\E\left[H_{q,l_1}^{(r)}(X_{i_1^{(1)}},\ldots,X_{i_{qr-1}^{(1)}},X')H_{q,l_2}^{(r)}(X_{i_1^{(2)}},\ldots,X_{i_{qr-1}^{(2)}},X')\mid \F_{i-1}\right]\\
&+\sum_{i_t^{(1)}<i,i_t^{(2)}<i}^{(2)}\E\left[H_{q,l_1}^{(r)}(X_{i_1^{(1)}},\ldots,X_{i_{qr-1}^{(1)}},X')H_{q,l_2}^{(r)}(X_{i_1^{(2)}},\ldots,X_{i_{qr-1}^{(2)}},X')\mid\F_{i-1}\right]\Bigg)\\
\deltaeq&V_n^{(1)}+V_n^{(2)},
\end{align*}
where $\sum^{(1)}$ to be over the indices where $i_t^{(1)}$ and $i_t^{(2)}$ can be all paired up to a permutation, for $t=1,2,\ldots,qr-1$, and $\sum^{(2)}$ includes all others, and we have replace $X_i$ by a dummy variable $X'$ since we are taking its expectation (conditioning on others). Note that we have abbreviate $i_1^{(u)}<i_2^{(u)}<\cdots<i_{qr-1}^{(u)}<i$ as $i_t^{(u)}<i$ in the above expression of summation for $u=1,2$, to ease the notation.

We first show $V_n^{(1)}\cop [(qr)!]^{-1}$. 
Since 
\begin{align*}
\E[V_n^{(1)}]=&n^{-qr}\widetilde\Sigma_r^{-1}(q)\sum_{l_1,l_2\in\cL}\sum_{1\le i_1<\cdots<i_{qr}\le n}\E\left[H_{q,l_1}^{(r)}(X_{i_1},\ldots,X_{i_{qr-1}},X')H_{q,l_2}^{(r)}(X_{i_1},\ldots,X_{i_{qr-1}},X')\right]\\
\rightarrow& [(qr)!]^{-1},
\end{align*}
it suffices to show
$$
\E\left[\left(V_n^{(1)}\right)^2\right]\rightarrow [(qr)!]^{-2},
$$
which implies convergence in $L^2$ and hence the convergence in probability. In fact,
we have
\begin{align*}
\E[(V_n^{(1)})^2]=&n^{-2qr}\widetilde\Sigma_r^{-2}(q)\sum_{l_1,l_2,l_3,l_4\in\cL}\sum_{i,j=qr}^{n}\sum_{i_t<i}\sum_{j_t<j}\E\left[H_{q,l_1}^{(r)}(X_{i_1},\ldots,X_{i_{qr-1}},X')H_{q,l_2}^{(r)}(X_{i_1},\ldots,X_{i_{qr-1}},X')\right.\\
&~~~~~~~\left.H_{q,l_3}^{(r)}(X_{j_1},\ldots,X_{j_{qr-1}},X'')H_{q,l_4}^{(r)}(X_{j_1},\ldots,X_{j_{qr-1}},X'')\right].
\end{align*}
We decompose the summation into $\sum^{(a)}$ and $\sum^{(b)}$, where $\sum^{(a)}$ includes the term such that $\{i_t\}_{t=1}^{qr-1}$ and $\{j_t\}_{t=1}^{qr-1}$ have no intersection, and $\sum^{(b)}$ includes all others.
Note that
\begin{align*}
&n^{-2qr}\tsig_r^{-2}(q)\sum_{l_1,l_2,l_3,l_4\in\cL}\sum_{i,j=qr}^{n}\sum_{i_t<i,j_t<j}^{(a)}\E\left[H_{q,l_1}^{(r)}(X_{i_1},\ldots,X_{i_{qr-1}},X')H_{q,l_2}^{(r)}(X_{i_1},\ldots,X_{i_{qr-1}},X')\right.\\
&~~~~~~~~\left.H_{q,l_3}^{(r)}(X_{j_1},\ldots,X_{j_{qr-1}},X'')H_{q,l_4}^{(r)}(X_{j_1},\ldots,X_{j_{qr-1}},X'')\right]\\
\asymp &n^{-2qr}\tsig_r^{-2}(q)\sum_{i,j=qr}^{n}\binom{i}{qr-1}\binom{j}{qr-1}\sum_{l_1,l_2,l_3,l_4\in\cL}\E\big[H_{q,l_1}^{(r)}(X_{[qr]})H_{q,l_2}^{(r)}(X_{[qr]})\big]\E\big[H_{q,l_3}^{(r)}(X_{[qr]})H_{q,l_4}^{(r)}(X_{[qr]})\big]\\
\asymp&n^{-2qr}\tsig_r^{-2}(q)[(qr)!]^{-2}n^{2qr}\tsig_r^2(q)=[(qr)!]^{-2}.
\end{align*}
For $\sum^{(b)}$, we lose at least one degree of freedom due to the matching indices between $\{i_t\}$ and $\{j_t\}$. Therefore, by Lemma \ref{lem::assb2} (i), we have
\begin{align*}
&n^{-2qr}\widetilde\Sigma_r^{-2}(q)\sum_{l_1,l_2,l_3,l_4\in\cL}\sum_{i,j=qr}^{n}\sum_{i_t<i,j_t<j}^{(b)}\E\left[H_{q,l_1}^{(r)}(X_{i_1},\ldots,X_{i_{qr-1}},X')H_{q,l_2}^{(r)}(X_{i_1},\ldots,X_{i_{qr-1}},X')\right.\\
&~~~~~~~\left.H_{q,l_3}^{(r)}(X_{j_1},\ldots,X_{j_{qr-1}},X'')H_{q,l_4}^{(r)}(X_{j_1},\ldots,X_{j_{qr-1}},X'')\right]\\
=&O(\frac{1}{n})\tsig_r^{-2}(q)\cdot O(\tsig_r^2(q))\rightarrow 0..
\end{align*}
Therefore, we conclude $V_n^{(1)}$ converges to $[(qr)!]^{-2}$ in $L^2$.

For $V_n^{(2)}$, it suffices to show $\E[(V_n^{(2)})^2]\rightarrow 0$ to conclude its negligibly. Note that the summand in $V_n^{(2)}$ is nonzero only if any index in $\{i_t^{(u)},j_t^{(u)}\}$ appears at least twice. The summation over terms that has some index to appear more than twice is $O(1/n)$ due to the additional loss of order in $n$.
Therefore, we focus on terms whose indices all appear exacting twice, and the number of summands also has order $O(n^{2qr})$, similar to $\sum^{(a)}$ in the analysis of $V_n^{(1)}$. By Lemma \ref{lem::assb2}, we have
$$
V_n^{(2)}=n^{-2qr}\tsig_r^{-2}(q)\cdot O(n^{2qr})\cdot o(\tsig_r^2(q))\rightarrow 0.
$$
which implies $V_n^{(2)}$ converges to 0 in $L^2$.
Therefore, we conclude $\sum_{i=qr}^n\E[(\xi_{n,i}^{(1)})^2|\F_{i-1}]\cop[(qr)!]^{-2}$. Similarly, we have $\sum_{i=Qr}^n\E[(\xi_{n,i}^{(2)})^2|\F_{i-1}]\cop[(Qr)!]^{-2}$, and $\sum_{i=qr}^n\E[\xi_{n,i}^{(1)}\xi_{n,i}^{(2)}|\F_{i-1}]\cop 0$.
By putting all pieces together, we complete the proof.

\begin{lemma}
Given four groups of indices that are distinct within group. $\{i_1^{(u)},\ldots,i_{qr}^{(u)}\},\{j_1^{(u)},\ldots,j_{qr}^{(u)}\}$ for $u=1,2$.\\
(i) Under Assumption \ref{ass::asy_deg} (b1), we have
\begin{align*}
&\sum_{l_1,l_2,l_3,l_4\in\cL}\E\left[H_{q,l_1}^{(r)}(X_{i_1^{(1)}},\ldots,X_{i_{qr}^{(1)}})H_{q,l_2}^{(r)}(X_{i_1^{(2)}},\ldots,X_{i_{qr}^{(2)}})\right.\\
&~~~~\left.H_{q,l_3}^{(r)}(X_{j_1^{(1)}},\ldots,X_{j_{qr}^{(1)}})H_{q,l_4}^{(r)}(X_{j_1^{(2)}},\ldots,X_{j_{qr}^{(2)}})\right]\\
=&O(\tsig_r^2(q)).
\end{align*}
(ii) Moreover, suppose $\{i_1^{(1)},\ldots,i_{qr-1}^{(1)}\}$ are not identical to $\{i_1^{(2)},\ldots,i_{qr-1}^{(2)}\}$, and each element in $\{i_t^{(1)},i_t^{(2)},j_t^{(1)},j_t^{(2)}\}_{t=1}^{qr-1}$ appears exactly twice. Under Assumption \ref{ass::asy_deg} (a) and (b2), we have
\begin{align*}
&\sum_{l_1,l_2,l_3,l_4\in\cL}\E\left[H_{q,l_1}^{(r)}(X_{i_1^{(1)}},\ldots,X_{i_{qr-1}^{(1)}},X')H_{q,l_2}^{(r)}(X_{i_1^{(2)}},\ldots,X_{i_{qr-1}^{(2)}},X')\right.\\
&~~~~\left.H_{q,l_3}^{(r)}(X_{j_1^{(1)}},\ldots,X_{j_{qr-1}^{(1)}},X'')H_{q,l_4}^{(r)}(X_{j_1^{(2)}},\ldots,X_{j_{qr-1}^{(2)}},X'')\right]\\
=&o(\tsig_r^2(q)).
\end{align*}
\label{lem::assb2}
\end{lemma}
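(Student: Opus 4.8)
The plan is to treat both parts with a single argument: expanding the symmetrizations, applying the moment-to-cumulant formula, pruning the resulting sum using the full degeneracy of $h_l$, and then matching the surviving terms to Assumptions~\ref{ass::asy_deg}(a), (b1), (b2). (Recall that, by the reduction preceding the lemma, we may assume $h_l$ is fully degenerate, so $H_{q,l}^{(r)}$ is the symmetrization of $\otimes^q h_l$.) First I would write each expectation in the statement as a weighted average (nonnegative weights summing to $1$) over finitely many ``configurations'' — a number depending only on $q,r$ — of terms $\E\big[\prod_{u=1}^4\prod_{c=1}^q h_{l_u}(X_{J_{u,c}})\big]$, where for each $u$ the $r$-sets $\{J_{u,c}\}_{c=1}^q$ partition the $u$-th index group; then, for each configuration, the moment-to-cumulant identity writes the expectation as $\sum_{\pi}\prod_{B\in\pi}\cum\big((h_{l_u}(X_{J_{u,c}}))_{(u,c)\in B}\big)$, summed over all set partitions $\pi$ of the $4q$ factors. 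Full degeneracy kills most terms: (P1) if some underlying variable lies in the argument list of only one factor then the whole expectation is $0$ (condition on everything else and use $h_{l,t}\equiv0$ for $t<r$); (P2) a block $B$ contributes $0$ unless it is connected, i.e.\ its factors cannot be split into two subfamilies supported on disjoint sets of the $X_i$'s. Since the $q$ factors of one $g_u:=H_{q,l_u}^{(r)}(\cdot)$ have pairwise disjoint supports and mean zero, no surviving block lies inside a single $g_u$, so the graph on $\{1,2,3,4\}$ that joins $u\sim u'$ when some block meets both $g_u,g_{u'}$ has no isolated vertex; its connected components are therefore either $\{1,2,3,4\}$ or a $2+2$ pairing.

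For part (i): in a pairing, say $\{1,2\}|\{3,4\}$, the $g_1,g_2$-factors are supported on variables disjoint from those of $g_3,g_4$, so the expectation factors and $\sum_{l_1,\dots,l_4}|\cdot|$ splits into a product of a sum over $(l_1,l_2)$ and a sum over $(l_3,l_4)$. Each such factor is, in turn, a sum over the internal (connected) partitions of $2q$ factors coming from two kernels; the leading partition pairs each $h_{l_1}$-factor with an $h_{l_2}$-factor on a common $r$-set and produces $\sigma_r(l_1,l_2)^q$, and after Cauchy--Schwarz/H\"older in $(l_1,l_2)$ every internal partition is dominated by one of the sums in Assumption~\ref{ass::asy_deg}(b1) with $u=2$ or $3$, hence by $O(\tsig_r(q))$; multiplying, the pairing contributes $O(\tsig_r^2(q))$. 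In the fully connected case I would bound the sum over $l_1,\dots,l_4$ directly via Assumption~\ref{ass::asy_deg}(b1) with $u=4$, handling ``$4$-cycle'' products $[\sigma_r(l_1,l_2)\sigma_r(l_2,l_3)\sigma_r(l_3,l_4)\sigma_r(l_4,l_1)]^{q/2}$ by a Cauchy--Schwarz step that dominates them by $\Sigma_r^2(q)$. Since $\Sigma_r(q)=O(\tsig_r(q))$ and there are only $O(1)$ configurations, summing gives (i).

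For part (ii) the extra hypotheses — every underlying index appearing exactly twice and $\{i_t^{(1)}\}_t\ne\{i_t^{(2)}\}_t$ — are used to remove the leading order. A self-contained $\{1,2\}$-component would force every variable of group $1$ to reappear in group $2$ (its only other occurrence) and vice versa, so groups $1$ and $2$ would coincide, contradicting the hypothesis; the pairings $\{1,3\}|\{2,4\}$ and $\{1,4\}|\{2,3\}$ are excluded the same way, since $X'\ne X''$ would then force two of the four index groups to coincide. Hence only fully connected configurations survive, and I would match each to a non-dominance assumption: a configuration whose ``overlap multigraph'' on the four groups (one edge per underlying variable, joining the two groups containing it) is a $4$-cycle with every edge of multiplicity $qr/2$ contributes, up to constants, exactly the sum bounded in Assumption~\ref{ass::asy_deg}(a); every other connected configuration reduces, after H\"older in $(l_1,\dots,l_4)$ and grouping the blocks into at most $q$ ``layers'', to the left-hand side of Assumption~\ref{ass::asy_deg}(b2) for some $u\in\{3,4\}$ and $1\le c\le q$. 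Each of these is $o(\tsig_r^2(q))$, and there are $O(1)$ of them, which proves (ii).

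The hard part will be the combinatorics of the two middle paragraphs: establishing rigorously that every surviving moment-cumulant configuration factors into the ``parallel-layer'' form, so that its sum over $(l_1,\dots,l_4)$ is genuinely controlled by one of the three structured sums in Assumption~\ref{ass::asy_deg}; keeping track of the combinatorial weights throughout; and, in part (ii), enumerating the overlap multigraphs consistent with ``each index exactly twice and groups $1,2$ distinct'' and checking case by case that each falls under (a) or (b2).
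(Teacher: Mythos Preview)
Your treatment of part~(ii) is broadly aligned with the paper's, but your plan for part~(i) has a genuine gap, and the paper's route is both simpler and avoids it. After your moment-to-cumulant expansion, the blocks $B$ are cumulants of the form $\cum\big(h_{l_u}(X_{J_{u,c}}):(u,c)\in B\big)$ in which a given label $l_u$ may recur and $|B|$ may range from $2$ to $4q$. Assumption~\ref{ass::asy_deg}(b1), however, controls only the $q$-th \emph{power} of a \emph{single} joint cumulant of $u\le 4$ factors with \emph{distinct} labels; it says nothing directly about, e.g., $\sum_{l_1,l_2}\big|\cum(h_{l_1}(X_{J_1}),h_{l_1}(X_{J_2}),h_{l_2}(X_{J_3}),h_{l_2}(X_{J_4}))\big|$ for four overlapping but unequal $r$-sets, which already arises for $q=r=2$ whenever the two copies in a $\{1,2\}$-pairing use different partitions of the common index set. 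Restricting the (b1) sum to the diagonal $l_3=l_1,\,l_4=l_2$ only yields control of $\sum|\cum|^q$, and passing to $\sum|\cum|$ by Cauchy--Schwarz costs a factor of $|\cL|$. The paper sidesteps all of this: it applies H\"older \emph{before} any expansion to bound the four-fold product by $\E\big[(\sum_l H_{q,l}^{(r)})^4\big]$, then uses Jensen/AM--GM on the symmetrization to pass to $\E\big[(\sum_l(\otimes^q h_l)(X_{[qr]}))^4\big]$, where all four copies now share the \emph{same} partition $\{[(c-1)r+1,cr]\}_{c=1}^q$ and the expectation factors as $\E^q[h_{l_1}h_{l_2}h_{l_3}h_{l_4}]$; a single cumulant expansion of this four-point moment then matches the $\cum^q$ shape of (b1) exactly.

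For part~(ii) your outline is close to the paper's, but your case split needs adjustment. The paper's dichotomy is on whether the $r$-sets $\{I_{uc}\}$ are all pairwise matched: if they are, the expectation is a product of $\sigma_r(l_u,l_{u'})$'s and, because groups $1,2$ differ while sharing $X'$, at least one cross-pair between $\{1,2\}$ and $\{3,4\}$ is forced, so H\"older reduces the $l$-sum to the $4$-cycle sum in Assumption~\ref{ass::asy_deg}(a); if they are not all matched, the cumulant decomposition produces a block meeting at least three of the four groups, and this is exactly what (b2) controls. Your split---``symmetric $4$-cycle $\Rightarrow$ (a), every other connected configuration $\Rightarrow$ (b2)''---misclassifies the many ``all paired'' configurations whose overlap multigraph is connected but not the symmetric $4$-cycle: these still reduce to (a) via H\"older, not to (b2).
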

\begin{proof}[Proof of Lemma \ref{lem::assb2}]
For (i), by AM-GM inequality, since $H_{q,l}^{(r)}$ is the symmetrized kernel of $\otimes^q h_l^{(r)}$, we have
\begin{align*}
&\sum_{l_1,l_2,l_3,l_4\in\cL}\E\left[H_{q,l_1}^{(r)}(X_{i_1^{(1)}},\ldots,X_{i_{qr}^{(1)}})H_{q,l_2}^{(r)}(X_{i_1^{(2)}},\ldots,X_{i_{qr}^{(2)}})\right.\\
&~~~~\left.H_{q,l_3}^{(r)}(X_{j_1^{(1)}},\ldots,X_{j_{qr}^{(1)}})H_{q,l_4}^{(r)}(X_{j_1^{(2)}},\ldots,X_{j_{qr}^{(2)}})\right]\\
= &\E\left[\left(\sum_{l\in\cL} H_{q,l}^{(r)}(X_{i_1^{(1)}},\ldots,X_{i_{qr}^{(1)}})\right)\left(\sum_{l\in\cL}H_{q,l}^{(r)}(X_{i_1^{(2)}},\ldots,X_{i_{qr}^{(2)}})\right)\right.\\
&~~~~\left.\left(\sum_{l\in\cL}H_{q,l}^{(r)}(X_{j_1^{(1)}},\ldots,X_{j_{qr}^{(1)}})\right)\left(\sum_{l\in\cL}H_{q,l}^{(r)}(X_{j_1^{(2)}},\ldots,X_{j_{qr}^{(2)}})\right)\right]\\
\le & \E\left[\left(\sum_{l\in\cL} H_{q,l}^{(r)}(X_{i_1^{(1)}},\ldots,X_{i_{qr}^{(1)}})\right)^4\right]\quad\text{(H\"{o}lder's inequality)}\\
\le & \E\left[\left(\sum_{l\in\cL} \otimes^q h_l^{(r)}(X_1,\ldots,X_{qr})\right)^4\right]\quad\text{(AM-GM inequality, as $H_{q,l}^{(r)}$ is an average of $\otimes^qh_l$)}\\
= &\sum_{l_1,\ldots,l_4 \in\cL}\E[h_{l_1}(X_1,\ldots,X_r)\cdots h_{l_1}(X_{(q-1)r+1},\ldots,X_{qr})\cdots \\
& h_{l_4}(X_1,\ldots,X_r)\cdots h_{l_4}(X_{(q-1)r+1},\ldots,X_{qr})]
\end{align*}

The remaining proof of (i) is identical to Lemma 6.1 in \cite{zhang2021adaptive}. We include a self-contained proof here. We show a slightly stronger result that matches \cite{zhang2021adaptive} as follows.

Claim: Under Assumption \ref{ass::asy_deg} (b1), for any $i_1^{(u)},i_2^{(u)},...,  i_{qr}^{(u)}$ that are all distinct with  $u = 1,2,3,4$, we have
\begin{equation}
\begin{aligned}
\Big|\sum_{l_1,\ldots,l_4 \in\cL}\E[&h_{l_1}(X_{i_1^{(1)}},\ldots,X_{i_r^{(1)}})\cdots h_{l_1}(X_{i_{(q-1)r+1}^{(1)}},\ldots,X_{i_{qr}^{(1)}})\cdots \\
& h_{l_4}(X_{i_1^{(4)}},\ldots,X_{i_r^{(4)}})\cdots h_{l_4}(X_{i_{(q-1)r+1}^{(4)}},\ldots,X_{i_{qr}^{(4)}})]\Big| \lesssim \tsig_r^{2}(q).    
\end{aligned}
\label{lem_cum_more}
\end{equation}
Proof of Claim: Define $I_{t}^{(u)}=(i_{(r-1)t+1}^{(u)},\ldots,i_{(rt)}^{(u)})$.
By H\"older's Inequality, we have
\begin{align*}
&\left|\sum_{l_1,...,l_4\in\cL}\E[h_{l_1}(X_{I_1^{(1)}})\cdots h_{l_1}(X_{I_q^{(1)}}))\cdots h_{l_4}(X_{I_1^{(4)}})\cdots h_{l_4}(X_{I_q^{(4)}})]\right| \\
= &\left|\E\left[\prod_{u=1}^4\sum_{l_u\in\cL}[h_{l_u}(X_{I_1^{(u)}})\cdots h_{l_u}(X_{I_q^{(u)}})\right]\right|\\ 
\le &\prod_{u=1}^4\left\{\E\left[\left(\sum_{l_u\in\cL}[h_{l_u}(X_{I_1^{(u)}})\cdots h_{l_u}(X_{I_q^{(u)}})\right)^4\right]\right\}^{1/4}.
\end{align*}
Note that
\begin{align*}
&\E\left[\left(\sum_{l\in\cL}[h_{l}(X_1,\ldots,X_r)\cdots h_{l}(X_{(q-1)r+1},X_{qr})\right)^4\right]\\
=&\sum_{l_1,...,l_4\in\cL}\E^q[h_{l_1}(X_1,\ldots,X_r)\cdots h_{l_4}(X_1,\ldots,X_r)]\\
\lesssim&\sum_{l_1,...,l_4\in\cL}\E^q[h_{l_1}(X_1,\ldots,X_r)h_{l_2}(X_1,\ldots,X_r)]\E^q[h_{l_3}(X_1,\ldots,X_r)h_{l_4}(X_1,\ldots,X_r)]\\
&+\sum_{l_1,...,l_4\in\cL}|\cum^q(h_{l_1}(X_1,\ldots,X_r),\ldots,h_{l_4}(X_1,\ldots,X_r))|\\ 
\lesssim&\Sigma_r^2(q)\asymp\tsig_r^2(q).
\end{align*}
where the second last line in the above inequalities is due to the CR inequality and the definition of joint cumulants. We therefore conclude (i).

For (ii), to ease the notation, we write LHS as 
\begin{align*}
&\sum_{l_1,l_2,l_3,l_4\in\cL}\E\left[H_{q,l_1}^{(r)}(X_{i_1},\ldots,X_{i_{qr}})H_{q,l_2}^{(r)}(X_{i_{qr+1}},\ldots,X_{i_{2qr}})\right.\\
&~~~~\left.H_{q,l_3}^{(r)}(X_{i_{2qr+1}},\ldots,X_{i_{3qr}})H_{q,l_4}^{(r)}(X_{i_{3qr+1}},\ldots,X_{i_{4q}})\right],  
\end{align*}
with the constraint that $i_{qr}=i_{2qr}$ and $i_{3qr}=i_{4qr}$.
Since $H_{q,l}^{(r)}$ is the average of $\otimes^q h_l^{(r)}$ over different arguments, we may decompose the expectation in LHS as a linear combination of finite many terms with form
\begin{align*}
 [\otimes^q h_{l_1}^{(r)}(X_{I_1})][\otimes^q h_{l_2}^{(r)}(X_{I_2})][\otimes^q h_{l_3}^{(r)}(X_{I_3})][\otimes^q h_{l_4}^{(r)}(X_{I_4})],
\end{align*}
where $I_1,I_2,I_3,I_4$ are permutations of $\{i_t\}_{t=1}^{qr},\{i_t\}_{t=qr+1}^{2qr},\{i_t\}_{t=2qr+1}^{3qr},\{i_t\}_{t=3qr+1}^{4qr}$, respectively. We further write $I_u=(I_{u1},\ldots,I_{uq})$ where $I_{uc}$ is an index set containing $r$ elements, for $u=1,\ldots,4;\,c=1,\ldots,q$. Since each element in $\{I_1,\ldots,I_4\}$ appears exactly twice, we consider the following two cases.
\begin{itemize}
    \item The index set $I_{uc}$ pairs with each other. In this case, the constraint that $I_1\not=I_2$ and $i_{qr}=i_{2qr}, i_{3qr}=i_{4qr}$ implies that there exist $c_1,c_2,c_3,c_4$ such that $I_{1,c_1}=I_{3,c_3},I_{1,c_1}=I_{4,c_4}$, or $I_{1,c_1}=I_{4,c_4},I_{2,c_2}=I_{3,c_3}$. Moreover, the assumption on the indices implies that $I_{1q}=I_{2q}$ and $I_{3q}=I_{4q}$. Assumption \ref{ass::asy_deg} (a) implies that the term is $o_p(\tsig_r^2(q))$ by H\"older's inequality. The arguments are essentially the same as the proof of Lemma 6.3 in the supplemental material of \cite{zhang2021adaptive}. Therefore we only sketch the outline of the proof here for illustration, and we refer the readers to the supplemental material of \cite{zhang2021adaptive} for a complete proof.
    \item Suppose the index sets $\{I_{uc}\}$ are not all paired with each other. We then decompose the expectation using the cumulants, and write it as the summation over the products of cumulants. We use the cumulant decomposition to write the term as many products of cumulants and expectation, with form
    \begin{equation}
        \sum_{l_1,l_2,l_3,l_4\in\cL}\prod_{k=1}^K\cum(h_{u_1^{(k)}}(I_{u_1^{(k)},c_1^{(k)}}),\ldots,h_{u_{m_k}^{(k)}}(X_{I_{u_{m_k}^{(k)},c_{m_k}^{(k)}}})),
        \label{eq:B1iicumprod}
    \end{equation}
    where $(B_1,B_2,\ldots,B_K)$ is a partition of $(I_1,\ldots,I_4)$ with $B_k=(I_{u_1^{(k)},c_1^{(k)}},\ldots,I_{u_{m_k}^{(k)},c_{m_k}^{(k)}})$, we have assumed that the blocks are minimal in the sense that $B_1,\ldots,B_K$ have no overlap with each other. Otherwise, we may further decompose some blocks into the product of smaller blocks due to the independence. Since $I_{uc}$ are not all paired, there exist at least one block $B_{k_0}$ that contains $I_{uc}$ for at least three different $u$ in \{1,2,3,4\}, which we denote as
    \begin{equation}
           \cum^{q/m_k}(h_{l_1}(X_{i_1},\ldots,X_{i_r}),\ldots,h_{l_u}(X_{i_{(c-1)r+1}},\ldots,X_{i_{cr}}),
    \label{eq:B1iicum} 
    \end{equation}
    with $u=3$ or 4 and $c=m_k$, for simplicity.
    Using H\"older's inequality, we can bound (the absolute value of) \eqref{eq:B1iicumprod} by
    $$
     \prod_{k=1}^K\left(\sum_{l_1,l_2,l_3,l_4\in\cL}\cum^{q/m_k}(h_{u_1^{(k)}}(I_{u_1^{(k)},c_1^{(k)}}),\ldots,h_{u_{m_k}^{(k)}}(X_{I_{u_{m_k}^{(k)},c_{m_k}^{(k)}}}))\right)^{m_k/q}.
    $$
    Each term is $O(\tsig_r^{m_k/2q})$, so the product is $O(\tsig_r^{2})$ as $\sum_{k=1}^Km_k=4q$. However, note that the term in RHS associated with \eqref{eq:B1iicum}, i.e.
    $$\cum^{q/c}(h_{l_1}(X_{i_1},\ldots,X_{i_r}),\ldots,h_{l_u}(X_{i_{(c-1)r+1}},\ldots,X_{i_{cr}}),$$
    which is $o(\tsig_r^{c/2q}(q))$ by Assumption \ref{ass::asy_deg} (b2),
  This implies \eqref{eq:B1iicumprod} is $o_p(\tsig_r^2(q))$ and completes the proof.  
    
\end{itemize}

\end{proof}

\begin{remark}
Note that we have to make a stronger assumption (b2) in addition to (b1), due to the structure for kernels with order $r>2$. In fact, for $r=1$, Lemma \ref{lem::assb2} (ii) is implied by (b1), which implies the martingale CLT.
However, for general $r$, a direct extended assumption like (b1) does not imply the lemma trivially, and we have to impose a stronger assumption (b2).
\end{remark}

\subsection{Proof of Theorem \ref{thm::power}}
To simplify the notations, we define $$U_{n,q,l}=(P^n_{qr})^{-1}\sum_{1\le i_1,i_2,\cdots,i_{qr}\le n}(\otimes^q h_l)(X_1,\ldots,X_{qr}),$$ 
so that $\displaystyle U_{n,q}=\sum_{l\in\cL}U_{n,q,l}$, and
$$
U_{n,\balpha,l}=(P^n_{m})^{-1}\sum_{1\le i_1,i_2,\cdots,i_m\le n}h_l^{\balpha}(X_1,\ldots,X_m), 
$$
where $m$ is the order of $h_l^{\balpha}$, i.e., $m=\sum_t t\alpha_t$, so that
$\displaystyle U_{n,\balpha}=\sum_{l\in\cL}U_{n,\balpha,l}$.

Recall $\tilde U_{n,q}$ is the proposed U-statistic associated with the centered kernel $\tilde h_l$. We also define the counterpart of $U_{n,q,l}$ and $U_{n,\balpha,l}$ associated with $\tilde h_l$ as $\tilde U_{n,q,l}$ and $\tilde U_{n,\balpha,l}$.

Note that 
\begin{align*}
U_{n,q}=&\|\Theta\|_q^q+\tilde U_{n,q}+\sum_{c=1}^{q-1}\binom{q}{c}\sum_{l\in\cL} \theta_l^{q-c} \tilde U_{n,c,l}.\\
\end{align*}
Note that $\tilde U_{n,c,l}$ is the U-statistic associated with $\otimes^c \tilde h_l$. We may apply the Hoeffding decomposition in Lemma \ref{lem::hoeff} to decompose $\tilde U_{n,c,l}$ into $\tilde U_{n,\balpha,l}$ with all valid $\balpha$ such that $\sum_{t=1}^r\alpha_t=c$. In particular, we have $\alpha_t=0$ for any $t<s$ as $h_l$ has order of degeneracy $s$.

By calculating the variance along with apply the H\"older's inequality, we have for any $1 \le c \le q-1$,
\begin{equation}
\begin{aligned}
 &\var\left(\sum_{l\in\cL} \theta_l^{q-c} \tilde U_{n,\balpha,l}\right)\\
\lesssim& \sum_{l_1,l_2\in\cL}\theta_{l_1}^{q-c}\theta_{l_2}^{q-c}n^{-\sum_{t=s}^r t\alpha_t}\cov(\tilde h_{l_1}^{\balpha},\,\tilde h_{l_2}^{\balpha})\\
\lesssim& n^{-\sum_{t=s}^r t\alpha_t}\var\left(\sum_{l\in\cL}\theta_l^{q-c}\tilde h_l^{\balpha}\right)\\
\le& n^{-\sum_{t=s}^r t\alpha_t}\var\left(\sum_{l\in\cL}\theta_l^{q-c} h_l^{\balpha}\right) \quad\text{(AM-GM inequality)}\\
=&n^{-\sum_{t=s}^r t\alpha_t}\sum_{l_1,l_2\in\cL}\theta_{l_1}^{q-c}\theta_{l_2}^{q-c}\prod_{t=s}^r\sigma_t^{\alpha_t}(l_1,l_2) \\
\le & n^{-\sum_{t=s}^r t\alpha_t}\left(\sum_{l_1,l_2\in\cL}\theta_{l_1}^q\theta_{l_2}^q\right)^{(q-c)/q}\prod_{t=s}^r\left(\sum_{l_1,l_2\in\cL}\sigma_t^q(l_1,l_2)\right)^{\alpha_t/q} \quad (\text{H\"{o}lder})\\
 =&n^{-\sum_{t=s}^r t\alpha_t}\|\Theta\|_q^{2(q-c)}\prod_{t=s}^r\Sigma_t^{\alpha_t/q}(q).
\end{aligned}
\label{eq::var_cross}
\end{equation}
where the first H\"older's inequality is applied in the same manner as the proof of Lemma \ref{lem::hoeff}.
if $\alpha_s<c$ so that $\alpha_t>0$ for some $t>s$, Assumption \ref{ass::lead_hoeff} implies that RHS is
$$
n^{-\sum_{t=s}^r t\alpha_t}\|\Theta\|_q^{2(q-c)}\prod_{t=s}^ro\big([n^{q(t-s)}\Sigma_s(q)]^{\alpha_t/q}\big)=o\big(n^{-cs}\Sigma_s^{c/q}(q)\|\Theta\|_q^{2(q-c)}\big).
$$
On the other hand, if $\alpha_s=c$ so that it is the only non-zero component in $\balpha$, we have RHS equals
$n^{-cs}\|\Theta\|_q^{2(q-c)}\Sigma_s^{c/q}(q)$.

In summary, we always have (recall that $\Sigma_s(q)$ and $\tsig_s(q)$ have the same order)
$$\var\left(\sum_{l\in\cL} \theta_l^{q-c} \tilde U_{n,\balpha,l}\right)\lesssim \left(\|\Theta\|_q^{2q}\right)^{(q-c)/q}\left(n^{-qs}\tsig_s(q)\right)^{c/q}.$$
which is an geometric average of $\|\Theta\|_q^{2q}$ and $n^{-qs}\tsig_s(q)$. Therefore, if $\gamma_{n,q}\rightarrow 0$, $\var^{1/2}(\tilde U_{n,q})\propto n^{-qs/2}\tsig_s^{1/2}(q)$ dominates $\|\Theta\|_q^q$ and all $\sum_{l\in\cL} \theta_l^{q-c} \tilde U_{n,\balpha,l}$, which implies $U_{n,q}$ has the same asymptotic distribution as $\tilde U_{n,q}$.

On the other hand, if $\gamma_{n,q}\rightarrow \infty$, $\|\Theta\|_q^q$ is the leading term of $U_{n,q}$ which diverges to $\infty$.

As regards the local alternative where $\gamma_{n,q}\rightarrow \gamma\in (0,\infty)$, it suffices to show the cross-product terms are still dominated by $\|\Theta\|_q^q$ and $\tilde U_{n,q}$, which have the same order in this case, under Assumption \ref{ass::power_cross}. Note that we have proved that if  $\alpha_t>0$ for some $t>s$, then 
$$
\var\left(\sum_{l\in\cL} \theta_l^{q-c} \tilde U_{n,\balpha,l}\right)=o\big(n^{-cs}\tsig_s^{c/q}(q)\|\Theta\|_q^{2(q-c)}\big)=o\big(\var(\tilde U_{n,q})\big).
$$
Therefore, it remains to show that under Assumption \ref{ass::power_cross}, the result also holds for $\balpha=ce_s(r)$. In this case, we have shown in the third line of (\ref{eq::var_cross}) that
\begin{align*}
\var\left(\sum_{l\in\cL} \theta_l^{q-c} \tilde U_{n,\balpha,l}\right)
\lesssim n^{-cs}\sum_{l_1,l_2\in\cL}\theta_{l_1}^{q-c}\theta_{l_2}^{q-c}\tsig_s^c(l_1,l_2),
\end{align*}
which is $o\big(n^{-qs}\tsig_s(q)\big)$ by Assumption \ref{ass::power_cross}. Therefore, we have the desired asymptotic result under the local alternatives.

\subsection{Proof of Proposition \ref{prop::optim_q}}
Define $f(q)=[(qs)!]^{1/2q}(\sqrt{N}R/d)^{1/q}$, which removes all the terms irrelevant to $q$ in $\delta(q)$.

Suppose $d\ge\sqrt{N}R$. Then $\sqrt{N}R/d$ is an increasing function in $q$. As $[(qs)!]^{1/2q}$ is also increasing in $q$, we have $f(q)$ achieves its minimum at $q=2$ for all $q \in 2\mathbb{Z_+}$.

When $d<\sqrt{N}R$, we follow the idea as the proof of Proposition 2.3 in \cite{he2021asymptotically}. Define $D=\sqrt{N}R/d>1$ to simplify the notations.
Note that we have
\begin{align*}
    \frac{f(q+1)}{f(q)}=&\frac{\{[(q+1)s]!\}^{1/(2q+1)}D^{1/(q+1)}}{[(qs)!]^{1/2q}D^{1/q}}\\
    =&\left(\frac{\left(P^{(q+1)s}_s\right)^q}{(qs)!D^2}\right)^{\frac{1}{2q(q+1)}}.
\end{align*}
As a result, we have $f(q+1)>f(q)$ if and only if $\displaystyle g(q)\deltaeq \left(P^{(q+1)s}_s\right)^q/(qs)!>D^2$. It is not difficult to see that $g(q)$ is increasing in $q$, as
$$
\frac{g(q+1)}{g(q)}= \left(\frac{P^{(q+2)s}_s}{P^{(q+1)s}_s}\right)^{q+1}>1.
$$
This implies that if $f(q+1)\ge f(q)$ then we must have $f(q+2)>f(q+1)$, and the optimal $q$ is the smallest integer $q_0$ such that $g(q_0)\ge D^2$ by noting that $g(q)<D^2$ for any $q<q_0$ implies $f(q)$ is decreasing for $q<q_0$, and $f(q)$ is strictly increasing for $q\ge q_0$. Since we have shown $g(q)$ is strictly increasing, we conclude that $q_0$ is increasing in $D$.

\end{document}